\newcommand{\F}{\mathbb F}
\newcommand{\Q}{\mathbb Q}
\newcommand{\Z}{\mathbb Z}
\newcommand{\R}{\mathbb R}
\newcommand{\CC}{\mathbb C}
\newcommand{\fm}{\mathfrak m}
\newcommand{\fp}{\mathfrak p}
\newcommand{\fP}{\mathfrak P}
\newcommand{\co}{\mathcal{O}}
\newcommand{\fq}{\mathfrak q}
\newcommand{\Frob}{\mathrm{Frob}}
\newcommand{\Gal}{\mathrm{Gal}}
\newcommand{\Tr}{\mathrm{Tr}}
\newcommand{\bfN}{\mathbf{N}}
\newcommand{\Cl}{\mathrm{Cl}}
\newcommand{\cl}{\mathrm{cl}}
\newcommand{\Ker}{\mathrm{Ker}}
\newcommand{\RamHyp}{\mathbf{RamHyp}}
\newcommand{\ab}{\mathrm{ab}}
\newcommand{\ur}{\mathrm{un}}
\newcommand{\sgn}{\mathrm{sgn}}
\newcommand{\leg}[2]{\left(\frac{#1}{#2}\right)}
\newcommand{\qleg}[2]{\left[\frac{#1}{#2}\right]}
\newcommand{\hilbert}[3]{\Bigl(\dfrac{#1,#2}{#3}\Bigr)}
\newcommand{\pair}[1]{\langle{{#1}}\rangle}
\newcommand{\fct}[4]{\begin{split}#1&\longrightarrow#2\\#3&\longmapsto#4\end{split}}
\newcommand{\wt}[1]{\widetilde{#1}}
\numberwithin{equation}{section}
\newtheorem{thm}{Theorem}[section]
\newtheorem{prop}[thm]{Proposition}
\newtheorem{rmk}[thm]{Remark}
\newtheorem{lem}[thm]{Lemma}
\begin{document}
	\title{$\ell$-Class groups of fields in  Kummer towers}
	\author{Jianing Li}
	\address{Wu Wen-Tsun Key Laboratory of Mathematics,  School of Mathematical Sciences, University of Science and Technology of China, Hefei, Anhui 230026, China}
	\email{lijn@ustc.edu.cn}
	
	\author{Yi Ouyang}
	\address{Wu Wen-Tsun Key Laboratory of Mathematics,  School of Mathematical Sciences, University of Science and Technology of China, Hefei, Anhui 230026, China}
	
	\email{yiouyang@ustc.edu.cn}
	
	\author{Yue Xu}
	\address{Wu Wen-Tsun Key Laboratory of Mathematics,  School of Mathematical Sciences, University of Science and Technology of China, Hefei, Anhui 230026, China}
	\email{wasx250@mail.ustc.edu.cn}
	
	\author{Shenxing Zhang}
	\address{Wu Wen-Tsun Key Laboratory of Mathematics,  School of Mathematical Sciences, University of Science and Technology of China, Hefei, Anhui 230026, China}
	\email{zsxqq@mail.ustc.edu.cn}
	\subjclass[2010]{11R29, 11R11, 11R16, 11R18, 11R20}
	
	\keywords{Kummer Tower, class group, Ambiguous class number formula}
	
	\maketitle
	
	\begin{abstract}
		Let $\ell$ and $p$ be prime numbers and  $K_{n,m}=\Q(p^{\frac{1}{\ell^n}},\zeta_{2\ell^{m}})$.  We study the $\ell$-class group of $K_{n,m}$ in this paper. When $\ell=2$,
		we determine the structure of the $2$-class group of $K_{n,m}$ for all $(n,m)\in \Z_{\geq 0}^2$ in the case $p=2$ or  $p\equiv 3, 5\bmod{8}$, and for $(n,m)=(n,0)$, $(n,1)$ or $(1,m)$ in the case $p\equiv 7\bmod{16}$, generalizing the results of Parry about the $2$-divisibility of the class number of $K_{2,0}$.  We also obtain results about the $\ell$-class group of $K_{n,m}$ when $\ell$ is odd and in particular $\ell=3$.
		The main tools we use are class field theory,  including Chevalley's ambiguous class number formula and its generalization by Gras, and a stationary result about  the $\ell$-class groups in the $2$-dimensional Kummer tower $\{K_{n,m}\}$.

	\end{abstract}
	
	\section{Introduction}
	In this paper we let $\ell$ and $p$  be  prime numbers. For $n$ and $m$ non-negative integers, let $K_{n,m}=\Q(p^{\frac{1}{\ell^n}},\zeta_{2\ell^{m}})$.
	Let $A_{n,m}$ and $h_{n,m}$ be the $\ell$-part of the class group and the class number of $K_{n,m}$. The aim of this paper is to study the $\ell$-class groups of $K_{n,m}$ when $n$ and $m$ vary.
	
	First let us assume $\ell=2$. It is  well-known that the class number $h_{1,0}$ of $\Q(\sqrt{p})$  is odd by the genus theory of Gauss.
	In 1886, Weber \cite{Web86} proved that the class number $h_{0,m}$ of $\Q(\zeta_{2^{m+1}})$ is odd for any $m\geq 0$. In 1980, by a more careful application of genus theory for quartic fields, Parry \cite{Par80}
	showed that $A_{2,0}$ is cyclic and
	\begin{enumerate}
		\item[(i)] If $p=2$ or $p\equiv 3,5 \bmod 8$,  then $2\nmid h_{2,0}$.
		\item[(ii)] If $p\equiv 7\bmod 16$, then $2\parallel h_{2,0}$.
		\item[(iii)] If $p\equiv 15 \bmod 16$, then $ 2\mid h_{2,0}$.
		\item[(iv)] If $p\equiv 1 \bmod 8$, then $2\mid h_{2,0}$. Moreover, if $2$ is not a fourth power modulo $p$,   then $2\parallel h_{2,0}$.
	\end{enumerate}
	For $p\equiv 9 \bmod 16$, Lemmermeyer  showed that $2\parallel h_{2,0}$,  see \cite{Mon10}. For $p\equiv 15\bmod 16$, one can show that  $4\mid h_{2,0}$ using genus theory (unpublished manuscripts by the authors  and Lemmermeyer respectively).
	
	Our first result of this paper is
	\begin{thm}\label{thm:main1}
		Let $p$ be a prime number, $K_{n,m}=\Q(p^{\frac{1}{2^n}},\zeta_{2^{m+1}})$.
		Let $A_{n,m}$ be the $2$-part of the class group and $h_{n,m}$ the class number of $K_{n,m}$.
		
		$(1)$ If $p=2$ or $p\equiv 3 \bmod 8$,  then $h_{n,m}$ is odd for   $n,m\ge 0$.
		
		$(2)$  If $p\equiv 5 \bmod 8$, then $h_{n,0}$ and $h_{1,m}$ are odd for  $n,m\ge 0$ and $2\parallel h_{n,m}$ for   $n\geq 2$ and $m\geq 1$.
		
		$(3)$ If $p\equiv 7 \bmod 16$, then $A_{n,0}\cong\Z/{2\Z}, A_{n,1}\cong\Z/{2\Z}\times \Z/{2\Z}$ for  $n\ge 2$, and $A_{1,m}\cong \Z/{2^{m-1}\Z}$ for  $m\geq 1$.
	\end{thm}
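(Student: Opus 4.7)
\noindent\emph{Proof proposal.} The plan is a double induction on $(n,m)$, propagating known base cases up the tower $\{K_{n,m}\}$ by quadratic steps and controlling the $2$-class group at each step via Chevalley's ambiguous class number formula. The base cases are available: Gauss's genus theory gives $A_{1,0}=0$, Weber's theorem gives $A_{0,m}=0$ for all $m\ge 0$, and Parry's theorem supplies $A_{2,0}$ in each relevant residue class modulo $16$.

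For the inductive step, fix a quadratic extension $L/F$ between consecutive layers of the tower (in either direction), with $G=\Gal(L/F)=\langle\sigma\rangle$. Chevalley's formula
\[
|\Cl(L)^G|\;=\;|\Cl(F)|\cdot\frac{\prod_{v}e_v(L/F)}{2\cdot[E_F:E_F\cap N_{L/F}L^{\times}]}
\]
(the product being over all places of $F$) gives the size of the ambiguous $2$-class group. Only the primes $2$, $p$ and possibly the archimedean places can ramify, so for each residue class of $p\bmod 16$ I can determine which places actually ramify at each step, compute the ramification index product, and estimate the unit-norm index by exhibiting explicit local norms --- natural candidates being $p^{1/2^{n-1}}$, the real cyclotomic units, and units drawn from the diagonal subfield $\Q(\zeta_{2^{n+m+1}})\cap K_{n,m}$. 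From the ambiguous order I will then extract the full order $|A_L|$ by combining Nakayama's lemma (whenever $A_L^G$ is trivial, so is $A_L$, since $G$ is a $2$-group acting on a finite $2$-group) with the stationary result advertised in the abstract, which forces $A_L\cong A_L^G$ once the ambiguous count matches $|A_F|$.

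Part $(1)$ then amounts to showing that for $p=2$ or $p\equiv 3\bmod 8$ the units of $F$ already supply enough local norms that the ambiguous index equals $|A_F|$ at every step, so triviality propagates from the base. Part $(2)$ proceeds the same way except that the splitting behaviour of $2$ in $\Q(\sqrt p)$ versus in $\Q(\zeta_8,\sqrt p)$ makes the ramification product gain one extra factor of $2$ precisely when both $n\ge 2$ and $m\ge 1$, which explains the equality $2\parallel h_{n,m}$ in that range. Part $(3)$ for $p\equiv 7\bmod 16$ is more delicate: for the abelian-over-$\Q$ tower $\{K_{1,m}\}_{m\ge 1}$ I will combine genus theory with the analytic class number formula to obtain $A_{1,m}\cong\Z/2^{m-1}\Z$, while $A_{n,0}$ and $A_{n,1}$ for $n\ge 2$ are pinned down by iterated Chevalley plus the stationary result, seeded by Parry's $A_{2,0}\cong\Z/2\Z$ and the bound on the $2$-rank coming from the number of ramified primes.

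The main technical obstacle will be the precise determination of $[E_F:E_F\cap N_{L/F}L^{\times}]_2$ in the mixed regime $n\ge 2$, $m\ge 1$. There $F=K_{n,m}$ is non-abelian over $\Q$, its unit group has substantial rank, and deciding which units are everywhere local norms forces Hilbert-symbol computations at the primes above $2$ and $p$. Getting this index exactly right --- not merely up to a bounded factor --- is what distinguishes the various residue classes of $p\bmod 16$ and pins down the specific cyclic or bicyclic structures in part $(3)$.
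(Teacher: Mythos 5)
Your overall strategy (base cases plus Chevalley plus a stabilization result) matches the paper's in spirit, but there are three concrete gaps that would stop the argument in the nontrivial cases.

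First, Chevalley's formula alone is not enough once the ambiguous class group is nontrivial. In parts (2) and (3) you will find steps where $|\Cl_L^G|=2$ or $4$; this gives only a \emph{lower} bound on $|A_L|$, and the Nakayama-type lemma (``$2\nmid|\Cl_L^G|$ implies $2\nmid h_L$'') says nothing here. To convert the ambiguous count into the actual order and structure of $A_L$ you must first show that $A_L$ equals the $2$-part of the subgroup generated by the (visibly $G$-invariant) classes of the ramified primes above $2$. That is exactly what Gras's generalization of Chevalley's formula with a nontrivial submodule $C=\langle\cl(\mathfrak q)\rangle$ accomplishes: one shows $2\nmid|(\Cl_L/C)^G|$, deduces $A_L=C(2)=A_L^G$, and only then does Chevalley's count pin down $|A_L|$. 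Your proposal never invokes Gras's formula or any substitute for it, so the equalities $2\parallel h_{n,m}$ in part (2) and the structures $\Z/2\Z$, $(\Z/2\Z)^2$ in part (3) are not actually reached. Second, the stationary result does not say ``$A_L\cong A_L^G$ once the ambiguous count matches $|A_F|$''; it is a statement about a cyclic extension of degree $\ell^2$ satisfying the total-ramification hypothesis, with the hypothesis that the class groups (mod $\ell^n$) of the two \emph{lower} layers agree. Consequently it cannot be applied along single quadratic steps as you propose; the paper has to organize the induction around degree-$4$ (and degree-$8$) cyclic subextensions such as $K_{2,2}/K_{0,2}$ and $K_{3,2}/K_{0,2}$, where the higher-degree Hilbert symbols also carry strictly more information than the quadratic ones.

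Third, for $A_{1,m}\cong\Z/2^{m-1}\Z$ in part (3), ``genus theory with the analytic class number formula'' is not a workable plan as stated. Genus theory (via Gras) does give the upper bound, namely that $A_{1,m}$ is a cyclic group generated by $\cl(\fq_{1,m})$ and hence a quotient of $\Z/2^{m-1}\Z$; but the matching lower bound requires showing the $2$-class numbers in the tower $\{K_{1,m}\}_m$ are unbounded. The paper gets this from Kida's $\lambda$-invariant formula applied to the cyclotomic $\Z_2$-extension of $\Q(\sqrt{-p})$ (the $\lambda$-invariant equals $1$), combined with the stabilization result: if the order ever stalled below $2^{m-1}$ it would stall forever, contradicting unboundedness. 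Some input of this Iwasawa-theoretic kind is needed; the analytic class number formula by itself does not produce it without an equivalent amount of work.
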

	

	Let $p\equiv 3\bmod 8$ and $\epsilon=a+b\sqrt{p}$ be the  fundamental unit of $\Q(\sqrt{p})$. Parry~\cite{Par80} and Zhang-Yue~\cite{ZY14} showed that $a \equiv -1 \bmod {p}$ and $v_2(a)=1$.
	Applying Theorem~\ref{thm:main1}, we obtain the following  analogue of their results.

	\begin{thm} \label{thm: units}  Assume $p\equiv 7\bmod 16$. Let $\epsilon$ be the fundamental unit of $\Q(\sqrt{p})$.

		$(1)$   There exists a totally positive unit $\eta$ of  $\Q(\sqrt[4]{p})$ such that $\bfN(\eta)=\epsilon$ and  the group of units $\co^\times_{\Q(\sqrt[4]{p})}=\langle \eta, \epsilon, -1\rangle$.
		
		$(2)$  For any unit $\eta' \in \bfN^{-1}(\epsilon)$ in $\Q(\sqrt[4]{p})$, one has $ v_{\fq}(\Tr_{\Q(\sqrt[4]{p})/\Q(\sqrt{p})}(\eta'))=3$ and $\eta' \equiv -\sgn(\eta') \bmod \sqrt[4]{p}$,  where $\fq$ is the unique prime of $\Q(\sqrt{p})$ above $2$ and $\sgn$ is the signature function.
		
	\end{thm}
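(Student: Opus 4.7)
The plan is to apply Chevalley's ambiguous class number formula to the cyclic degree-$2$ extension $L/K$ with $L=K_{2,0}$, $K=K_{1,0}$, and $G=\Gal(L/K)=\langle\sigma\rangle$, $\sigma(\sqrt[4]{p})=-\sqrt[4]{p}$, leveraging Theorem~\ref{thm:main1}(3). For $p\equiv 7\bmod 16$ the places of $K$ ramifying in $L$ are $(\sqrt{p})$, the unique prime $\fq$ above $2$ (since $\sqrt{p}$ is a local uniformizer at $\fq$ and $L/K$ locally adjoins $\sqrt{\sqrt{p}}$), and the real place $\sqrt{p}\mapsto-\sqrt{p}$ (which becomes complex in $L$), so $\prod_v e_v=2^3$. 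Combined with $h_K$ odd (genus theory) and $|A_{2,0}|=2$ with $A_{2,0}$ $G$-fixed (being cyclic of order $2$), Chevalley's formula will yield
\[
[\co_K^\times:\co_K^\times\cap\bfN_{L/K}L^\times]=2.
\]
Since $\bfN_{K/\Q}(\epsilon)=+1$ (as $p\equiv 3\bmod 4$), $\epsilon$ is totally positive, so $-1$ and $-\epsilon$ are negative at the ramified real place and thus not local norms from $\CC$. The nontrivial norm coset in $\co_K^\times/(\co_K^\times)^2$ is then that of $\epsilon$, giving $\epsilon\in\bfN_{L/K}L^\times$.

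For Part~(1), I would take $x\in L^\times$ with $\bfN x=\epsilon$ and adjust modulo $\ker\bfN=\{y/\sigma y:y\in L^\times\}$ (Hilbert 90), using the explicit principality of the ramified primes of $L$ above $p$ and $2$, to produce $\eta\in\co_L^\times$ with $\bfN\eta=\epsilon$. Replacing $\eta$ by $-\eta$ if needed makes it totally positive: $\sigma$ swaps the two real embeddings of $L$, and $\bfN\eta=\epsilon>0$ forces $\eta$ to have the same sign at both. Since $L$ has signature $(2,1)$, $\co_L^\times$ has rank $2$ and torsion $\{\pm 1\}$. Independence of $\eta,\epsilon$: $\eta^a\epsilon^b=\pm 1$ combined with $\bfN$ gives $a=-2b$, and then $(\sigma\eta/\eta)^a=1$ would force $\sigma\eta=\pm\eta$, placing $\eta$ in $K$ or $\sqrt[4]{p}\cdot K$---both incompatible with $\bfN\eta=\epsilon$ via the constant term modulo $p$ (recall $a^2\equiv 1\bmod p$, so $a\not\equiv 0\bmod p$). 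Finally, $[\co_L^\times:\langle-1,\epsilon,\eta\rangle]=1$ follows by comparing the image of $\bfN:\co_L^\times\to\co_K^\times$ with the index $2$ above; any additional independent unit would strictly enlarge $\bfN\co_L^\times$ beyond $\langle\epsilon\rangle$, a contradiction.

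For Part~(2), $\ker\bfN\cap\co_L^\times=\langle-1,\epsilon\eta^{-2}\rangle$ parametrizes $\bfN^{-1}(\epsilon)\cap\co_L^\times$ as $\pm\eta^{1-2j}\epsilon^j$ for $j\in\Z$. Writing $\eta'=u+v\sqrt[4]{p}+w\sqrt{p}+z\sqrt[4]{p^3}$ with $u,v,w,z\in\Z$, one has $\Tr_{L/K}\eta'=2(u+w\sqrt{p})$ and $\eta'\equiv u\pmod{\sqrt[4]{p}}$; the norm relation $\bfN\eta'=a+b\sqrt{p}$ becomes
\begin{align*}
u^2+pw^2-2pvz &=a,\\
2uw-v^2-pz^2 &=b.
\end{align*}
The first equation mod $p$ gives $u^2\equiv a\bmod p$; combined with $a^2\equiv 1\bmod p$ and the congruence $a\equiv 1\bmod p$ valid for $p\equiv 7\bmod 16$ (to be verified by local analysis at $(\sqrt{p})$ using that $\epsilon$ is a unit-norm), this forces $u\equiv\pm 1\bmod p$. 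I would check the sign directly for the totally positive $\eta$ of Part~(1) (obtaining $u\equiv -1\bmod p$) and then propagate it via the parametrization (noting $\epsilon\equiv 1\bmod\sqrt[4]{p}$) to conclude $u\equiv-\sgn(\eta')\bmod p$. For the trace, $v_\fq(\Tr\eta')=v_\fq(2)+v_\fq(u+w\sqrt{p})=2+v_\fq(u+w\sqrt{p})$, and reducing the norm equations mod small powers of $2$ under $p\equiv 7\bmod 16$ (so $b$ is odd) forces $u,w$ both odd, placing $u+w\sqrt{p}\in\fq\setminus\fq^2$ and giving $v_\fq(\Tr\eta')=3$.

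The hard part will be establishing the \emph{exact} equality $\co_L^\times=\langle-1,\epsilon,\eta\rangle$ (translating the Chevalley norm-index from global element-norms to unit-norms via the Tate cohomology of $\co_L^\times$ and its interaction with $A_{2,0}$), and pinning down the precise sign $u\equiv-\sgn(\eta')\bmod p$, both of which exploit the full hypothesis $p\equiv 7\bmod 16$ rather than just $p\equiv 3\bmod 4$.
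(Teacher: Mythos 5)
Your outline for Part (1) follows the same route as the paper (Chevalley's formula for $K_{2,0}/K_{1,0}$, the sign argument at the ramified real place to show $\epsilon$ is the nontrivial norm class), but the step you yourself defer as ``the hard part'' --- passing from $\epsilon\in\bfN_{L/K}L^{\times}$ to $\epsilon\in\bfN_{L/K}\co_L^{\times}$ --- is where the content lies, and your proposed mechanism is flawed: you want to adjust a solution of $\bfN x=\epsilon$ via Hilbert 90 ``using the explicit principality of the ramified primes of $L$ above $p$ and $2$,'' but the prime $\fq_{2,0}$ above $2$ is \emph{not} principal (its class generates $A_{2,0}\cong\Z/2\Z$ by Proposition~\ref{prop: gen by 2}). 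The paper instead uses that $A_{2,0}^{G}$ is generated by the class of a $G$-\emph{invariant} ideal, which by \cite[Proposition 1.3.4]{Gre} yields $E_{1,0}\cap\bfN K_{2,0}^{\times}=\bfN E_{2,0}$; some input of this kind is unavoidable.

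The gaps in Part (2) are more serious. First, the sign: you propose to ``check directly'' that the totally positive $\eta$ satisfies $\eta\equiv-1\bmod\sqrt[4]{p}$, but no elementary congruence determines this sign. The paper proves it by showing $\hilbert{\eta}{\sqrt[4]{p}}{(\sqrt[4]{p})}_2=-1$, which it extracts from Chevalley's formula applied to $K_{3,0}/K_{2,0}$ together with $|A_{3,0}|=|A_{2,0}|=2$ (Theorem~\ref{thm: 7mod16}), i.e.\ one must go one level \emph{up} the tower and use the stabilized class group there. Second, the trace valuation: the claim that reducing the norm equations modulo small powers of $2$ ``forces $u,w$ both odd'' is false. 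Setting $t=v_{\fq}(u+w\sqrt{p})$, the parity of $a$ only excludes $t=0$; the case $t=2$ (both $u,w$ even with $u/2$, $w/2$ of opposite parity, and the even one of $v,z$ congruent to $2\bmod 4$) is fully consistent with $8\parallel a$ and $b\equiv\pm3\bmod 8$ --- one checks it gives $b\equiv-(v^{2}+pz^{2})\equiv\pm3\bmod 8$ as well, so no contradiction arises and $v_\fq(\Tr\eta')=4$ is not excluded. The paper first proves $t$ is \emph{odd} via an explicit quadratic reciprocity law in $\Q(\sqrt{p})$ (Theorem~\ref{thm:quad_leg_rec}) applied to $\alpha_0$ and $-\sqrt{p}$, combined with $\qleg{\pi}{-\sqrt{p}}=-1$ from Lemma~\ref{lem: index7mod16}; only with oddness in hand do the mod-$8$ congruences exclude $t\geq3$. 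Without a substitute for that reciprocity step, your congruence argument cannot reach $v_{\fq}(\Tr\eta')=3$.
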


	\begin{rmk}\label{rmk: untis} 
		$(1)$ We may call the unit $\eta$  the relative fundamental unit of $\Q(\sqrt[4]{p})$.
		The first part of this theorem is due to Parry, see \cite[Theorem 3]{Par80}. We include a proof here for completeness. 
		
		$(2)$  For $\eta' \in \co^\times_{\Q(\sqrt[4]{p})}$ such that  $\bfN(\eta')=\epsilon$,  we know $\eta'$ is either totally positive or totally negative since  $\epsilon$ is totally positive.  Therefore the sign of $\eta'$ is well-defined.
	\end{rmk}
	
	Now assume $\ell$ is odd. Recall that $\ell$ is regular if $\ell\nmid h_{0,1}$, the class number of $\Q(\zeta_\ell)$. We have the following result:
	\begin{thm}\label{thm: odd} 
		Assume  $\ell$ is an odd  regular prime, and  $p$ is either $\ell$ or a prime  generating the group $(\Z/{\ell^2\Z})^\times$. Then $\ell\nmid h_{n,m}$, the class number of $K_{n,m}=\Q(p^{\frac{1}{\ell^n}},\zeta_{\ell^{m}})$ for any $n,m\geq 0$. 
	\end{thm}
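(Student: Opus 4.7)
The plan is to prove $A_{n,m}=0$ for all $n,m\ge 0$ by induction on $n$, combined with a reduction from $m=0$ to $m=1$. The base case $A_{0,m}=0$ for $\ell$ regular is classical: apply Chevalley's formula at each step of the cyclotomic $\Z_\ell$-tower above $K_{0,1}=\Q(\zeta_\ell)$, which is totally ramified at the unique prime above $\ell$. For $m=0$, the extension $K_{n,1}/K_{n,0}$ has degree $\ell-1$ coprime to $\ell$, so $A_{n,0}$ injects into $A_{n,1}$ and the case $m=0$ reduces to $m=1$; accordingly we may assume $m\ge 1$.

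For the inductive step, set $K=K_{n,m}$, $L=K_{n+1,m}=K(\alpha^{1/\ell})$ with $\alpha=p^{1/\ell^n}$, and $G=\Gal(L/K)\cong\Z/\ell\Z$. Chevalley's ambiguous class number formula reads
\[
|A_L^G|=\frac{|A_K|\cdot\prod_v e_v}{\ell\cdot [E_K:E_K\cap \bfN_{L/K}(L^\times)]},
\]
and by Nakayama it suffices to show $|A_L^G|\le 1$ assuming $A_K=0$. Let $\fp$ be the unique prime of $K$ above $p$ (which is inert in $K_{0,m}/\Q$ by the primitive-root hypothesis and totally ramified in $K/K_{0,m}$). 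Then $\alpha$ is a uniformizer at $\fp$, so $L/K$ is totally ramified there with $e_\fp=\ell$. When $p=\ell$, $\fp$ is the only ramified prime, giving $\prod e_v=\ell$ and $|A_L^G|\le 1/[E_K:E_K\cap\bfN]\le 1$ immediately. When $p\ne\ell$, let $\fq$ denote the unique prime of $K$ above $\ell$; linear disjointness of the abelian extension $\Q_\ell(\zeta_{\ell^m})$ and the non-Galois extension $\Q_\ell(p^{1/\ell^n})$ over $\Q_\ell$ gives $[L_\fq:K_\fq]=\ell$, and the residue field at $\fq$ is $\F_\ell$ whose multiplicative group has order coprime to $\ell$, so it cannot be enlarged by an $\ell$-th root; hence $L/K$ is also totally ramified at $\fq$ with $e_\fq=\ell$, yielding $\prod e_v=\ell^2$.

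In the $p\ne\ell$ case it remains to show $[E_K:E_K\cap\bfN(L^\times)]=\ell$, which will give $|A_L^G|=|A_K|\le 1$. The plan is to take $u=\zeta_{\ell^m}\in E_K$ and show $u$ fails to be a local norm at $\fp$, from which Hasse's norm theorem (applicable since $L/K$ is cyclic) gives $u\notin\bfN(L^\times)$. The local extension $L_\fp/K_\fp$ is tamely totally ramified of degree $\ell$ generated by the $\ell$-th root of the uniformizer $\alpha$, so by the tame Hilbert symbol
\[
(u,\alpha)_\fp=\bar u^{(q-1)/\ell}\in\mu_\ell\subset\F_q^\times,\qquad q=p^{\ell^{m-1}(\ell-1)},
\]
where $\F_q$ is the residue field at $\fp$ in $K$. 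Lifting the exponent together with the reformulation $v_\ell(p^{\ell-1}-1)=1$ of the hypothesis ``$p$ primitive root mod $\ell^2$'' gives $v_\ell(q-1)=m$; since $\zeta_{\ell^m}$ has exact order $\ell^m$ in $\F_q^\times$ and $v_\ell((q-1)/\ell)=m-1$, the image $\bar u^{(q-1)/\ell}$ is a primitive $\ell$-th root of unity, so the symbol is nontrivial.

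The main technical obstacle we expect is the ramification analysis at $\fq$ when $p\ne\ell$: one must check both that the local degree-$\ell$ extension does not split (via the linear disjointness over $\Q_\ell$) and that the residue field at $\fq$ stays $\F_\ell$. Once the ramification picture is settled, the Hilbert-symbol calculation and the Nakayama argument proceed cleanly, and the induction closes.
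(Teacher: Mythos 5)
Your proposal is correct, and it reaches the theorem by a genuinely different route from the paper. The paper makes a single application of Chevalley's formula, to $K_{1,2}/K_{0,2}$, shows $\ell\nmid h_{1,2}$ via the symbol $\hilbert{\zeta_{\ell^2}}{p}{p\co_{0,2}}_\ell=\zeta_\ell^{(p^{\ell-1}-1)/\ell}\neq 1$, and then propagates this to all $(n,m)$ using the stability result (Proposition~\ref{prop: square_clgp}, which rests on the Iwasawa-theoretic Proposition~\ref{prop:stable_theorem}) together with norm surjectivity. You instead fix $m\geq 1$ and run Chevalley's formula up every layer $K_{n+1,m}/K_{n,m}$, which works because the ramification picture (exactly the two primes $\fp\mid p$ and $\fq\mid\ell$, each with $e=\ell$) and the norm obstruction (the unit $\zeta_{\ell^m}$ failing to be a local norm at $\fp$, since $v_\ell(q-1)=m$ by the lifting-the-exponent computation while $\zeta_{\ell^m}$ has exact order $\ell^m$ in the residue field) are uniform in $n$. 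What your approach buys is the complete elimination of the stability machinery, the most technical part of the paper, at the cost of verifying the same local data at every layer rather than once; the paper's approach buys a single computation at the bottom of the tower at the cost of developing Proposition~\ref{prop:stable_theorem}. The handling of $m=0$ by coprime-degree transfer into $K_{n,1}$, and of $p=\ell$ by having only one ramified prime, matches the paper's (the latter is left to the reader there).

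One justification in your sketch should be repaired, though it does not endanger the argument. The claim that $L_\fq/K_\fq$ must be ramified ``because the residue field $\F_\ell$ cannot be enlarged by an $\ell$-th root'' is not valid reasoning in residue characteristic $\ell$: unramified degree-$\ell$ Kummer extensions generated by $\ell$-th roots of units do exist over such local fields (e.g.\ over $\Q_\ell(\zeta_\ell)$ for suitable units). The correct argument is the one in Lemma~\ref{lem: ram_odd}: the maximal abelian subextension of $\Q_\ell(p^{1/\ell^{n+1}},\zeta_{\ell^{m}})/\Q_\ell$ is $\Q_\ell(\zeta_{\ell^{m}})$, which is totally ramified, so no unramified subextension can appear. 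In any case your proof survives either outcome: if $\fq$ did not ramify, then $\prod_v e_v=\ell$ and Chevalley's formula would give $\ell\nmid|\Cl_L^G|$ with no unit computation needed (and in fact the product formula combined with your nontrivial symbol at $\fp$ forces $e_\fq=\ell$ a posteriori).
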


	For the particular case $\ell=3$,  the following results about the $3$-class groups of $\Q(\sqrt[3]{p})$ and $\Q(\sqrt[3]{p}, \zeta_3)$ were obtained by several authors:
	\begin{enumerate}
		\item[(i)] (\cite{Hon71}) If $p=3$ or $p\equiv 2 \bmod 3$,  then $3\nmid h_{1,1}$ and $3\nmid h_{1,0}$. 
		\item[(ii)] (\cite{Ger76} ) If $p\equiv 1\bmod 3$,  then $\mathrm{rank}_3 A_{1,0}=1$  and $\mathrm{rank}_3 A_{1,1}=1 \text{ or } 2$. 
		\item[(iii)] (\cite{Aouissi}) If $p\equiv 4,7\bmod 9$, then $A_{1,0}\cong \Z/{3\Z}$ and 
		\[ A_{1,1}\cong \begin{cases}
		\Z/{3\Z}  & \text{ if } \leg{3}{p}_3\neq 1,\\
		(\Z/{3\Z})^2  & \text{ if } \leg{3}{p}_3= 1.\\
		\end{cases}  \]
		\item[(iv)] (\cite{C-E05}, \cite{Ger05}) If $p\equiv 1\bmod 9$, then $\mathrm{rank}_3 A_{1,1}=1$ if and only if $9\mid h_{1,0}$.
	\end{enumerate}
	We refer to \cite{Ger05} and  \cite{Aouissi} for more details. However, $h_{n,m}$ and $A_{n,m}$ for general $n$ and $m$ was rarely studied in the literature as far as we know.
	We have the following result in this case: 
	\begin{thm}\label{thm:main2}
		Let $p$ be a prime number. Let $A_{n,m}$ be the $3$-part of the class group and $h_{n,m}$ the class number of $K_{n,m}=\Q(p^{\frac{1}{3^n}},\zeta_{3^{m}})$. 
		
		$(1)$ If $p=3$ or $p\equiv 2,5\bmod 9$,  then $3\nmid h_{n,m}$ for   $n,m\ge 0$.
		
		$(2)$  If $p\equiv 4,7 \bmod 9$ and the cubic residue symbol $\leg{3}{p}_3\neq 1$, then $A_{n,m}\cong \Z/{3\Z}$ for $n\geq 1$, $m\geq 0$.  
	\end{thm}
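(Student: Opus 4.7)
The plan is to deduce Part~(1) directly from Theorem~\ref{thm: odd} and to establish Part~(2) by bootstrapping from the base cases via Chevalley/Gras's ambiguous class number formula together with the stationary result for $\{A_{n,m}\}$ developed earlier in the paper. For Part~(1), note that $\ell=3$ is regular because $h_{0,1}=1$, and the conditions $p=3$ or $p\equiv 2,5\bmod 9$ are exactly the cases $p=\ell$ or $p$ generates $(\Z/{\ell^2\Z})^\times$: indeed $(\Z/{9\Z})^\times$ is cyclic of order $6$ with generators $2$ and $5$. Theorem~\ref{thm: odd} then yields $3\nmid h_{n,m}$ for all $n,m\geq 0$.

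For Part~(2), I would start from the base cases $A_{1,0}\cong\Z/{3\Z}$ and $A_{1,1}\cong\Z/{3\Z}$, which are recorded in item~(iii) above under the additional hypothesis $\leg{3}{p}_3\neq 1$. The goal is to propagate cyclicity of order exactly $3$ to every $(n,m)$ with $n\geq 1$ and $m\geq 0$. For $m\geq 1$ the extension $K_{n,m}/K_{n-1,m}$ is cyclic of degree $3$ (since $\zeta_3\in K_{0,1}\subset K_{n-1,m}$), so Chevalley's formula (or Gras's refinement) applies and expresses $|A_{n,m}^{\Gal(K_{n,m}/K_{n-1,m})}|$ in terms of $|A_{n-1,m}|$, the product of ramification indices of the ramified places, and a unit norm index. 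The case $m=0$ needs separate treatment because $K_{n,0}/K_{n-1,0}$ is a non-Galois degree-$3$ extension.

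Concretely, I would first fix $n=1$ and induct on $m\geq 1$. In $K_{1,m}=\Q(\sqrt[3]{p},\zeta_{3^m})$ only primes above $3$ and $p$ ramify, and the residue condition $p\equiv 4,7\bmod 9$ pins down their splitting behavior. Chevalley's formula should produce $|A_{1,m}^G|=3$; combined with $A_{1,m-1}\cong\Z/{3\Z}$ and surjectivity of the norm on the invariants, this forces $A_{1,m}$ to be cyclic of $3$-power order, and the stationary result pins the order at exactly $3$. Next I would induct on $n\geq 2$ to obtain $A_{n,m}\cong\Z/{3\Z}$ for $m\geq 1$ by the analogous argument along $K_{n,m}/K_{n-1,m}$. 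Finally, the case $n\geq 2, m=0$ would be handled by applying Chevalley's formula to the quadratic Galois extension $K_{n,1}/K_{n,0}$: the $3$-part of the formula collapses to $|A_{n,1}^\tau|=|A_{n,0}|$, and combining this with the injective extension map $A_{n,0}\hookrightarrow A_{n,1}$ (valid because $[K_{n,1}:K_{n,0}]=2$ is invertible on the $3$-group $A_{n,0}$) together with a non-capitulation argument for the totally ramified prime above $p$ yields $A_{n,0}\cong\Z/{3\Z}$.

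The hard part will be the control of the unit norm index $[E_{K_{n-1,m}}:E_{K_{n-1,m}}\cap \bfN_{K_{n,m}/K_{n-1,m}}(K_{n,m}^\times)]$ appearing in the denominator of Chevalley/Gras's formula. This index requires a precise description of the relative fundamental units in the Kummer tower---a cubic analogue of the unit $\eta$ produced in Theorem~\ref{thm: units} for $\ell=2$---together with their congruences modulo $\sqrt[3]{p}$ and their behavior under norms. The role of the stationary result is precisely to avoid redoing this unit calculation at every level: once the stabilization criterion is verified at a single transition, the cyclicity and exact order $3$ propagate throughout the 2-dimensional tower $\{K_{n,m}\}$.
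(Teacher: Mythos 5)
Part (1) is fine and matches the paper: $3$ is regular and $2,5$ are precisely the generators of $(\Z/9\Z)^\times$, so Theorem~\ref{thm: odd} applies verbatim.

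For Part (2) there is a genuine gap, and you have in fact put your finger on it yourself: your inductions on $m$ (along $K_{1,m}/K_{0,m}$) and especially on $n$ (along $K_{n,m}/K_{n-1,m}$) require the unit norm index $[E_{K_{n-1,m}}:E_{K_{n-1,m}}\cap \bfN K_{n,m}^\times]$ for base fields $K_{n-1,m}$ with $n\geq 2$. These are non-abelian fields of degree $3^{n-1}\cdot\phi(3^m)$ whose unit groups are not explicitly known; producing a ``cubic analogue of Theorem~\ref{thm: units}'' with congruences modulo $\sqrt[3]{p}$ at every level is not a routine computation but a substantial open-ended project, and nothing in your outline supplies it. The paper avoids this entirely by never applying the ambiguous class number formula over a field containing $\sqrt[3]{p}$: it applies Gras' formula once, to the \emph{cyclic degree-$9$} extension $K_{2,2}/K_{0,2}$ over $\Q(\zeta_9)$, where $h_{0,2}=1$ and the unit group is generated by explicit cyclotomic units $\zeta_9$, $(1-\zeta_9^a)/(1-\zeta_9)$, whose Hilbert symbols at $\fp_1,\fp_2,\fq_{0,2}$ can be evaluated by norm-compatibility (Lemma~\ref{lem:3part}). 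This yields $A_{2,2}=\langle\cl(\fq_{2,2})\rangle(3)$ and $|A_{2,2}|=3$; combined with the lower bound $A_{1,0}\cong\Z/3\Z$ from the literature and norm surjectivity, one gets the \emph{diagonal} equality $|A_{1,1}|=|A_{2,2}|$, which is exactly the hypothesis of Proposition~\ref{prop: square_clgp}(2). Note also that the stationary proposition needs this diagonal comparison $A_{n_0,m_0}$ versus $A_{n_0+1,m_0+1}$; your separate inductions in the $m$- and $n$-directions would not feed into it, and if they succeeded they would make the stationary result superfluous. Your treatment of $m=0$ is also more elaborate than necessary: since $K_{n,1}/K_{n,0}$ ramifies at the real place and $K_{n,0}/K_{1,0}$ is totally ramified at $p$, the norm maps give $3=|A_{1,0}|\leq|A_{n,0}|\leq|A_{n,1}|=3$ directly.
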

	
	\begin{rmk}
		A. Lei~\cite{Lei17} obtained the growth formula of class numbers in $\Z^{d-1}_{\ell}\rtimes \Z_\ell$-extensions for an odd prime $\ell$. Under the conditions in Theorem~\ref{thm: odd} or \ref{thm:main2}, the Kummer tower $K_{\infty,\infty}/K_{0,1}$ satisfies the conditions in Lei's paper. Then by \cite[Corollary 3.4]{Lei17}, one has  for each $m$, there exist integers $\mu_m$ and $\lambda_m$ such that  \[ v_\ell(h_{n,m})=\mu_m\ell^n+ \lambda_m n + O(1) \text{ for } n\gg 0 .\]
		Theorem~\ref{thm: odd} and \ref{thm:main2} thus imply that the invariants $\mu_m=\lambda_m=0$ for all $m$.
	\end{rmk}
	
	To prove our results,  we need to use class field theory,  including Chevalley's ambiguous class number formula and its generalization by Gras. The most technical part of our paper is  a stationary result of  $\ell$-class groups in a cyclic $\Z/\ell^2\Z$-extension under certain conditions, and its application to the study of $\ell$-class groups in the $2$-dimensional Kummer tower $\{K_{n,m}\}$. We emphasize that the stationary result could be used to other situations. Due to the computational nature of our results, we impose conditions to simplify computation. It would be of interest to study other cases, for example,  replacing $p$ by some positive integer with $2$ or more prime factors.

	The organization of this paper is as follows. In \S2 we introduce notations and conventions for the paper, and present basic properties of the Hilbert symbols and Gras' formula on genus theory.
	In \S3, we prove our stationary result  on $\ell$-class groups in certain cyclic $\ell$-extensions by using argument from Iwasawa theory, and then prove a stationary result about  the $\ell$-class groups of $K_{n,m}$.  \S4 is devoted to the proof of results for the easier case that $\ell$ is odd and \S5 for the more complicated case $\ell=2$.

	\section{Preliminary}
	
	\subsection{Notations and Conventions} The numbers $\ell$ and $p$ are always   prime numbers. The $\ell$-Sylow subgroup of a finite abelian group $M$ is denoted by $M(\ell)$. $\zeta_n$ is a primitive $n$-th root of unity and $\mu_n$ is the group of $n$-th roots of unity.
	
	For a number field $K$, we denote by  $\Cl_K$, $h_K$,  $\co_k$, $E_K$ and $\cl$   the class group, the class number, the ring of integers, the unit group of the ring of integers  and the ideal class map of $K$ respectively.  For $w$ a place of $K$, $K_w$ is the completion of $K$ by  $w$. For $\fp$ a prime of $K$, $v_{\fp}$ is the additive valuation associated to $\fp$.

	For an extension  $K/F$  of number fields,  $v$  a  place of $F$ and $w$ a place of $K$ above $v$, let $e_{w/v}=e(w/v,K/F)$ be the ramification index in $K/F$ if $v$ is finite and $e_{w/v}=[K_w:F_v]$ if $v$ is infinite.
	We say that $w/v$ is ramified if $e_{w/v}>1$. $w/v$ is totally ramified if $e_{w/v}=[K:F]$, in this case $w$ is the only place above $v$ and we can also say that $v$ is totally ramified in $K/F$.
	Note that for $v$ infinite, $w/v$ is ramified if and only if $w$ is complex and $v$ is real, and in this case $e_{w/v}=2$. Hence an infinite place $v$ is totally ramified if and only if $K/F$ is quadratic, $F_v=\R$  and $K_w=\CC$.
	When $K/F$ is Galois, then $e_{w/v}$ is independent of $w$ and we denote it by $e_v$.

	Denote by $\bfN_{K/F}$ the norm map from $K$ to $F$, and the induced norm map  from $\Cl_K$ to $\Cl_F$. If the extension is clear, we use $\bfN$ instead of $\bfN_{K/F}$.
	
	When $K=K_{n,m}=\Q(p^{\frac{1}{\ell^n}},\zeta_{2^{m+1}})$, we write  $\Cl_{n,m}=\Cl_K$, $h_{n,m}=h_K$, $\co_{n,m}=\co_K$ and $E_{n,m}=E_K$ for simplicity. The group $A_{n,m}$ is the $\ell$-Sylow subgroup of $\Cl_{n,m}$.
	\subsection{Hilbert symbol}
	Let $n\geq 2$ be an integer.   Let $k$ be a finite extension of $\Q_p$ containing $\mu_n$. Let $\phi_k$ be the local reciprocity map $\phi_k: k^\times \longrightarrow \Gal(k^{\ab}/k)$. Given $a,b\in k^\times$, the $n$-th Hilbert symbol is defined by
	\[ \hilbert{a}{b}{k}_n=\frac{\phi_k(a)(\sqrt[n]{b})}{\sqrt[n]{b}} \in \mu_n\subset k. \]
	The following results about Hilbert symbol can be found in standard textbooks in number theory, for example \cite[Chapters IV and V]{Neu13}.
	\begin{prop}\label{prop: hil}
		Let $a,b\in k^\times$.

		$(1)$ $\hilbert{a}{b}{k}_n=1 \Leftrightarrow $ $a$ is a norm from the extension $k(\sqrt[n]{b})/k$;
		
		$(2)$ $\hilbert{aa'}{b}{k}_n=\hilbert{a}{b}{k}_n \hilbert{a'}{b}{k}_n$ and $\hilbert{a}{bb'}{k}_n=\hilbert{a}{b}{k}_n
		\hilbert{a}{b'}{k}_n$;
		
		$(3)$ $\hilbert{a}{b}{k}_n=\hilbert{b}{a}{k}^{-1}_n$;
		
		$(4)$ $\hilbert{a}{1-a}{k}_n=1$ and $\hilbert{a}{-a}{k}_n=1$;
		
		$(5)$ Let $\varpi$ be a uniformizer of $k$. Let $q=|{\co_k}/{(\varpi)}|$ be the cardinality of the residue field of $k$. If $p\nmid n$, then  $\hilbert{\varpi}{u}{k}_n=\omega(u)^{\frac{q-1}{n}}$ where $\omega: \co^\times_{k} \rightarrow \zeta_{q-1}$ is the unique map such that $u\equiv \omega(u) \bmod \varpi$ for $u\in \co^\times_k$.
		
		$(6)$ Let $M/k$ be a finite extension. For $a\in M^\times,b\in k^\times$, one has the following norm-compatible property
		\[ \hilbert{a}{b}{M}_n=\hilbert{\bfN_{M/k}(a)}{b}{k}_n. \]
	\end{prop}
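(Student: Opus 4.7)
The plan is to verify each of the six properties directly from the definition $\hilbert{a}{b}{k}_n = \phi_k(a)(\sqrt[n]{b})/\sqrt[n]{b}$, treating (1)--(4) as essentially formal consequences of local class field theory and Kummer theory, (5) as an explicit computation in the tame case, and (6) as a manifestation of the compatibility between the reciprocity maps of $k$ and $M$.

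First I would establish (1): the kernel of the map $k^\times\to \Gal(k(\sqrt[n]{b})/k)$ obtained by restricting $\phi_k$ and then acting on $\sqrt[n]{b}$ is precisely the norm group $\bfN_{k(\sqrt[n]{b})/k}\,k(\sqrt[n]{b})^\times$, by the local existence theorem. So $\hilbert{a}{b}{k}_n=1$ iff $a$ is a local norm from $k(\sqrt[n]{b})/k$. For (2), bilinearity in the first slot follows from $\phi_k$ being a homomorphism, and bilinearity in the second slot follows because $\sqrt[n]{b\cdot b'}$ can be written as $\sqrt[n]{b}\cdot\sqrt[n]{b'}$ up to an $n$-th root of unity (which $\phi_k(a)$ fixes since $\mu_n\subset k$). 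For (3), I would use the standard trick: apply (2) and (4) to $\hilbert{a(-b)}{b(-a)}{k}_n$ in two different expansions, or alternatively start from $\hilbert{-ab}{ab}{k}_n=1$ (which is (4) applied to $ab$) and expand both slots; this forces $\hilbert{a}{b}{k}_n\hilbert{b}{a}{k}_n=1$.

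For (4), the relation $\hilbert{a}{1-a}{k}_n=1$ is the Steinberg relation, which comes from the factorization $1-a=\prod_{\zeta^n=1}(1-\zeta\sqrt[n]{a})$, realizing $1-a$ as a norm from $k(\sqrt[n]{a})$; then (1) applies. The second identity $\hilbert{a}{-a}{k}_n=1$ follows by writing $-a = \frac{1-a}{1-a^{-1}}$ (when $a\neq 1$) and using (2) together with the Steinberg relation, or more directly by noting $-a$ is the norm of $-\sqrt[n]{a}$ up to an $n$-th power. For (5), assume $p\nmid n$ so that $k(\sqrt[n]{u})/k$ is unramified for $u\in \co_k^\times$ (after choosing a lift of the reduction). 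Then $\phi_k(\varpi)$ acts as the arithmetic Frobenius on $k(\sqrt[n]{u})$, i.e. it raises $\sqrt[n]{u}$ to the $q$-th power, and dividing by $\sqrt[n]{u}$ gives $u^{(q-1)/n} \bmod \varpi$, which is exactly $\omega(u)^{(q-1)/n}$.

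Finally for (6), the statement is a reformulation of the norm compatibility of the reciprocity maps: the diagram $\phi_M = \phi_k\circ \bfN_{M/k}$ commutes (modulo the inclusion $\Gal(k^{\ab}/k)\hookleftarrow \Gal(M^{\ab}/M)$ on the appropriate abelian subextension). Applied to the Kummer extension $k(\sqrt[n]{b})$ inside $M^{\ab}$, this yields $\phi_M(a)(\sqrt[n]{b})=\phi_k(\bfN_{M/k}a)(\sqrt[n]{b})$, which is the claimed identity. I do not expect a genuine obstacle here; the only step requiring care is (3), where one must keep track of signs and ensure that the Steinberg relation is correctly invoked in the presence of $\mu_n\subset k$ rather than just $\mu_2$. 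Since the authors explicitly cite \cite{Neu13} for these facts, the proof would in practice amount to quoting Chapters IV and V of that reference.
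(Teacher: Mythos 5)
Your proposal is correct, and it coincides with the paper's treatment: the authors give no proof of this proposition, simply citing Chapters~IV and~V of Neukirch, and your sketches (norm-group characterization for (1), bilinearity from the reciprocity homomorphism and Kummer theory for (2), the Steinberg relation and the $\hilbert{ab}{-ab}{k}_n=1$ trick for (3)--(4), the Frobenius computation in the tame case for (5), and functoriality of the reciprocity map for (6)) are exactly the standard arguments found there. No gaps.
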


	When $k=\R$, $\mu_n\subset \R$ if and only if $n=1$ or $2$.   For $a,b\in k^\times$ define
	\[ \hilbert{a}{b}{k}_2=\begin{cases}
	-1, & \text{ if } a<0\ \text{and}\ b<0;\\
	1, & \text{ otherwise}.\\
	\end{cases} \]

	When $k=\CC$, define $\hilbert{a}{b}{k}_n=1$ for any $a,b\in k^\times$.

	The following is the product formula of Hilbert symbols, see  \cite[Chapter VI, Theorem 8.1]{Neu13}.
	
	\begin{prop}Let $K$ be a number field such that $\mu_n\subset K$. For any place $v$ of $K$, set $\hilbert{a}{b}{v}_n:= \iota_v^{-1}\left(\hilbert{a}{b}{K_v}_n\right)$ where $\iota_v$ is the canonical embedding of $K\rightarrow K_v$. Then for
		$a,b \in K^\times$, one has
		\[ \prod_{v}\hilbert{a}{b}{v}_n=1, \]
		where $v$  runs over all places of  $K$.
	\end{prop}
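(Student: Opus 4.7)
The plan is to deduce the product formula from global class field theory, specifically from the Artin reciprocity law. First I would recall the definition: for each place $v$ of $K$, the local Hilbert symbol is
\[ \hilbert{a}{b}{v}_n = \frac{\phi_v(a)(\sqrt[n]{b})}{\sqrt[n]{b}}, \]
where $\phi_v = \phi_{K_v}$ is the local reciprocity map. (At archimedean places the definition is the obvious specialization, and is consistent with this formula via the local reciprocity of $\R$ and $\CC$.)

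Next I would check that the product on the left is in fact finite, so that the statement makes sense. Choose a finite set $S$ of places of $K$ containing all archimedean places, all places above $n$, and all finite places where $a$ or $b$ is not a unit. For $v\notin S$, the symbol $\hilbert{a}{b}{v}_n$ is a Hilbert symbol of units at a place coprime to $n$; by part (5) of Proposition~\ref{prop: hil} applied to $\hilbert{\varpi}{u}{v}_n$ and bimultiplicativity (part (2)), the symbol is trivial.  Hence only finitely many factors differ from $1$.

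Then I would assemble the local maps into the global reciprocity map $\phi_K\colon \mathbb{A}_K^\times/K^\times\to\Gal(K^{\ab}/K)$. The key input is Artin reciprocity: the diagonal image of $a\in K^\times$ inside the idele group lies in the kernel of $\phi_K$, i.e.\
\[ \prod_{v}\phi_v(a)=1\quad \text{in }\Gal(K^{\ab}/K). \]
Since $\mu_n\subset K$, the Kummer extension $K(\sqrt[n]{b})/K$ is abelian, so $\sqrt[n]{b}\in K^{\ab}$. Applying the identity above to $\sqrt[n]{b}$, dividing by $\sqrt[n]{b}$, and using that almost all factors are trivial, yields
\[ \prod_{v}\hilbert{a}{b}{v}_n \;=\;\frac{\prod_v \phi_v(a)(\sqrt[n]{b})}{\sqrt[n]{b}} \;=\; 1. \]

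The main obstacle is, of course, Artin reciprocity itself, which is the deep ingredient being imported; in a self-contained treatment one would either prove it for cyclotomic extensions and bootstrap to Kummer extensions via the standard reduction, or appeal directly to the reference \cite[Chapter VI]{Neu13}. The remaining work—compatibility of the local symbol with the global reciprocity map, finiteness of the product, and the archimedean cases—is largely bookkeeping once the local definitions via $\phi_v$ are fixed.
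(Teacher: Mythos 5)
Your argument is correct and is the standard deduction from Artin reciprocity; the paper itself gives no proof but delegates to \cite[Chapter VI, Theorem 8.1]{Neu13}, where essentially this same argument (global reciprocity kills principal ideles, applied to $\sqrt[n]{b}\in K^{\ab}$) is carried out. One minor point of bookkeeping: for the finiteness of the product, the triviality of $\hilbert{u}{w}{v}_n$ for two units $u,w$ at a place $v\nmid n$ is cleanest to see from the fact that $K_v(\sqrt[n]{w})/K_v$ is unramified and units are norms from unramified extensions, since Proposition~\ref{prop: hil}(5) together with bimultiplicativity does not by itself cover the unit--unit case.
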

	\subsection{Three useful Lemmas}
	
	\begin{lem}\label{nakayama}
		Suppose  $K/F$ is a cyclic $\ell$-extension with Galois group $G$ and $C$ is a $G$-submodule of $\Cl_K$.  Then  $\ell \nmid |(\Cl_{K}/C)^G|$ implies that  $\Cl_K(\ell)=C(\ell)$. In particular, $\ell \nmid |\Cl_{K}^G|$ implies that $\ell \nmid h_K$.
	\end{lem}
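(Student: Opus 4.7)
The plan is to reduce the statement to a standard orbit-counting argument for a finite $\ell$-group acting on a finite $\ell$-group, together with the observation that taking $\ell$-Sylow subgroups commutes with taking $G$-fixed points.

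Set $M=\Cl_K/C$, which is a finite $\Z[G]$-module. First I would note that for any finite abelian group on which $G$ acts, the $\ell$-Sylow subgroup is $G$-stable and $M(\ell)^G=(M^G)(\ell)$. Hence the hypothesis $\ell\nmid |M^G|$ is equivalent to $M(\ell)^G=0$. Now $G$ is a cyclic $\ell$-group acting on the finite abelian $\ell$-group $M(\ell)$, so by decomposing $M(\ell)$ into $G$-orbits we obtain the congruence
\[
|M(\ell)| \equiv |M(\ell)^G| \pmod{\ell},
\]
because every non-fixed orbit has size divisible by $\ell$. Combined with $M(\ell)^G=0$, this forces $M(\ell)=0$, i.e.\ $\ell\nmid [\Cl_K:C]$.

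From $\ell\nmid [\Cl_K:C]$ it follows that the $\ell$-Sylow subgroup $\Cl_K(\ell)$ of $\Cl_K$ must be contained in $C$, and therefore in $C(\ell)$. Since the reverse inclusion $C(\ell)\subseteq \Cl_K(\ell)$ is automatic from $C\subseteq \Cl_K$, we conclude $\Cl_K(\ell)=C(\ell)$. The ``in particular'' statement is then the special case $C=0$: if $\ell\nmid |\Cl_K^G|$, the argument above yields $\Cl_K(\ell)=0$, hence $\ell\nmid h_K$.

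There is no real obstacle here; the only thing to be careful about is to pass to $\ell$-parts \emph{before} invoking the orbit-counting congruence, since that congruence is only informative for $\ell$-group actions on $\ell$-groups, while $M$ itself can a priori have prime-to-$\ell$ torsion on which $G$ acts without fixed points.
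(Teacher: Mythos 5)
Your proof is correct and follows essentially the same route as the paper: both pass to the $\ell$-part of $\Cl_K/C$ and apply the orbit-counting congruence $|M(\ell)|\equiv|M(\ell)^G|\bmod\ell$ for the cyclic $\ell$-group $G$. The only cosmetic difference is at the end, where the paper invokes the exact sequence $0\to C(\ell)\to\Cl_K(\ell)\to(\Cl_K/C)(\ell)$ while you argue directly from $\ell\nmid[\Cl_K:C]$; these are equivalent.
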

	
	\begin{proof}
		
		Consider the  action of  $G$ on $(\Cl_K/C)(\ell)$. The cardinality of the orbit of  $c \in (\Cl_K/C)(\ell)\setminus (\Cl_K/C)(\ell)^G$ is a multiple of $\ell$.  Thus  $|(\Cl_K/C)(\ell)|\equiv |(\Cl_K/C)(\ell)^G|  \bmod \ell$.   Hence  $\ell \nmid |(\Cl_{K}/C)^G|$ implies $(\Cl_K/C)(\ell)=0$ and then $\Cl_K(\ell)=C(\ell)$ by  the  exact sequence $0 \rightarrow C(\ell) \rightarrow \Cl_K(\ell)\rightarrow (\Cl_K/C)(\ell)$. \end{proof}

	\begin{lem}\label{lem: ram}
		Let $K_n/K_0$ be a cyclic extension of number fields of degree $\ell^n$.  Let $K_i$ be the unique intermediate field  such that $[K_i:K_0]=\ell^i$ for $0\leq i \leq n$.     If a prime ideal $\fp$ of $K_0$ is ramified in $K_1/K_0$, then $\fp$   is totally ramified in $K_n/K_0$.
	\end{lem}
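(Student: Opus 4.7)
The plan is to reduce the claim to the subgroup structure of the cyclic $\ell$-group $G = \Gal(K_n/K_0)$. Since $G$ is cyclic of prime-power order $\ell^n$, its subgroups form a single chain
\[
1 = H_0 \subsetneq H_1 \subsetneq \cdots \subsetneq H_n = G, \qquad |H_i| = \ell^i,
\]
and under the Galois correspondence $H_{n-i} = \Gal(K_n/K_i)$. In particular $H_{n-1}$ is the \emph{unique} maximal proper subgroup of $G$, so every proper subgroup of $G$ is contained in $H_{n-1}$.

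Next I would fix a prime $\fP$ of $K_n$ above $\fp$ and let $I_{\fP} \le G$ denote its inertia subgroup. By the chain structure above, $I_{\fP} = H_k$ for some $0 \le k \le n$. The inertia group of $\fP_1 := \fP \cap K_1$ in the extension $K_1/K_0$ is the image of $I_{\fP}$ under the restriction map $G \twoheadrightarrow G/H_{n-1} = \Gal(K_1/K_0)$. The hypothesis that $\fp$ is ramified in $K_1/K_0$ means this image is nontrivial, i.e.\ $H_k \not\subseteq H_{n-1}$. Combined with the chain property, this forces $k = n$, so $I_{\fP} = G$ and $e(\fP/\fp) = \ell^n = [K_n : K_0]$. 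Since $\fp$ is unramified outside $\fP$ by the usual transitivity, $\fp$ is totally ramified in $K_n/K_0$.

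There is essentially no obstacle here: the proof is purely group-theoretic once one knows inertia groups compose correctly under towers. The only point worth flagging is that the statement restricts to prime ideals (finite places), so I need not worry about the separate conventions for archimedean places introduced in the preliminaries. If desired, one can phrase the final step equivalently via the multiplicativity $e(\fP/\fp) = e(\fP/\fP_1)\,e(\fP_1/\fp)$: the factor $e(\fP_1/\fp)$ equals $\ell$ by hypothesis, and the total degree $\ell^n$ forces every intermediate ramification index to be $\ell$ as well, by the same chain argument applied to each step $K_{i+1}/K_i$.
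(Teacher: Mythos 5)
Your proof is correct and is essentially the paper's argument: both identify the inertia subgroup of a prime above $\fp$ inside the cyclic $\ell$-group $G$, use the fact that the subgroups of $G$ form a single chain (equivalently, that the intermediate fields are exactly the $K_i$), and conclude from ramification in $K_1/K_0$ that the inertia subgroup cannot lie in the unique maximal subgroup, hence is all of $G$. The paper phrases this via the fixed field $K_n^{I_\fp}$ being unramified over $K_0$ and therefore equal to $K_0$, while you phrase it via the image of $I_\fP$ in $\Gal(K_1/K_0)$; these are the same argument in dual form.
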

	
	\begin{proof}
		Let $I_\fp$ be the inertia group  of $\fp$. Then $K^{I_\fp}_n=K_i$ for some $i$ and  $K^{I_\fp}_n/K$ is unramified at $\fp$.   Since $K_1/K_0$ is ramified at $\fp$, we must have $K^{I_\fp}_n=K_0$. In other words, $\fp$ is totally ramified. \end{proof}

	\begin{lem}\label{prop: norm_surj}
		Suppose the number field extension $M/K$  contains no unramified abelian sub-extension other than $K$. Then the norm map $\Cl_M \rightarrow \Cl_K$ is surjective. In particular, $h_K\mid h_M$.
	\end{lem}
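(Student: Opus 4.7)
The plan is to invoke class field theory via the Hilbert class field $H_K$ of $K$, which satisfies $\Gal(H_K/K)\cong \Cl_K$ under the Artin map. (If ``unramified'' in the hypothesis is meant to include archimedean places, one takes $H_K$ to be the narrow Hilbert class field and descends to $\Cl_K$ at the end via the natural surjection from the narrow class group; the argument is otherwise unaffected.)

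First I would form $L:=M\cap H_K$, viewed as an intermediate field of $M/K$. Being contained in $H_K$, the extension $L/K$ is abelian and unramified, so the hypothesis forces $L=K$. Consequently $M$ and $H_K$ are linearly disjoint over $K$, and $[MH_K:M]=[H_K:K]=h_K$. Moreover $MH_K/M$ is abelian unramified, since base change preserves both properties.

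Now apply Artin reciprocity to $M$: the Artin map for the abelian unramified extension $MH_K/M$ yields a surjection $\Cl_M \twoheadrightarrow \Gal(MH_K/M)$, while restriction to $H_K$ furnishes the canonical isomorphism $\Gal(MH_K/M)\xrightarrow{\sim}\Gal(H_K/K)\cong\Cl_K$. The standard compatibility of Artin maps with norms identifies this composite with $\bfN_{M/K}:\Cl_M\to\Cl_K$, so the norm is surjective and $h_K\mid h_M$ follows. I do not expect any genuine obstacle here; the lemma is essentially a textbook consequence of class field theory, and the only mild subtlety is the convention about ramification at infinite places, addressed above.
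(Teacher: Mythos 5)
Your argument is correct and is exactly the standard proof of the result the paper cites here without proof (Washington, Theorem 10.1): the hypothesis forces $M\cap H_K=K$, whence $\Gal(MH_K/M)\cong\Gal(H_K/K)\cong\Cl_K$, and the norm-compatibility of the Artin map identifies the composite $\Cl_M\twoheadrightarrow\Gal(MH_K/M)\xrightarrow{\sim}\Cl_K$ with $\bfN_{M/K}$. Only note that under the paper's conventions ``unramified'' already includes the archimedean places, and in that case the \emph{ordinary} Hilbert class field is precisely the right object (all of its subextensions are unramified in that sense, so the hypothesis directly gives $M\cap H_K=K$); thus your main line applies verbatim, and the parenthetical detour through the narrow class field is both unnecessary and oriented the wrong way, since the hypothesis would not force $M\cap H_K^{+}=K$.
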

	\begin{proof} This is \cite[Theorem 10.1]{Was97}.
	\end{proof}
	
	\subsection{Gras' formula on  class groups in cyclic extensions}
	
	\begin{thm}[Gras]\label{Gras's thm}
		Let $K/F$ be a cyclic extension of number fields with Galois group $G$.
		Let $C$ be a $G$-submodule of $\Cl_K$. Let $D$ be a subgroup of fractional ideals of $K$  such that $\cl(D)=C$. Denote by $\Lambda_D=\{x\in F^\times\mid (x)\co_F \in \bfN D\}$. Then
		\begin{equation} \label{eq:Gras} |(\Cl_K/C)^G|=\frac{|\Cl_F|}{|\bfN C|}\cdot \frac{\prod_{v}e_v}{[K:F]}\cdot \frac{1}{[\Lambda_D:\Lambda_D\cap \bfN K^\times]},\end{equation}
		where the product runs over all places of $F$.
	\end{thm}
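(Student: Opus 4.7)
The plan is to mimic Chevalley's classical proof of the ambiguous class number formula, replacing the principal ideal group $P_K$ by the enlarged $G$-stable subgroup $J := P_K \cdot D$ (so that $\Cl_K/C = I_K/J$) and replacing the unit group $E_F$ by $\Lambda_D$, then tracking the three resulting factors through the standard cohomological machinery.

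First, apply the Galois-cohomology long exact sequence to $0 \to J \to I_K \to \Cl_K/C \to 0$. Since the ideal group $I_K = \bigoplus_v \Z[G/D_v]$ is a permutation $G$-module over the places $v$ of $F$ (with $D_v$ a decomposition group), Shapiro's lemma gives $H^1(G, I_K) = 0$, hence
\[0 \to J^G \to I_K^G \to (\Cl_K/C)^G \to H^1(G, J) \to 0.\]
The classical ramification count, carried out prime by prime using $\fp\co_K = (\fP_1\cdots\fP_{g_v})^{e_v}$, yields $|I_K^G/I_F| = \prod_v e_v$. Pushing the diagram through the norm $\bfN_{K/F}\colon I_K \to I_F$ and comparing with the quotient $\Cl_F/\bfN C$ produces the factors $|\Cl_F|/|\bfN C|$ and $\prod_v e_v/[K:F]$, the $[K:F]$ arising because the norm composed with the inclusion $I_F \hookrightarrow I_K$ is multiplication by $[K:F]$ on $I_F$.

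The delicate step is to identify the remaining cohomological discrepancy with the single index $[\Lambda_D : \Lambda_D \cap \bfN K^\times]$. Introduce the auxiliary group $\Lambda'_D := \{\alpha \in K^\times : (\alpha) \in D\}$, which fits into $0 \to E_K \to \Lambda'_D \to D \cap P_K \to 0$. A snake-lemma chase, crossing this sequence with $0 \to \Lambda'_D \to K^\times \to K^\times/\Lambda'_D \to 0$ and using Hilbert 90 ($H^1(G, K^\times) = 0$) to eliminate the $K^\times$-term, reduces the $H^1(G, J)$-versus-$J^G$ defect to $\Lambda_D/(\Lambda_D \cap \bfN K^\times)$: any $\bfN\alpha$ with $\alpha \in \Lambda'_D$ lies automatically in $\Lambda_D \cap \bfN K^\times$, and every class in $\Lambda_D/(\Lambda_D \cap \bfN K^\times)$ contributes nontrivially to the snake kernel. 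Assembling the three pieces yields \eqref{eq:Gras}.

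The main obstacle will be the bookkeeping in this last step: verifying that the snake lemma, applied simultaneously to $\Lambda'_D$, $K^\times$, $J$, and $I_K/J$, leaves no stray contributions from $H^2(G, E_K)$ or from ambiguous-principal-ideal complications beyond what is absorbed into $\Lambda_D \cap \bfN K^\times$. This is the exact analogue of the classical identification $H^1(G, E_K) \cong E_F/(E_F \cap \bfN K^\times)$ in Chevalley's proof, now carried out with $\Lambda'_D$ replacing $E_K$ and $\Lambda_D$ replacing $E_F$.
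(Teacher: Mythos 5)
The paper does not actually prove this theorem: it is quoted from Gras, with the proof delegated to the references [Gra73, Ch.~IV] and [Gra17, \S 3] and only the remark that Gras's argument is built on Chevalley's formula. Your strategy --- replace $P_K$ by the full preimage $J=P_K\cdot D=\cl^{-1}(C)$, so that $\Cl_K/C=I_K/J$, and rerun Chevalley's cohomological computation with $\Lambda'_D$ playing the role of $E_K$ --- is the right one and is essentially Gras's. The opening moves are fine: $H^1(G,I_K)=0$ by Shapiro, $[I_K^G:I_F]=\prod_{v\nmid\infty}e_v$, and the factor $|\Cl_F|/|\bfN C|=[I_F:P_F\,\bfN D]$ does fall out of comparing $I_K^G/J^G$ with $I_F/P_F$. (One point you must address: the hypotheses do not make $D$, hence $\Lambda'_D$, a $G$-module, so your cohomology of $\Lambda'_D$ is not defined as stated; either assume $D$ is $G$-stable or first reduce to that case via the independence of the index from the choice of $D$.)

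The genuine gaps are in the second half. First, the factor $[K:F]$ in the denominator and the archimedean $e_v$ in the numerator do not arise because ``norm composed with inclusion is multiplication by $[K:F]$ on $I_F$''; in Chevalley's proof they come from the Herbrand quotient of the unit group, $Q(E_K)=\prod_{v\mid\infty}e_v/[K:F]$, a computation your sketch never performs --- indeed the archimedean places never enter your argument at all, so the formula cannot come out right as written. Second, the ``classical identification $H^1(G,E_K)\cong E_F/(E_F\cap\bfN K^\times)$'' that you take as the model for the delicate step is not a true statement: for $\Q(\sqrt{2})/\Q$ the left-hand side is $\Z/2\Z$ while the right-hand side is trivial, since $-1=\bfN(1+\sqrt{2})$. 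The correct classical facts are $P_K^G/P_F\cong H^1(G,E_K)$ and $H^1(G,P_K)\cong(E_F\cap\bfN K^\times)/\bfN E_K$, and the index $[E_F:E_F\cap\bfN K^\times]$ only emerges after these are combined through $|H^1(G,E_K)|=[E_F:\bfN E_K]/Q(E_K)$. The analogue you need is this three-step combination with $\Lambda'_D$ in place of $E_K$ and $\Lambda_D$ in place of $E_F$, where now $Q(\Lambda'_D)$ differs from $Q(E_K)$ by a contribution of $D\cap P_K$ that must be shown to cancel against the corresponding change in $[I_K^G:J^G]$. Since your roadmap for exactly this hardest step points at a false statement and omits the Herbrand-quotient computation that drives it, the proposal does not yet constitute a proof.
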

	
	\begin{proof}
		See \cite[Section 3]{Gra17} or \cite[Chapter IV]{Gra73}. Gras proved the theorem  for (narrow) ray class groups, but his proof works for class groups.
	\end{proof}
	
	\begin{rmk}
		$(1)$ The index $[\Lambda_D:\Lambda_D\cap \bfN K^\times]$ is independent of the choice of $D$.
		
		$(2)$ Take  $C=\{1\}$ and $D=\{1\}$, then  $\Lambda_D$ is the unit group $E_F$, and  Gras' formula is nothing but the ambiguous class number formula of Chevalley:
		\begin{equation}\label{chevalley}
		|\Cl_K^G|=|\Cl_F|\cdot\frac{\prod_{v}e_v}{[K:F]}\cdot\frac{1}{[E_F:E_F\cap \bfN K^\times]}.
		\end{equation}
		In fact the proof of Gras' formula is based on Chevalley's formula, whose proof can be found in \cite[Chapter 13, Lemma 4.1]{Lang90}.
	\end{rmk}

	One can use Hilbert symbols to compute  the index $[\Lambda_D:\Lambda_D\cap \bfN K^\times]$.

	\begin{lem}\label{lem: hasse_norm} Let $F$ be a number field and $\mu_d\subset F$. Assume $K=F(\sqrt[d]{a})$ is a Kummer extension of $F$ of degree $d$. Let $D$ be any subgroup of the group of fractional ideals of $K$ and $\Lambda_D=\{x\in F^\times\mid (x)\co_F \in \bfN D\}$.
		Define
		\[\rho=\rho_{D, K/F}:  \Lambda_D \longrightarrow \prod_{v}\zeta_{d},\quad x \mapsto \left(\hilbert{x}{a}{v}_d\right)_{v},\]
		where $v$ passes through all places of $F$ ramified in $K/F$. Then
		
		$(1)$ $\Ker(\rho) = \Lambda_D\cap \bfN K^\times$. In particular, $[\Lambda_D:\Lambda_D\cap \bfN K^\times]=|\rho(\Lambda_D)|$.
		
		$(2)$  Let $\Pi$ be the product map $\prod_{v}\mu_{d} \rightarrow \mu_d$, then $\Pi\circ \rho=1$ and hence
		$\rho(\Lambda_D)\subset \ker \Pi:= (\prod_{v }\zeta_{d})^{\Pi=1}$.
		
		$(3)$ $\Ker(\rho)$ and $|\rho(\Lambda_D)|$ are independent of the choice of $a$.
	\end{lem}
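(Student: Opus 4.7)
The plan is to deduce (1) from Hasse's norm theorem for the cyclic extension $K/F$, combined with the observation that at every place unramified in $K/F$ the Hilbert symbol $\hilbert{x}{a}{v}_d$ is automatically trivial on $\Lambda_D$. Once (1) is in place, (2) is an immediate consequence of the Hilbert symbol product formula, and (3) follows formally from multiplicativity.

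For (1), since $K/F$ is cyclic of degree $d$, the Hasse norm theorem identifies $\bfN K^\times$ with the set of $x\in F^\times$ that are local norms at every place $v$ of $F$; by Proposition~\ref{prop: hil}(1) this condition is equivalent to $\hilbert{x}{a}{v}_d=1$ at each $v$. It therefore suffices to show that $\hilbert{x}{a}{v}_d=1$ for every $x\in\Lambda_D$ and every $v$ unramified in $K/F$. For archimedean unramified $v$ one has $K_w=F_v$ and the claim is trivial. For finite unramified $v$, the extension $K_w/F_v$ is unramified of degree $f=f_{w/v}$ and its local norm group equals $\co^\times_{F_v}\cdot\varpi^{f\Z}$, so ``$x$ is a local norm at $v$'' is equivalent to $f\mid v(x)$. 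Writing $(x)\co_F=\bfN_{K/F}(\mathfrak b)$ with $\mathfrak b\in D$, and using that all local degrees above $v$ coincide since $K/F$ is Galois, the ideal-norm formula gives
\[
v(x)=\sum_{w\mid v}f_{w/v}\,v_w(\mathfrak b)=f\sum_{w\mid v}v_w(\mathfrak b),
\]
so $f\mid v(x)$ and we are done.

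For (2), the product formula yields $\prod_v\hilbert{x}{a}{v}_d=1$ over all places of $F$. By (1), the factors at unramified $v$ are $1$ for every $x\in\Lambda_D$, so the remaining product over ramified $v$, which is precisely $\Pi(\rho(x))$, equals $1$. For (3), any other degree-$d$ Kummer generator of the same extension has the form $a'=a^jc^d$ with $\gcd(j,d)=1$ and $c\in F^\times$; Proposition~\ref{prop: hil}(2) gives $\hilbert{x}{a'}{v}_d=\hilbert{x}{a}{v}_d^j$ (the $c^d$ factor drops out), and since raising to a coprime power is a bijection on $\mu_d$ while the set of ramified places depends only on $K/F$, both $\ker\rho$ and $|\rho(\Lambda_D)|$ are independent of $a$.

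The only real technical point is the unramified vanishing in (1); wild primes $v\mid d$ do not require a separate argument because the criterion ``local norm at an unramified place iff $f_{w/v}\mid v(x)$'' holds uniformly in the residue characteristic, so once the ideal-theoretic divisibility is secured everything else is formal.
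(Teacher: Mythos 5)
Your proof is correct and follows essentially the same route as the paper: reduce (1) to Hasse's norm theorem for the cyclic extension $K/F$ by showing that every $x\in\Lambda_D$ is a local norm at each unramified place, then get (2) from the product formula and (3) from multiplicativity. The only cosmetic differences are that you verify the finite unramified case by the divisibility $f_{w/v}\mid v(x)$ rather than by writing $x$ as a unit times a local norm and invoking that units are norms in unramified extensions, and that you prove (3) by comparing Kummer generators $a'=a^jc^d$ instead of simply noting that $\Ker(\rho)=\Lambda_D\cap\bfN K^\times$ depends only on $K/F$; both variants are sound.
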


	\begin{proof}
		Let $I_K$ be the group of fraction ideals of $K$. Note that if $D\subset I_K$,   then $\Lambda_D\subset \Lambda:=\Lambda_{I_K}$.  Therefore it suffices to prove the results in the case $D=I_K$.
		
		(1) For $v$ a place of $F$, let $w$ be a place of $K$ above $v$. Recall that  $\hilbert{x}{a}{v}_d=1$ if and only if $x\in \bfN_{K_w/F_v}(K^\times_w)$.
		We claim that if $v$ is unramified, then $x\in \bfN_{K_w/F_v}(K^\times_w)$ for $x\in \Lambda$. Suppose $v$ is an infinite unramified place. Then $F_v=K_w$ and clearly $x\in \bfN_{K_w/F_v}(K^\times_w)$.  Suppose $v$ is a  finite  unramified place.
		Since  $x\in \Lambda$,  we have $(x)\co_F=\bfN(I)$. Then  locally $(x)\co_{F_v}=\bfN_{K_w/F_v}(J)$ for some  fractional ideal $J$ of $\co_{K_w}$. Since $\co_{K_w}$ is a principal ideal domain, $J=(\alpha)$ for some $\alpha \in K^\times_w$.  Hence $x=u\bfN_{K_w/F_v}(\alpha)$   with $u\in \co^\times_{F_v}$. Since $v$ is unramified, we have $u\in \bfN_{K_w/F_v}(K^\times_w)$ by local class field theory.  Therefore $x\in \bfN_{K_w/F_v}(K^\times_w)$.
		
		Now for $x\in \Ker (\rho)$, we have $x\in \bfN_{K_w/F_v}(K^\times_w)$ for every place $v$ of $F$.  Hasse's norm theorem \cite[Chapter VI, Corollary 4.5]{Neu13} gives $x\in \bfN K^\times$. So $\Ker (\rho) \subset \Lambda \cap \bfN K^\times$.   The other direction is clear.  This proves (1).

		(2) We have  proved that if $v$ is unramified, then $\hilbert{x}{a}{v}_d=1$ for $x\in \Lambda$. Therefore (2) follows from the product formula of Hilbert symbols.
		
		(3) is a consequence of (1).\end{proof}
	

	\section{Stability of $\ell$-class groups}
	
	We now give a stationary result about $\ell$-class groups in a finite cyclic $\ell$-extension. We first introduce the  ramification hypothesis $\RamHyp$. Let $F$ be a number field and $K$  an algebraic extension  (possibly infinite) of $F$. Then $K/F$ satisfies the  ramification hypothesis $\RamHyp$ if
	\begin{quote}
		Every place of $K$ ramified in $K/F$ is totally ramified in $K/F$ and there is at least one prime ramified in $K/F$. \end{quote}

	\begin{lem}
		Let $G$ be a  finite $\ell$-cyclic group with generator $\sigma$. Then $\Z_{\ell}[G]$ is a local ring with maximal ideal $(\ell,\sigma-1)$. 
	\end{lem}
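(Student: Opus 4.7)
The plan is to exploit the $\ell$-adic completeness of $\Z_\ell[G]$ in order to reduce the question to the corresponding statement for the group algebra $\F_\ell[G]$, where the nilpotence of $\sigma - 1$ makes the localness transparent.

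First I would note that since $G$ is finite, $\Z_\ell[G]$ is a free $\Z_\ell$-module of finite rank $\ell^n = |G|$, and in particular it is $\ell$-adically complete. Consequently, $\ell$ lies in the Jacobson radical of $\Z_\ell[G]$ (any $1 - \ell\alpha$ admits the geometric series $\sum_{k \ge 0}(\ell\alpha)^k$ as an inverse). It follows by a standard lifting argument that the maximal ideals of $\Z_\ell[G]$ are in bijection with those of $\Z_\ell[G]/\ell\Z_\ell[G] \cong \F_\ell[G]$; in particular, $\Z_\ell[G]$ is local if and only if $\F_\ell[G]$ is, and the unique maximal ideal of $\Z_\ell[G]$ is the preimage of the unique maximal ideal of $\F_\ell[G]$.

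Next I would show directly that $\F_\ell[G]$ is local with maximal ideal $(\sigma - 1)$. Because $G$ is cyclic of order $\ell^n$ and we are in characteristic $\ell$, the Frobenius-style identity
\[
(\sigma - 1)^{\ell^n} = \sigma^{\ell^n} - 1 = 0
\]
holds in $\F_\ell[G]$, so $\sigma - 1$ is nilpotent and the ideal $(\sigma - 1)$ is a nil ideal. The augmentation map $\F_\ell[G] \to \F_\ell$ sending $\sigma \mapsto 1$ identifies $\F_\ell[G]/(\sigma - 1) \cong \F_\ell$, which is a field, so $(\sigma - 1)$ is a maximal ideal. Any element $\alpha \in \F_\ell[G]$ with nonzero augmentation satisfies $\alpha = u(1 - \beta)$ with $u \in \F_\ell^\times$ and $\beta \in (\sigma-1)$ nilpotent, hence is a unit. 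Therefore $(\sigma-1)$ is the unique maximal ideal of $\F_\ell[G]$.

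Pulling back through the surjection $\Z_\ell[G] \twoheadrightarrow \F_\ell[G]$, the unique maximal ideal of $\Z_\ell[G]$ is the preimage of $(\sigma - 1) \subset \F_\ell[G]$, which is exactly $(\ell, \sigma - 1)$. There is no serious obstacle here; the only point requiring a moment's care is the reduction step, which rests on the (routine) fact that $\ell$-adic completeness places $\ell$ in the Jacobson radical so that maximal ideals correspond under reduction mod $\ell$.
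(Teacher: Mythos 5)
Your proof is correct. The overall skeleton coincides with the paper's: both arguments come down to showing that every maximal ideal of $\Z_\ell[G]$ contains $\ell$, and then identifying $\F_\ell[G]\cong\F_\ell[T]/(T-1)^{\ell^n}$ as a local ring with maximal ideal generated by $\sigma-1$. Where you genuinely diverge is in the first step. The paper argues by contraction: a maximal ideal $\fm$ meets $\Z_\ell$ in a prime ideal, and if that contraction were zero then $\fm$ would come from a maximal ideal $(f(T))$ of $\Q_\ell[T]/(T^{\ell^n}-1)$ with $f$ a monic irreducible factor of $T^{\ell^n}-1$; Gauss's lemma puts $f$ in $\Z_\ell[T]$, and $\Z_\ell[T]/(f)$ is integral over $\Z_\ell$ hence not a field — a contradiction. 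You instead invoke $\ell$-adic completeness to place $\ell$ in the Jacobson radical via the geometric series, which immediately forces $\ell$ into every maximal ideal. Your route is shorter and avoids Gauss's lemma and the integrality argument, at the cost of appealing to completeness of the finite free $\Z_\ell$-module $\Z_\ell[G]$ (which is routine); it also generalizes verbatim to $R[G]$ for any complete local ring $R$ with residue characteristic $\ell$. Both are perfectly sound; the paper's version is more elementary in the sense of using only basic commutative algebra over $\Z_\ell$.
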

	
	\begin{proof}
		Note that $\Z_{\ell}[G]\cong \Z_\ell[T]/(T^{\ell^n}-1)$ by sending $\sigma$ to $T$, where $\ell^n$ is the order of $G$. Let $\fm$ be a maximal ideal of $\Z_\ell[T]/(T^{\ell^n}-1)$. Then $\fm\cap \Z_{\ell}$ is a prime ideal of $\Z_\ell$. We claim that $\fm\cap \Z_\ell=\ell\Z_{\ell}$. 
		
		Otherwise $\fm\cap \Z_\ell=0$,  namely $\fm$ is disjoint with the multiplicative subset $\Z_{\ell}\setminus \{0\}$. Then $\fm$ corresponds to a prime ideal of the the ring $\Q_\ell[T]/{(T^{\ell^n}-1)}$. Each prime ideal of $\Q_\ell[T]/{(T^{\ell^n}-1)}$ is generated by a monic   irreducible polynomial $f(T)$ with $f(T)\mid T^{\ell^n}-1$.  By Gauss's lemma, $f(T)$ has $\Z_{\ell}$-coefficients.  Then $\fm=(f(T))$. But $\Z_{\ell}[T]/(f(T))$ is not a field since $\Z_{\ell}[T]/(f(T))$ is integral over $\Z_{\ell}$ and $\Z_\ell$ is not a field. So $\fm\cap \Z_{\ell}=\ell \Z_{\ell}$.
		
		Then $\fm$ corresponds to a maximal ideal of $\F_{\ell}[T]/(T^{\ell^n}-1)=\F_{\ell}[T]/(T-1)^{\ell^n}$. The latter is obviously a local ring with maximal ideal $(T-1)$. Hence $\fm= (\ell,T-1)$.  Therefore the maximal ideal of $\Z_{\ell}[G]$ is $(\ell,\sigma-1)$. \end{proof}
	
	\begin{prop}\label{prop:stable_theorem}
		Let $K_2/K_0$ be a cyclic extension of number fields of degree $\ell^2$ satisfying  $\RamHyp$. Let $K_1$ be the unique nontrivial intermediate field of  $K_2/K_0$.   Then for any $n\ge 1$,
		\[|\Cl_{K_0}/\ell^n\Cl_{K_0}|=|\Cl_{K_1}/\ell^n\Cl_{K_1}|\]
		implies that
		\[ \Cl_{K_2}/{\ell^n \Cl_{K_2}}\cong \Cl_{K_1}/{\ell^n \Cl_{K_1}}\cong \Cl_{K_0}/{\ell^n \Cl_{K_0}}.\]
		In particular, $|\Cl_{K_0}(\ell)|=|\Cl_{K_1}(\ell)|$ implies that $\Cl_{K_0}(\ell)\cong \Cl_{K_1}(\ell)\cong \Cl_{K_2}(\ell)$.
	\end{prop}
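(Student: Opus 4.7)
Set $G=\Gal(K_2/K_0)=\langle\sigma\rangle$, $H=\Gal(K_2/K_1)=\langle\sigma^\ell\rangle$, $\tau=\sigma\bmod H$, and write $M_i:=\Cl_{K_i}/\ell^n\Cl_{K_i}$. The plan is to use $\RamHyp$ to force norm surjectivity at every floor of the tower, convert the cardinality hypothesis into triviality of the $\Gal(K_1/K_0)$-action on $M_1$, and then propagate this one floor up via a Nakayama argument over the local ring $\Z_\ell[G]$ whose maximal ideal $(\ell,\sigma-1)$ was identified in the preceding lemma.

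First I would check that $\RamHyp$ is inherited by every sub-extension of $K_2/K_0$: by Lemma~\ref{lem: ram}, a prime ramified in $K_2/K_0$ is totally ramified in both $K_1/K_0$ and $K_2/K_1$, so neither of these admits an unramified abelian subextension other than the base, and Proposition~\ref{prop: norm_surj} supplies a chain of surjective norms $\bfN\colon\Cl_{K_2}\twoheadrightarrow\Cl_{K_1}\twoheadrightarrow\Cl_{K_0}$. Passing to $\ell^n$-quotients this gives surjections $M_2\twoheadrightarrow M_1\twoheadrightarrow M_0$. Combined with $|M_0|=|M_1|$, the second surjection is an isomorphism $\bfN\colon M_1\xrightarrow{\sim}M_0$. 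Because the norm annihilates every element of the form $(\tau-1)c$ (we have $\bfN(\tau c)=\bfN(c)$), its injectivity on $M_1$ forces $(\tau-1)M_1=0$, i.e.\ $\Gal(K_1/K_0)$ acts trivially on $M_1$.

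It remains to prove $|M_2|=|M_1|$, for then $\bfN\colon M_2\twoheadrightarrow M_1$ is automatically an isomorphism and all three $M_i$ are abstractly isomorphic; the ``in particular'' statement will then follow by taking $n$ large enough to annihilate every $\ell$-class group. For this I would view $M_2$ as a module over the local ring $\Lambda=\Z_\ell[G]$ with maximal ideal $\fm=(\ell,\sigma-1)$ and invoke Nakayama: the minimal number of $\Lambda$-generators of $M_2$ equals $\dim_{\F_\ell}M_2/\fm M_2=\dim_{\F_\ell}(M_2)_G/\ell(M_2)_G$, where $(M_2)_G=M_2/(\sigma-1)M_2$. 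The arithmetic norm factors through a surjection $(M_2)_G\twoheadrightarrow M_0$, and Chevalley's ambiguous class number formula~\eqref{chevalley}, applied successively to $K_1/K_0$ and $K_2/K_0$, shows that the triviality of $\tau$ on $M_1$ is equivalent (via Lemma~\ref{lem: hasse_norm}) to the unit-index $[E_{K_0}:E_{K_0}\cap\bfN_{K_1/K_0}K_1^\times]$ attaining its extremal value. The norm-compatibility of Hilbert symbols in Proposition~\ref{prop: hil}(6), combined with the total ramification supplied by $\RamHyp$, then forces the unit-index $[E_{K_0}:E_{K_0}\cap\bfN_{K_2/K_0}K_2^\times]$ to be maximal as well, so that the surjection $(M_2)_G\twoheadrightarrow M_0$ is already an isomorphism. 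From $M_2/\fm M_2\cong M_0/\ell M_0$, together with the $\ell^n$-annihilation of $M_2$ and a short analysis of the residual $H$-action, one concludes $|M_2|\le|M_0|=|M_1|$.

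\textbf{Main obstacle.} The technical heart lies in this last step: converting the Nakayama bound on generators of $M_2$ into the sharp inequality $|M_2|\le|M_1|$. This requires a careful comparison of the local factors (especially the unit-indices) in the two Chevalley formulas at the two floors of the tower; the norm-compatibility of Hilbert symbols, together with the uniqueness of the prime above each ramified place guaranteed by $\RamHyp$, is what makes this matching possible. Everything else in the argument is formal Galois-cohomological bookkeeping.
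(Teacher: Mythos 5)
Your opening moves are sound: $\RamHyp$ plus Lemma~\ref{lem: ram} and Lemma~\ref{prop: norm_surj} do give surjective norms $\Cl_{K_2}\twoheadrightarrow\Cl_{K_1}\twoheadrightarrow\Cl_{K_0}$, and the deduction that $\bfN\colon M_1\to M_0$ is an isomorphism killing $(\tau-1)M_1$ is correct. But the route you sketch for the crux, $|M_2|=|M_1|$, has a genuine gap on three counts. First, the Chevalley/Hilbert-symbol machinery is not available here: the proposition concerns an arbitrary cyclic $\ell^2$-extension, with no hypothesis that $\mu_\ell\subset K_0$, so Lemma~\ref{lem: hasse_norm} does not apply, and there is no ``extremal unit index'' to speak of. Second, even where Chevalley's formula applies it computes $|\Cl_{K_2}^G|$, the ambiguous classes, not $|\Cl_{K_2}/\ell^n\Cl_{K_2}|$; passing from the former to the latter is exactly the non-formal step, and it is not true in general that triviality of the $\Gal(K_1/K_0)$-action on $M_1$ pins down any unit index. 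Third, and most seriously, the Nakayama bound you invoke only controls the number of $\Z_\ell[G]$-generators of $M_2$: a $\Z_\ell[G]$-module on $r$ generators annihilated by $\ell^n$ can have order as large as $\ell^{nr\ell^2}$, since $\Z_\ell[G]$ has $\Z_\ell$-rank $\ell^2$. So $\dim_{\F_\ell}M_2/\fm M_2=\dim_{\F_\ell}M_0/\ell M_0$ gives no inequality $|M_2|\le|M_0|$, and the ``short analysis of the residual $H$-action'' you defer to is precisely the missing argument.

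The paper closes this gap by working not with the class groups directly but with $X=\Gal(L_2/K_2)$, where $L_2$ is the maximal unramified abelian $\ell$-extension of $K_2$. Using the inertia subgroups of the (totally) ramified primes and the identification of the commutator subgroup of $\Gal(L_2/K_0)$ with $(\sigma-1)X$, one exhibits a single $\Z_\ell[G]$-submodule $M\subset X$ with $X/M\cong X_0$ \emph{and} $X/\omega M\cong X_1$, where $\omega=1+\sigma+\cdots+\sigma^{\ell-1}$. The hypothesis $|M_0|=|M_1|$ then forces $M+\ell^nX=\omega M+\ell^nX$, i.e.\ the image $\overline{M}$ of $M$ in $X/\ell^nX$ satisfies $\overline{M}=\omega\overline{M}$; since $\omega$ lies in the maximal ideal $(\ell,\sigma-1)$, Nakayama applied to $\overline{M}$ (not to $X$) gives $\overline{M}=0$, i.e.\ $M\subset\ell^nX$, and all three quotients coincide. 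The structural input you are missing is this simultaneous presentation of $X_0$ and $X_1$ as quotients of $X$ by $M$ and $\omega M$; without it, no counting of ambiguous classes or generators will close the argument.
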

	\begin{proof}
		Denote by $G=\Gal(K_2/K_0)=\langle\sigma \rangle.$
		Let $L_i$ be the maximal unramified abelian $\ell$-extension of $K_i$ and $X_i=\Gal(L_i/K_i)$. By class field theory $X_i\cong \Cl_{K_i}(\ell)$.  By the maximal property, $L_2/K_0$ is a Galois extension. Let $\widetilde{G}:=\Gal(L_2/K_0)$. 
		The Galois group $G$ acts on $X:=X_2$ via $x^\sigma=\wt\sigma x\wt\sigma^{-1}$ where $\wt\sigma\in \widetilde{G}$ is any lifting of $\sigma$.  By this action $X$ becomes a module over the local ring $\Z_\ell[G]$.   Since $K_0\subset K_1\subset K_2$ satisfies  $\RamHyp$, we have $L_0\cap K_2=K_0$.   Then  $X/M= \Gal(K_2L_0/K_2) \cong X_0$ where $M=\Gal(L_2/K_2L_0)$.  Note that $K_2L_0/K_0$ is Galois,   so  $M$ and $X/M$ are  also  $\Z_\ell[G]$-modules. 
		We have the following claim:
		
		\vskip 0.3cm
		\noindent{\textbf{Claim}}:  $X/{\omega M}\cong X_1$,  where $\omega=1+\sigma+\cdots+\sigma^{\ell-1}\in \Z_\ell[G]$.
		\vskip 0.3cm
		
		Now for any $n\geq 1$, by the claim,
		\[ X_0/{\ell^n X_0}\cong \frac{X}{ M+\ell^nX} \text{ and } X_1/{\ell^n X_1}\cong \frac{X}{ \omega M+\ell^nX}.\]
		By the assumptions,  $  M+\ell^nX=\omega M+\ell^nX$.
		Since $\omega$ lies in the maximal ideal of $\Z_\ell[G]$,  we have $M\subset \ell^n X$ by Nakayama's Lemma. Hence  we have isomorphisms  which are induced by the restrictions
		\[ X/{\ell^n X} \cong  X_1/{\ell^nX_1}\cong X_0/{\ell^nX_0}.\]
		By class field theory we have isomorphisms which are induced by the norm maps
		\[ \Cl_{K_2}/{\ell^n \Cl_{K_2}}\cong \Cl_{K_1}/{\ell^n \Cl_{K_1}}\cong \Cl_{K_0}/{\ell^n \Cl_{K_0}}.\]
		Let $n\rightarrow +\infty$, we get $\Cl_{K_2}(\ell)\cong\Cl_{K_1}(\ell) \cong \Cl_{K_0}(\ell)$.

		Let us prove the claim.   Note that  $G=\widetilde{G}/X$. Let $\{\fp_1,\cdots,\fp_s\}$ be the set of places of $K_0$ ramified in $K_2/K_0$.  Note that   $\fp_i$ is not an infinite place by $\RamHyp$.  For each $\fp_i$, choose a prime ideal $\widetilde{\fp}_i$ of $L_2$ above  $\fp_i$. Let $I_i\subset \widetilde{G}$ be the inertia subgroup of $\widetilde{\fp}_i$. The map $I_i \hookrightarrow \widetilde{G} \twoheadrightarrow G$ induces an isomorphism $I_i\cong G$, since $L_2/K_2$ is unramified and $K_2/K_0$ is totally ramified. Let $\sigma_i\in I_i$ such that $\sigma_i \equiv \tilde{\sigma} \bmod X$. Then $I_i=\langle \sigma_i \rangle$.   Let $a_i=\sigma_i \sigma^{-1}_1 \in X$. Then $\langle I_1,\cdots,I_t\rangle = \langle \sigma_1,a_2,\cdots, a_t\rangle $. Since $L_0$ is the maximal unramified abelian $\ell$-extension of $K_0$, we have	
		\[\Gal(L_2/L_0)=\langle \widetilde{G}', I_1,\cdots,I_t\rangle =
		\langle \widetilde{G}', \sigma_1, a_2,\cdots, a_t\rangle\]
		where $\widetilde{G}'$ is the commutator subgroup of $\widetilde{G}$. In fact $\widetilde{G}'={(\sigma-1)}X$.  The inclusion  ${(\sigma-1)}X\subset \widetilde{G}'$ is clear. On the other hand, it is easy to check that  ${(\sigma-1)}X$ is normal in $\widetilde{G}$ and $X/{{(\sigma-1)}X}$ is in the center of $\widetilde{G}/{(\sigma-1)}X$. Since $\widetilde{G}/X\cong G$ is cyclic, from the exact sequence	
		\[ 1 \rightarrow X/{(\sigma-1)}X \rightarrow \widetilde{G}/{{(\sigma-1)}X} \rightarrow  G \rightarrow 1,\]
		we obtain $\widetilde{G}/{{(\sigma-1)}X}$ is abelian.  Thus we have  
		\[\Gal(L_2/L_0)= \langle  {(\sigma-1)}X,\sigma_1,a_2,\cdots, a_t\rangle.\]
		Since $a_i\in X$ and $X\cap I_1=\{1\}$,  we have $X\cap \Gal(L_2/L_0)= \langle {(\sigma-1)}X,a_2,\cdots, a_t\rangle$. Thus the map $X\hookrightarrow \widetilde{G}$ induces the following isomorphism
		\[ X/\langle {(\sigma-1)}X,a_2,\cdots, a_t\rangle \cong \widetilde{G}/\Gal(L_2/L_0) = X_0. \]
		Therefore $\langle {(\sigma-1)}X,a_2,\cdots, a_t\rangle=M$.  Repeat the above argument to $L_2/K_1$, we  obtain
		\[X/\langle {(\sigma^\ell-1)}X,b_2,\cdots, b_t\rangle  \cong X_1,\]
		where $b_i=\sigma^{\ell}_i\sigma^{-\ell}_1$ for each $i$.  Obviously, ${(\sigma^\ell-1)}X= {\omega (\sigma-1)}X$.  Recall that  $\sigma_i$ is a lifting of $\sigma$ so  by definition $x^{\sigma}=\sigma_i x \sigma^{-1}_i$ for $x\in X$. We have
		\[ \begin{split}
		b_i = \sigma^{\ell}_i \sigma^{-\ell}_1= \sigma^{\ell-1}_i a_i \sigma^{-(\ell-1)}_1=\sigma^{\ell-2}_i a_i \sigma_1 a_i \sigma^{-1}_1 \sigma^{-(\ell-2)}_1 &\\= \sigma^{\ell-2}_i {a_i}^{1+\sigma}\sigma^{-(\ell-2)}_1  = \cdots = a^{1+\sigma+\cdots +\sigma^{\ell-1}}_i=\omega a_i.
		\end{split}\]
		So  $\langle ({\sigma^\ell-1})X, b_2,\cdots, b_t\rangle=\omega M$ and then $X_1=X/{\omega M}$. This finishes the proof of the claim. \end{proof}
	
	\begin{rmk}\label{rmk:stable}			
		$(1)$ Let $K_\infty/K$ be a $\Z_\ell$-extension and $K_n$  its $n$-th layer. It is well known there exists $n_0 $  such that  $K_\infty/K_{n_0}$  satisfies  $\RamHyp$.  Then Proposition~\ref{prop:stable_theorem} recovers  Fukuda's result \cite{Fuk94}    that if  $|\Cl_{K_m}(\ell)|=|\Cl_{K_{m+1}}(\ell)|$ (resp.$ |\Cl_{K_m}/{\ell\Cl_{K_m}}|=|\Cl_{K_{m+1}}/{\ell\Cl_{K_{m+1}}}|$) for some $m\geq n_0$,  then  $|\Cl_{K_m}|=|\Cl_{K_{m+i}}|$ (resp. $ |\Cl_{K_m}/{\ell\Cl_{K_m}}|=|\Cl_{K_{m+1}}/{\ell\Cl_{K_{m+1}}}|$) for any $i\geq 1$.    In fact, our  proof is essentially the same  as  the proof of the corresponding results for $\Z_\ell$-extensions,  see \cite[Lemma 13.14, 13.15]{Was97} and \cite{Fuk94}.

		$(2)$ Let $K$ be a number field containing $\zeta_{\ell^2}$.  Let $a\in K^\times\backslash K^{\times \ell}$ and $K_{n}=K(\sqrt[\ell^n]{a})$. Then $\Gal(K_{m+2}/K_m)\cong \Z/{\ell^2\Z}$ for any $m$.  One can show that  there exists some $n_0$ such that  $K_\infty/K_{n_0}$ satisfies  $\RamHyp$.   If  $|\Cl_{K_m}(\ell)|=|\Cl_{K_{m+1}}(\ell)|$ for some $m\geq n_0$,  then by repeatedly applying Proposition~\ref{prop:stable_theorem}, one can get $|\Cl_{K_{m+i}}(\ell)|=|\Cl_{K_m}(\ell)|$ for any $i\geq 0$.
	\end{rmk}

	Now let $\ell$ and $p$ be   prime numbers and  $K_{n,m}=\Q(p^{\frac{1}{\ell^n}},\zeta_{2\ell^{m}})$. The following result is a consequence of Proposition \ref{prop:stable_theorem}.

	\begin{prop}\label{prop: square_clgp} Assume that all the primes above $\ell$  in $K_{n_0,m_0}$ are  totally ramified in $K_{n_0+1,m_0+1}$  for some integers $n_0\geq 0$ and $m_0\geq 1$ if $\ell\neq 2$ or  $n_0\geq  v_p(2)$ and $m_0\geq 1+v_p(2)$ if $\ell=2$. Then
		\begin{enumerate}
			\item All primes above $\ell$ in $K_{n_0,m_0}$ are totally ramified in $K_{n,m}/K_{n_0,m_0}$ for  all $(n,m)\geq (n_0, m_0)$;
			\item If $|A_{n_0,m_0}|=|A_{n_0+1,m_0+1}|$, then $A_{n,m}\cong A_{n_0,m_0 }$ for all $(n,m)\geq (n_0,m_0)$.
			
			\item  If $\ell \nmid h_{n_0+1,m_0+1}$, then $\ell \nmid h_{n,m}$ for all $(n,m)\geq (n_0, m_0)$.
		\end{enumerate}
	\end{prop}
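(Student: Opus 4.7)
The plan is to extract part (1) from the ramification hypothesis using Lemma~\ref{lem: ram} (on cyclic sub-towers) together with local compositum arguments, then to deduce parts (2) and (3) by iterating Proposition~\ref{prop:stable_theorem} and Lemma~\ref{prop: norm_surj} along cyclic $\Z/\ell^2\Z$-sub-extensions.

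For part (1), the hypothesis says a prime $\fp$ above $\ell$ in $K_{n_0,m_0}$ is totally ramified in $K_{n_0+1,m_0+1}/K_{n_0,m_0}$, hence in both intermediate fields $K_{n_0+1,m_0}$ and $K_{n_0,m_0+1}$. Total ramification along the cyclotomic tower $K_{n_0,m}/K_{n_0,m_0}$ (cyclic of $\ell$-power degree for $m\ge m_0$) follows from Lemma~\ref{lem: ram}. Along the Kummer tower $K_{n,m_0}/K_{n_0,m_0}$, each step $K_{k+1,m_0}/K_{k,m_0}$ is Galois cyclic of degree $\ell$, and by induction on $k$ — comparing the local extension at $\fq_k$ (the unique prime of $K_{k,m_0}$ above $\fp$) with the ramification already present in $K_{n_0+1,m_0}/K_{n_0,m_0}$ — each layer is ramified, so Lemma~\ref{lem: ram} again propagates total ramification. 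For arbitrary $(n,m)\ge(n_0,m_0)$, factor $K_{n,m}/K_{n,m_0}$ (cyclic cyclotomic of $\ell$-power degree); its first layer $K_{n,m_0+1}/K_{n,m_0}$ is totally ramified at primes above $\ell$ by a similar local compositum argument combining the Kummer ramification in $K_{n,m_0}/K_{n_0,m_0}$ with the cyclotomic ramification in $K_{n_0,m_0+1}/K_{n_0,m_0}$, so Lemma~\ref{lem: ram} gives the conclusion.

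For part (2), by part (1) $\RamHyp$ holds and Lemma~\ref{prop: norm_surj} yields surjective norm maps $\Cl_{n',m'}\twoheadrightarrow\Cl_{n,m}$ for $(n',m')\ge(n,m)\ge(n_0,m_0)$; thus $|A_{n,m}|\ge|A_{n_0,m_0}|$. Together with $|A_{n_0,m_0}|=|A_{n_0+1,m_0+1}|$, this squeezes
\[ |A_{n_0,m_0}|=|A_{n_0+1,m_0}|=|A_{n_0,m_0+1}|=|A_{n_0+1,m_0+1}|. \]
Proposition~\ref{prop:stable_theorem} applied to the cyclic $\Z/\ell^2\Z$-extension $K_{n_0,m_0+2}/K_{n_0,m_0}$ (cyclotomic, $\RamHyp$ by part (1)) yields $A_{n_0,m_0+2}\cong A_{n_0,m_0}$. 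For the Kummer axis, when $\zeta_{\ell^2}\in K_{n_0,m_0}$ Proposition~\ref{prop:stable_theorem} applies directly to $K_{n_0+2,m_0}/K_{n_0,m_0}$; otherwise, first pass up one cyclotomic layer and apply Proposition~\ref{prop:stable_theorem} to the cyclic Kummer $K_{n_0+2,m_0+1}/K_{n_0,m_0+1}$, then descend to $A_{n_0+2,m_0}$ by using that it is a quotient of $A_{n_0+2,m_0+1}$ via the surjective norm, together with the order equality from the squeeze above. A double induction on $(n,m)$ — reapplying Proposition~\ref{prop:stable_theorem} with shifted base fields as needed — extends $A_{n,m}\cong A_{n_0,m_0}$ to all $(n,m)\ge(n_0,m_0)$.

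Part (3) is immediate: $\ell\nmid h_{n_0+1,m_0+1}$ forces $|A_{n_0,m_0}|\le |A_{n_0+1,m_0+1}|=1$ by norm surjectivity, so the hypothesis of part (2) holds trivially and $A_{n,m}=1$ for all $(n,m)\ge(n_0,m_0)$. The main obstacle will be the local ramification analysis in part (1) — specifically verifying that each cyclic layer of the Kummer tower, and the first cyclotomic layer over $K_{n,m_0}$, is actually ramified at primes above $\ell$ — together with handling the potential non-cyclicity of Kummer sub-extensions in part (2), which necessitates the diagonal shift in the induction; once these are handled, the conclusion follows formally from Proposition~\ref{prop:stable_theorem} and Lemma~\ref{prop: norm_surj}.
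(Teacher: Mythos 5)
Your overall strategy matches the paper's: establish total ramification above $\ell$ via Lemma~\ref{lem: ram}, then propagate the equality $|A_{n_0,m_0}|=|A_{n_0+1,m_0+1}|$ through the lattice by iterating Proposition~\ref{prop:stable_theorem} on cyclic $\Z/\ell^2\Z$ sub-extensions and squeezing with Lemma~\ref{prop: norm_surj}. Parts (2) and (3) as you describe them are sound, and your device of raising the cyclotomic level by one so that the Kummer step becomes cyclic of degree $\ell^2$, then descending by norm surjectivity plus order counting, is essentially the paper's diagonal induction $A_{n_0+k,m_0+k}\cong A_{n_0,m_0}$ followed by a squeeze.

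The genuine soft spot is in part (1), and you flag it yourself without resolving it: the claim that each individual Kummer layer $K_{k+1,m_0}/K_{k,m_0}$ is ramified above $\ell$. Lemma~\ref{lem: ram} does not apply to the tower $K_{n,m_0}/K_{n_0,m_0}$, which is not cyclic in general (one needs $\zeta_{\ell^{n-n_0}}$ in the base), and ``comparing the local extension at $\fq_k$ with the ramification already present in $K_{n_0+1,m_0}/K_{n_0,m_0}$'' is not an argument: the layers for different $k$ are different extensions and nothing a priori forces them to ramify alike. The missing idea — which is how the paper proceeds — is to work along the diagonal: apply Lemma~\ref{lem: ram} to the cyclic cyclotomic extensions $K_{n_0,m_0+2}/K_{n_0,m_0}$ and $K_{n_0+1,m_0+2}/K_{n_0+1,m_0}$ (their first layers are ramified by hypothesis) to get total ramification in $K_{n_0+1,m_0+2}/K_{n_0,m_0}$, then apply it to the now-cyclic degree-$\ell^2$ Kummer extension $K_{n_0+2,m_0+2}/K_{n_0,m_0+2}$, whose first layer is ramified by the previous step. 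Iterating gives total ramification in $K_{n_0+k,m_0+k}/K_{n_0,m_0}$ for all $k$, and the statement for arbitrary $(n,m)\geq(n_0,m_0)$ follows by restricting to the intermediate field $K_{n,m}$ — so the layer-by-layer claim at fixed level $m_0$ is never needed (and, if wanted, falls out afterwards by multiplicativity of ramification indices). Once part (1) is repaired this way, the rest of your plan goes through as written.
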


	\begin{proof} By the assumption for $n_0$ and $m_0$, one has $[K_{n_0+1,m_0+1}:K_{n_0,m_0}]=\ell^2$ and  
		\[ \begin{split}
		\Gal(K_{n_0,m_0+2}/K_{n_0,m_0}) &\cong  \Gal(K_{n_0+1,m_0+2}/K_{n_0+1,m_0}) \\ 
		&\cong  \Gal(K_{n_0+2,m_0+2}/K_{n_0,m_0+2}) \cong \Z/{\ell^2\Z}.\end{split}\]
		Consider the diagram. 
		\[ \begin{xymatrix}{
			K_{n_0,m_0+2} \ar@{-}[d] \ar@{-}[r] & K_{n_0+1,m_0+2} \ar@{-}[d] \ar@{-}[r] & K_{n_0+2,m_0+2}\\
			K_{n_0,m_0+1}\ar@{-}[r] & K_{n_0+1,m_0+1}  &   \\
			K_{n_0,m_0}\ar@{-}[u]\ar@{-}[r] & K_{n_0+1,m_0}\ar@{-}[u]    & \\
		}\end{xymatrix} \]
		
		For (1),
		let  $\fq$ be a prime of $K_{n_0,m_0}$ above $\ell$.  Apply   Lemma \ref{lem: ram} to the two vertical lines in the diagram,  we obtain $\fq$ is totally ramified in $K_{n_0+1,m_0+2}/K_{n_0,m_0}$. Apply  Lemma \ref{lem: ram} to the  top horizontal  line in the diagram, we get  $\fq$ is totally ramified in $K_{n_0+2,m_0+2}/K_{n_0+2,m_0}$. Hence $\fq$ is totally ramified in $K_{n_0+2,m_0+2}/K_{n_0,m_0}$.  Repeatedly using the above argument, we obtain $\fq$ is totally ramified in $K_{n,m}/K_{n_0,m_0}$ for  all $n\geq n_0$ and $m\geq m_0$.

		For (2), by  Lemma~\ref{prop: norm_surj}, $|A_{n_0,m_0}|=|A_{n_0+1,m_0+1}|$ implies that
		\[A_{n_0+1,m_0+1}\cong A_{n_0+1,m_0}\cong A_{n_0,m_0+1}\cong A_{n_0,m_0}.\]
		If $p=\ell$,   the two vertical  lines and the  top horizontal  line  in the diagram   satisfy $\RamHyp$ by (1).  If $p\neq \ell$,
		let $\fp$ be a prime of $K_{0,m}$ above $p$. For any $n\geq 1$,  note that  $x^{\ell^n}-p$ is a  $\fp$-Eisenstein polynomial in $K_{0,m}[x]$. Therefore $K_{n,m}/K_{0,m}$ is totally ramified at $\fp$ for each $n,m$. In particular the horizontal line is totally ramified at $\fp$.  Since $K_{\infty,\infty}/K_{n_0,m_0}$ is unramified outside $\ell$ and $p$,  the two horizontal lines and the right most vertical line  in the diagram all satisfy $\RamHyp$ by (1).  
		
		Since $K_{n_0,m_0+2}/K_{n_0,m_0}$  is a cyclic extension of degree $\ell^2$, applying Proposition \ref{prop:stable_theorem} to this extension, we get 
		\[ A_{n_0,m_0+2}\cong A_{n_0,m_0+1}\cong A_{n_0,m_0}.\]
		Similarly, applying Proposition \ref{prop:stable_theorem}   to  $K_{n_0+1,m_0+2}/K_{n_0+1,m_0}$, we obtain 
		\[ A_{n_0+1,m_0+2}\cong A_{n_0+1,m_0+1}\cong A_{n_0+1,m_0}.\]	
		Therefore $A_{n_0+2,m_0+1}\cong A_{n_0+2,m_0}$. Note that $K_{n_0+2,m_0+2}/K_{n_0,m_0+2}$ is also a cyclic extension of degree $\ell^2$. Applying Proposition \ref{prop:stable_theorem}  to this extension, we obtain
		\[A_{n_0+2,m_0+2}\cong A_{n_0+1,m_0+1} \cong A_{n_0,m_0+2}.\]
		Thus  $A_{n_0+2,m_0+2}\cong A_{n_0+1,m_0+1}$. 
		Using the above argument inductively, we  have  $A_{n_0+k,m_0+k}\cong A_{n_0,m_0 }$ for  any $k\geq 1$. Finally we have $A_{n,m}\cong A_{n_0,m_0}$ by Lemma~\ref{prop: norm_surj}.

		For (3), $\ell \nmid h_{n_0+1,m_0+1}$ implies that $\ell \nmid h_{n_0,m_0}$ by Lemma~\ref{prop: norm_surj}. Then the result follows from (2).  \end{proof}

	\section{The case that $\ell$ is odd}

	\begin{lem}\label{lem: ram_odd}
		Assume $p$ is either $\ell$ or a primitive element modulo  $\ell^2$. Then $\ell$ is totally ramified in $K_{n,m}$ for any $(n, m)>(0,0)$. 
	\end{lem}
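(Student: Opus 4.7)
The plan is to argue locally at $\ell$. Setting $L := \Q_\ell(\zeta_{\ell^m})$ (with $L = \Q_\ell$ when $m=0$), the field $L/\Q_\ell$ is totally ramified abelian of degree $\phi(\ell^m)$, and the $\ell$-adic completion of $K_{n,m}$ is $L(p^{1/\ell^n})$. It suffices to show this local extension is totally ramified of degree $\ell^n$, for then the completion has degree $\ell^n\phi(\ell^m) = [K_{n,m}:\Q]$ over $\Q_\ell$, forcing $\ell$ to have a unique prime above it in $K_{n,m}$ and to be totally ramified.

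The first step is to show $p \notin L^{\times\ell}$, via a Galois-theoretic argument. If $p \in L^{\times\ell}$, pick $p^{1/\ell} \in L$; then all of its $\Q_\ell$-conjugates $p^{1/\ell}\zeta_\ell^i$ lie in $L$ (since $L/\Q_\ell$ is Galois), forcing $\zeta_\ell \in L$ and $M := \Q_\ell(p^{1/\ell}, \zeta_\ell) \subseteq L$. But $M/\Q_\ell$ has non-abelian Galois group $\Z/\ell \rtimes (\Z/\ell)^\times$ of order $\ell(\ell-1)$, contradicting $L/\Q_\ell$ abelian. To see $[M:\Q_\ell] = \ell(\ell-1)$ it suffices to know $p \notin \Q_\ell^{\times\ell}$: for $p = \ell$ this is Eisenstein, while for $p$ a primitive root modulo $\ell^2$ one writes $p = \omega v$ with $\omega$ the Teichm\"uller lift of $p \bmod \ell$ and $v \equiv 1 + c\ell \pmod{\ell^2}$ with $\ell \nmid c$, and verifies via the binomial expansion of $(1+\epsilon)^\ell$ for principal units $\epsilon$ that $v$ is not an $\ell$-th power in $\Q_\ell$.

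With $p \notin L^{\times\ell}$ in hand, the classical criterion that $x^N - a$ is irreducible over a field whenever $N$ is an odd prime power and $a$ is not an $\ell$-th power gives $x^{\ell^n} - p$ irreducible over $L$, so $[L(p^{1/\ell^n}):L] = \ell^n$. Its reduction modulo the maximal ideal of $\co_L$ is $x^{\ell^n} - \bar p = (x - \bar p)^{\ell^n}$ in $\F_\ell[x]$ (using $a^{\ell^n} = a$ on $\F_\ell$, with $\bar p = 0$ when $p = \ell$), a power of a single linear factor. The standard theorem that an irreducible monic $f \in \co_L[x]$ with $\bar f = \bar g^e$ for irreducible $\bar g$ yields an extension $L(\beta)/L$ of residue degree $\deg \bar g$ and ramification index $e$ then identifies $L(p^{1/\ell^n})/L$ as totally ramified of degree $\ell^n$. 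The main subtlety is the first step, where the abelianness of $L/\Q_\ell$ is crucial and (for $p$ primitive modulo $\ell^2$) the $\ell$-adic binomial argument is needed to rule out $p \in \Q_\ell^{\times\ell}$; fortunately this avoids any delicate computation with the cyclotomic unit $\ell/(1-\zeta_{\ell^m})^{\phi(\ell^m)}$, which would otherwise be required in the case $p = \ell$, $m \geq 2$.
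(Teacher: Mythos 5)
Your reduction to the local question at $\ell$ and your proof that $x^{\ell^n}-p$ is irreducible over $L=\Q_\ell(\zeta_{\ell^m})$ are sound, and the non-abelianness of $\Q_\ell(p^{1/\ell},\zeta_\ell)/\Q_\ell$ that you use there is essentially the same key observation the paper exploits. The gap is in your final step. The ``standard theorem'' you invoke --- that a monic irreducible $f\in\co_L[x]$ with $\bar f=\bar g^{\,e}$, $\bar g$ irreducible, yields an extension with ramification index $e$ and residue degree $\deg\bar g$ --- is false: $f=x^2-\ell^2u$ over $\Q_\ell$ (with $\ell$ odd and $u$ a non-square unit) is irreducible with $\bar f=x^2$, yet it generates the unramified quadratic extension $\Q_\ell(\sqrt u)$. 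Moreover the failure is not incidental to your setting but endemic to it: since $L/\Q_\ell$ is totally ramified its residue field is $\F_\ell$, so \emph{every} binomial $x^{\ell^n}-u$ with $u\in\co_L^\times$ reduces to $(x-\bar u)^{\ell^n}$; your argument would therefore show that $L(u^{1/\ell^n})/L$ is totally ramified for every unit $u\notin L^{\times\ell}$, whereas for $m\ge n$ the degree-$\ell^n$ unramified extension of $L$ is itself of the form $L(u^{1/\ell^n})$ for a suitable unit $u$ by Kummer theory. A symptom of the problem is that your ramification step uses the hypothesis on $p$ only through irreducibility, and irreducibility alone cannot rule out an unramified piece.

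The paper circumvents this in two moves, for both of which you already have the ingredients. First, over $\Q_\ell$ the shifted polynomial $(x+p)^{\ell^n}-p$ \emph{is} Eisenstein under the hypotheses on $p$: its constant term $p(p^{\ell^n-1}-1)$ has $\ell$-valuation exactly $1$ because $p$ is a primitive root modulo $\ell^2$ (or trivially when $p=\ell$). Hence $\Q_\ell(p^{1/\ell^n})/\Q_\ell$ is totally ramified of degree $\ell^n$; this is where the arithmetic hypothesis on $p$ genuinely enters the ramification statement, and it is exactly the ``delicate computation'' your shortcut was meant to avoid. Second, to see that the compositum with $\Q_\ell(\zeta_{\ell^m})$ remains totally ramified one redeploys your non-abelianness argument: working with $K_{n,n}$ (which suffices, as $K_{n,m}\subseteq K_{N,N}$ for $N=\max(n,m)$ and total ramification passes to subfields), the maximal unramified subextension of $\Q_\ell(p^{1/\ell^n},\zeta_{\ell^n})/\Q_\ell$ lies in $\Q_\ell^{\ab}\cap\Q_\ell(p^{1/\ell^n},\zeta_{\ell^n})$, which equals $\Q_\ell(\zeta_{\ell^n})$ since otherwise $p^{1/\ell}$ would lie in an abelian extension of $\Q_\ell$; as $\Q_\ell(\zeta_{\ell^n})/\Q_\ell$ is totally ramified, the unramified part is trivial.
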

	
	\begin{proof} For $n \geq 1$,  $(x+p)^{\ell^n}-p$ is  an Eisenstein polynomial in $\Q_\ell[x]$ by the assumptions on $p$ and $\ell$, hence is irreducible in $\Q_\ell[x]$. This means that the extension $\Q_\ell(p^{\frac{1}{\ell^n}})/\Q_\ell$ is totally ramifield of degree $\ell^n$ and $\mu_\ell \not\subset\Q_\ell(p^{\frac{1}{\ell^n}})$. As a result $\Q_\ell(p^{\frac{1}{\ell^n}})/\Q_\ell(p^{\frac{1}{\ell^{n-1}}})$ is non-Galois of degree $\ell$,  one has $\Q_\ell(p^{\frac{1}{\ell^n}}, \zeta_{\ell^m})/\Q_\ell(p^{\frac{1}{\ell^{n-1}}}, \zeta_{\ell^m})$ is also of degree $\ell$. By induction,  
		\[ [\Q_\ell(p^{\frac{1}{\ell^n}}, \zeta_{\ell^m}): \Q_\ell]= \ell\cdot [\Q_\ell(p^{\frac{1}{\ell^{n-1}}}, \zeta_{\ell^m}): \Q_\ell]=\ell^n (\ell^m-\ell^{m-1}).   \]
		Then the extension  $\Q_\ell(p^{\frac{1}{\ell^n}},\zeta_{\ell^{n}})/\Q_\ell(\zeta_{\ell^{n}})$ is cyclic of degree $\ell^n$, with the only subextensions of the form  $\Q_\ell(p^{\frac{1}{\ell^k}},\zeta_{\ell^{n}})$ for $0\leq k\leq n$. 
		If $\Q^{\ab}_\ell \cap \Q_\ell(p^{\frac{1}{\ell^n}},\zeta_{\ell^{n}})\supsetneq \Q_\ell(\zeta_{\ell^n})$, then  there exists $k>0$ such that $p^{\frac{1}{\ell^k}}\in \Q_\ell^{\ab}$ and hence   $p^{\frac{1}{\ell}}\in \Q_\ell^{\ab}$, impossible. Hence $\Q^{\ab}_\ell \cap \Q_\ell(p^{\frac{1}{\ell^n}},\zeta_{\ell^{n}})=\Q_\ell(\zeta_{\ell^n})$. Thus $\ell$ is totally ramified in $K_{n,n}$ for any $n\geq 1$, and therefore totally ramified in $K_{n,m}$ for all $(n,m)>(0,0)$.
	\end{proof}

	\begin{proof}[Proof of Theorem~\ref{thm: odd}]
		By Proposition~\ref{prop: square_clgp} and Lemma~\ref{lem: ram_odd}, if $\ell\nmid h_{1,2}$, then $\ell \nmid h_{n,m}$  for any $(n,m)\geq (1,2)$ and then $\ell\nmid h_{n,m}$ for any $(n,m)\geq (0,0)$  by Lemma~\ref{prop: norm_surj}. We prove $\ell\nmid h_{1,2}$  by  applying Chevalley's formula \eqref{chevalley} to $K_{1,2}/K_{0,2}$.  We treat the case $p\neq \ell$ and leave the case $p=\ell$ to the readers.
		
		Since $p$ is inert in $K_{0,2}$,  the ramified primes in $K_{1,2}/K_{0,2}$ are $p\co_{0,2}$ and $(1-\zeta_{\ell^2})\co_{0,2}$. As $\ell$ is regular, one has   $\ell$ does not divides the class number $K_{0,m}$ for any $m\geq 1$, see \cite[Corollary 10.5]{Was97}.  We now calculate the unit index in Chevalley's formula.  Recall  the following map as in Lemma~\ref{lem: hasse_norm}:
		\[\fct{\rho: E_{0,2}}{\mu_\ell \times \mu_\ell}{x}{\left(\hilbert{x}{p}{p\co_{0,2}}_\ell, \hilbert{x}{p}{(1-\zeta_{\ell^2})}_\ell \right).}\]
		We have the index $[E_{0,2}:E_{0,2}\cap \bfN K^\times_{0,2}]=|\rho(E_{0,2})| \leq \ell$ by product formula.  Since $p$ is a primitive root modulo $\ell^2$, we have $\ell^2\nmid p^{\ell-1}-1$. Then by the norm-compatibility of the Hilbert symbols, 
		\[ \hilbert{\zeta_{\ell^2}}{p}{p\co_{0,2}}_\ell = \hilbert{\zeta_{\ell}}{p}{p\co_{0,1}}_\ell = \zeta^{\frac{p^{\ell-1}-1}{\ell}}_{\ell}\neq 1.\] 
		Thus $|\rho(E_{0,2})| = \ell$ and Chevalley's formula gives  $\ell \nmid |\Cl^{G}_{1,2}|$ where $G=\Gal(K_{1,2}/K_{0,2})$. Therefore $\ell \nmid h_{1,2}$ by Lemma~\ref{nakayama}.  \end{proof}

	\begin{proof}[Proof of Theorem~\ref{thm:main2}]
		(1) is a  special case of Theorem~\ref{thm: odd}. 
		
		For (2), by tracing the proof of Lemma~\ref{lem: ram_odd}, we obtain that $3$ is totally ramified  in $K_{n,n}/\Q$ for any $n\geq 1$.  To prove (2), we first show that $A_{2,2}\cong A_{1,1}\cong \Z/{3\Z}$.  We   apply Gras' formula \eqref{eq:Gras} in the case
		\[K_{2,2}/K_{0,2},\ C=\langle \cl(\fq_{2,2}) \rangle,\ D=\langle  \fq_{2,2} \rangle \]
		where $\fq_{2,2}$ is the unique prime ideal  of $K_{2,2}$ above $3$.  In this case
		\[\Lambda_D=\langle \pm \zeta_9, 1-\zeta_9, 1-\zeta^2_9, 1-\zeta^4_9\rangle.\] 
		Since $p\equiv 4,7\bmod 9$, we have $p\co_{0,2}=\fp_1\fp_2$.  The ramified primes of $K_{0,2}$ in $K_{2,2}$ are $\fq_{0,1},\fp_1,\fp_2$.  For the map 
		\[\fct{\rho: \Lambda_D}{\mu_9 \times \mu_9 \times \mu_9}{x}{\left(\hilbert{x}{p}{\fp_1}_9, \hilbert{x}{p}{\fp_2}_9,\hilbert{x}{p}{\fq_{0,2}}_9\right)}\]
		defined in Lemma \ref{lem: hasse_norm}, we know $ \rho(\Lambda_D) \subset (\mu_9 \times \mu_9 \times \mu_9)^{\prod=1}$,
		$[\Lambda_D : \Lambda_D\cap \bfN(K^\times_{2,2})]=|\rho(\Lambda_D)|$ and $[E_{0,2} : E_{0,2}\cap \bfN(K^\times_{2,2})]=|\rho(E_{0,2})|$. 
		
		Now Lemma~\ref{lem:3part} tells us that  $|\rho(\Lambda_D)|=81$ and $|\rho(E_{0,2})|=27$.
		Hence Gras' formula implies that $3\nmid (\Cl_{2,2}/C)^G$ where $G=\Gal(K_{2,2}/K_{0,2})$. This means $A_{2,2}=C$ by Lemma~\ref{nakayama}. In particular, $A_{2,2}=\Cl^G_{2,2}(3)$. By Chevalley's formula \eqref{chevalley}, we have $|A_{2,2}|=|\Cl^G_{2,2}|=3$.  For $m\leq 2, n\leq 2$,  the norm map from $A_{2,2}$ to $A_{m,n}$ is surjective. It has been shown In \cite{Aouissi} that $A_{1,0}\cong \Z/{3\Z}$, the inequalities  $|A_{1,0}|\leq |A_{1,1}|\leq |A_{2,2}|$ then imply that $A_{2,2}\cong A_{1,1}\cong \Z/{3\Z}$. 
		
		By Proposition~\ref{prop: square_clgp},  we have $A_{n,m}\cong \Z/{3\Z}$ for any $n\geq 1, m\geq 1$. For  $n\geq 1$,  note that   $ 3=|A_{1,0}|\leq |A_{n,0}|\leq |A_{n,1}|=3$, then $A_{n,0}\cong \Z/{3\Z}$. This completes the proof of (2). \end{proof}

	\begin{lem} \label{lem:3part}
		We have $|\rho(\Lambda_D)|=81$ and $|\rho(E_{0,2})|=27$.
	\end{lem}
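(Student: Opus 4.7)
The plan is to compute the Hilbert symbols $\hilbert{x}{p}{v}_9$ for each generator $x$ of $\Lambda_D$ at each of the three ramified places of $K_{2,2}/K_{0,2}$, and then read off the orders of $\rho(\Lambda_D)$ and $\rho(E_{0,2})$. Since $p\equiv 4, 7\bmod 9$, the prime $p$ has order $3$ in $(\Z/9\Z)^\times$, so $p\co_{0,2}=\fp_1\fp_2$ with residue degree $3$, and the unique prime $\fq_{0,2}=(1-\zeta_9)$ of $K_{0,2}$ above $3$ is principal. Because $3$ is totally ramified in $K_{2,2}/K_{0,2}$, one has $\bfN_{K_{2,2}/K_{0,2}}(\fq_{2,2})=\fq_{0,2}$, giving $\Lambda_D=E_{0,2}\cdot\langle 1-\zeta_9\rangle$; combined with the cyclotomic unit theorem for $\Q(\zeta_9)$ (where $h^+=1$), this yields the stated generating set. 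The map $\rho$ lands in $\mu_9^3$ and, by the product formula, in the order-$81$ subgroup $\ker\Pi$; the goal is to show $\rho(\Lambda_D)=\ker\Pi$ and $[\rho(\Lambda_D):\rho(E_{0,2})]=3$.

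At $\fp_i$ the Hilbert symbol is tame: Proposition~\ref{prop: hil}(5) gives $\hilbert{u}{p}{\fp_i}_9\equiv u^{-(p^3-1)/9}\pmod{\fp_i}$ for any unit $u$, interpreted in $\mu_9$. A short computation via $p^3-1=(p-1)(p^2+p+1)$ shows $v_3(p^3-1)=2$ whenever $p\equiv 4,7\bmod 9$, so $s:=(p^3-1)/9\bmod 9$ is a unit of $\Z/9\Z$, and $\rho(\zeta_9)_{\fp_i}=\zeta_9^{-s}$ is independent of $i$. Complex conjugation $\sigma\colon\zeta_9\mapsto\zeta_9^{-1}$ swaps $\fp_1$ and $\fp_2$; functoriality of the Hilbert symbol, together with $1-\zeta_9^{-k}=-\zeta_9^{-k}(1-\zeta_9^k)$, then expresses the images at $\fp_2$ in terms of those at $\fp_1$.

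To analyze $(1-\zeta_9^k)$ modulo $3$, apply the norm-compatibility in Proposition~\ref{prop: hil}(6) to descend to $K_{0,1}=\Q(\zeta_3)$ with $\fp_1'=\fp_1\cap K_{0,1}$. Using $\prod_{j=0}^{2}(1-\zeta_9\zeta_3^j)=1-\zeta_3$ one computes $\bfN(1-\zeta_9)=\bfN(1-\zeta_9^4)=1-\zeta_3$ and $\bfN(1-\zeta_9^2)=1-\zeta_3^2$. The classical identities $(1-\zeta_3)^2=-3\zeta_3$ and $(1-\zeta_3)(1-\zeta_3^2)=3$ then express $\hilbert{1-\zeta_3}{p}{\fp_1'}_3$ in terms of the cubic residue symbol $\leg{3}{p}_3$ and of $\hilbert{\zeta_3}{p}{\fp_1'}_3=\zeta_3^{-(p-1)/3\bmod 3}$, both completely explicit.

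A case analysis on $p\bmod 9\in\{4,7\}$ together with the two nontrivial values of $\leg{3}{p}_3$ then shows that the four vectors $\rho(\zeta_9),\rho(1-\zeta_9),\rho(1-\zeta_9^2),\rho(1-\zeta_9^4)$ span $\ker\Pi\cong(\Z/9\Z)^2$; writing $u_k=(1-\zeta_9^k)/(1-\zeta_9)$, the three vectors $\rho(\zeta_9),\rho(u_2),\rho(u_4)$ span a subgroup of index $3$. The main obstacle is this latter statement, namely that $|\rho(E_{0,2})|$ is exactly $27$ rather than $9$: the mod-$3$ reciprocity step already forces each $\rho(u_k)$ into the diagonal of $\ker\Pi$, but ruling out that $\rho(u_2)$ and $\rho(u_4)$ both lie in the diagonal (which would give $|\rho(E_{0,2})|=9$) requires finer mod-$9$ control, and it is here that the hypothesis $\leg{3}{p}_3\neq 1$ is essential.
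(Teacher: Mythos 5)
Your setup and your treatment of the first assertion are sound and essentially follow the paper's route: $\rho(\Lambda_D)$ lies in the order-$81$ kernel of the product map; $\rho(\zeta_9)$ generates the full diagonal copy of $\mu_9$ because $v_3(p^3-1)=2$; and the cube $\rho(1-\zeta_9)^3$ is computed by descending through $\bfN_{\Q_p(\zeta_9)/\Q_p}(1-\zeta_9)=1-\zeta_3$ to a cubic symbol over $\Q_p$, where $(1-\zeta_3)(1-\zeta_3^2)=3$ and the hypothesis $\leg{3}{p}_3\neq 1$ force the two coordinates of $\rho(1-\zeta_9)^3$ at $\fp_1$ and $\fp_2$ to differ. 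This gives $\rho(1-\zeta_9)^3\notin\pair{\rho(\zeta_9)}$, hence $|\rho(\Lambda_D)|=81$; no case analysis on $p\bmod 9$ is actually needed.

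The second assertion is where your proposal has a genuine gap, and you flag it yourself: you assert that $\rho(\zeta_9)$, $\rho(u_2)$, $\rho(u_4)$ generate a subgroup of index exactly $3$ in $\ker\Pi$, but concede that ruling out index $9$ ``requires finer mod-$9$ control'' which you do not supply. The missing idea is Galois equivariance of the Hilbert symbol at the primes above $p$. Since $p\equiv 4,7\bmod 9$, the automorphism $\sigma_4:\zeta_9\mapsto\zeta_9^4$ generates the decomposition group of $\fp_i$, so $\sigma_4(\fp_i)=\fp_i$ and $\hilbert{1-\zeta_9^4}{p}{\fp_i}_9=\sigma_4\bigl(\hilbert{1-\zeta_9}{p}{\fp_i}_9\bigr)=\hilbert{1-\zeta_9}{p}{\fp_i}_9^{\,4}$, whence $\rho\bigl(\tfrac{1-\zeta_9^4}{1-\zeta_9}\bigr)=\rho(1-\zeta_9)^3$. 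This is exactly the element your mod-$3$ computation already places outside $\pair{\rho(\zeta_9)}$, so $|\rho(E_{0,2})|\geq 27$ follows with no further mod-$9$ work; in particular your intermediate claim that the mod-$3$ step forces each $\rho(u_k)$ into the diagonal is the opposite of what happens for $u_4$. You also never address the upper bound $|\rho(E_{0,2})|\leq 27$ (a priori the image could be all of $\ker\Pi$, which would make $h_{2,2}$ prime to $3$). The paper gets this from the external input $3\mid h_{1,0}$, hence $3\mid h_{2,2}$ by Lemma 2.6, which via Chevalley's formula caps the unit index at $27$; one could alternatively extract it from the same $\sigma_4$-equivariance together with the fact that $\sigma_2$ swaps $\fp_1$ and $\fp_2$, but some such argument must be given.
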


	\begin{proof}
		By the product formula, $|\rho(\Lambda_D)|\leq 81$. To get equality, it suffices to show  $|\rho(\Lambda_D)|\geq 81$.

		We first compute $\rho(\zeta_9)$. In the local field $\Q_p(\zeta_9)$, one has
		\[\hilbert{\zeta_9}{p}{\Q_p(\zeta_9)}_9=\zeta^{\frac{p^3-1}{9}}_9  \]
		which is a primitive $9$-th root of unity since $p\equiv 4,7\bmod 9$. 
		The prime ideals $\fp_1$ and $\fp_2$ above $p$ induce two  embeddings from $K_{0,2}$ to $\Q_p(\zeta_9)$ which are not $\Gal(\overline{\Q}_p/\Q_p)$-conjugate. We choose the corresponding embeddings  by setting  $\fp_1(\zeta_9)=\zeta_9$ and $\fp_2(\zeta_9)=\zeta^{-1}_9$.  Then
		\[ \hilbert{\zeta_9}{p}{\fp_1}_9=\hilbert{\zeta_9}{p}{\fp_2}_9 = \zeta^{\frac{p^3-1}{9}}_9.\]
		By the product formula, one has \[\rho(\zeta_9)=(\zeta^{\frac{p^3-1}{9}}_9,\zeta^{\frac{p^3-1}{9}}_9,\zeta^{-\frac{2(p^3-1)}{9}}_9) \text{ and } |\langle \rho(\zeta_9) \rangle| =9.\]
		
		To prove $|\rho(\Lambda_D)|\geq 81$, it suffices to show that $\rho(1-\zeta_9)^3 \not\in \langle \rho(\zeta_9) \rangle$. 
		We have
		\[ \hilbert{1-\zeta_9}{p}{\Q_p(\zeta_9)}_9^3=\hilbert{1-\zeta_9}{p}{\Q_p(\zeta_9)}_3=\hilbert{1-\zeta_3}{p}{\Q_p}_3, \]
		and hence 
		\[ \begin{split}
		\hilbert{1-\zeta_9}{p}{\fp_1}_9^3   \hilbert{1-\zeta_9}{p}{\fp_2}_9^3   =&\hilbert{1-\zeta_9}{p}{\Q_p(\zeta_9)}_3\hilbert{1-\zeta^{-1}_9}{p}{\Q_p(\zeta_9)}_3\\ =&\hilbert{1-\zeta_3}{p}{\Q_p}_3\hilbert{1-\zeta^{-1}_3}{p}{\Q_p}_3=\hilbert{3}{p}{\Q_p}_3\neq 1, \end{split}\]
		where the first equality is by definition, the second equality is by the norm-compatibility  of Hilbert symbols, and the last equality is by assumptions on $p$.  This implies  $\hilbert{1-\zeta_9}{p}{\fp_1}^3_9\neq \hilbert{1-\zeta_9}{p}{\fp_2}^3_9$ and $\rho(1-\zeta_9)^3 \not\in \langle \rho(\zeta_9)\rangle$.
		
		Now we compute $|\rho(E_{0,2})|$.  Since $3\mid h_{1,0}$, one has $3\mid h_{2,2}$ by Lemma~\ref{prop: norm_surj}. By Chevalley's formula and Lemma~\ref{nakayama}, we must have 
		\[
		|\rho(E_{0,2})|\leq 27.
		\]
		Let $\sigma_4\in \Gal(\Q(\zeta_8)/\Q)$ be given by $\sigma_4(\zeta_9)=\zeta^4_9$. Since $p\equiv 4,7\bmod 9$, we have  $\sigma_4(\fp_i)=\fp_i$ ($i=1,2$). It follows then
		\[ \hilbert{1-\zeta^4_9}{p}{\fp_i}_9\equiv (1-\zeta^4_9)^{\frac{p^3-1}{9}} \bmod \fp_i =\sigma_4\left(\hilbert{1-\zeta_9}{p}{\fp_i}_9\right) =\hilbert{1-\zeta_9}{p}{\fp_i}^4_9.\] 
		Therefore $\rho(\frac{1-\zeta^4_9}{1-\zeta_9})=\rho(1-\zeta_9)^3$. As we have proved, $|\rho(E_{0,2})|\geq |\langle \rho(\zeta_9),\rho(\frac{1-\zeta^4_9}{1-\zeta_9}) \rangle|=27$. Hence $|\rho(E_{0,2})|=27$.
	\end{proof}

	\section{The case $\ell=2$}
	In this section,   $K_{n,m}=\Q(p^{\frac{1}{2^n}},\zeta_{2^{m+1}})$,  $A_{n,m}$ and $h_{n,m}$ are the $2$-part of the class group and the class number of $K_{n,m}$ respectively.

	\subsection{The cases $p=2$ and $p\equiv 3,5\bmod 8$ }

	\begin{proof}[Proof of Theorem~\ref{thm:main1} for $p=2$]	
		The prime $2$ is totally ramified in $K_{2,3}=\Q(\sqrt[4]{2},\zeta_{16})$ and $h_{2,3}=1$.  Therefore $2$ is totally ramified in $K_{\infty,\infty}$  and $2\nmid h_{n,m}$ for $n\geq 1, m\geq 2$ by Proposition~\ref{prop: square_clgp}. The remaining $(n,m)$ follows from Lemma~\ref{prop: norm_surj}.
	\end{proof}

	\begin{lem}\label{hil_sym_3mod8}
		Suppose $p\equiv 3 \bmod 8$.
		
		$(1)$ The unique prime above $2$ in $K_{1,1}$ is totally ramified in $K_{\infty,\infty}/K_{1,1}$.
		
		$(2)$ $\prod_{v}e_v=32$ where $v$ runs over the places of $K_{0,2}$ and $e_v$ is the ramification index of $v$ in $K_{2,2}/K_{0,2}$.
		
		$(3)$ $[E_{0,2}:E_{0,2}\cap \bfN K^\times_{2,2}]=8$.
	\end{lem}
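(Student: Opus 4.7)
\emph{Proof plan.}  My approach treats the three parts separately: part (1) via local analysis at $2$ combined with Proposition~\ref{prop: square_clgp}, part (2) by isolating the contribution of each ramified prime of $K_{0,2}$, and part (3) through Lemma~\ref{lem: hasse_norm} and explicit Hilbert symbol computations.

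For (1), I first identify the local structure of $K_{1,1}$ at $2$. Since $p\equiv 3\bmod 8$, we have $-p\equiv 5\bmod 8$, so $\Q_2(\sqrt{-p})$ is the unramified quadratic extension of $\Q_2$, while $\Q_2(\sqrt p)$ and $\Q_2(i)$ are two distinct ramified quadratic extensions. Hence $[\Q_2(\sqrt p,i):\Q_2]=4$, and $2$ has a unique prime $\fq$ in $K_{1,1}$ with $e_\fq=f_\fq=2$ over $\Q$. Next I verify that $\fq$ is totally ramified in $K_{2,2}/K_{1,1}$. The extension $K_{2,2}/K_{1,1}$ is Klein four Galois with quadratic subfields $K_{1,2}=K_{1,1}(\sqrt{i})$, $K_{2,1}=K_{1,1}(\sqrt{\sqrt p})$, and $K_{1,1}(\sqrt{i\sqrt p})$. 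I will show by direct $2$-adic computation that none of $i$, $\sqrt p$, or $i\sqrt p$ is a square in $(K_{1,1})_\fq=\Q_2(\sqrt 5,i)$: each of these elements is a unit congruent to $1$ modulo a uniformizer of valuation one, and a square-root obstruction (units of $1+\pi\co$ near $1$ square into $1+\pi^2\co$ in this field) rules out each being a square. By Galois theory the inertia subgroup of $\fq$ in $\Gal(K_{2,2}/K_{1,1})\cong(\Z/2\Z)^2$ is then the full group, giving $e_\fq=4$ in $K_{2,2}/K_{1,1}$. Proposition~\ref{prop: square_clgp}(1), applied with $(n_0,m_0)=(1,1)$, then propagates total ramification to the entire tower $K_{\infty,\infty}/K_{1,1}$.

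For (2), the ramified primes of $K_{0,2}=\Q(\zeta_8)$ in $K_{2,2}/K_{0,2}$ are those above $p$ and the unique prime above $2$; infinite places contribute trivially since $K_{0,2}$ is totally imaginary. Because $p\equiv 3\bmod 8$ has order $2$ in $(\Z/8\Z)^\times$, $p$ splits into two primes $\fp_1,\fp_2$ of $K_{0,2}$, and Eisensteinness of $x^4-p$ in each completion gives $e_{\fp_1}=e_{\fp_2}=4$. For the prime $\fq_{0,2}$ above $2$, I would pass through $K_{1,2}$: the completion $\Q_2(\zeta_8,\sqrt p)=\Q_2(\zeta_8,\sqrt 5)$ is unramified quadratic over $\Q_2(\zeta_8)$, so $K_{1,2}/K_{0,2}$ is unramified at $\fq_{0,2}$; by part (1) the prime of $K_{1,2}$ above $2$ is totally ramified of degree $2$ in $K_{2,2}/K_{1,2}$. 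Hence $e_{\fq_{0,2}}=2$ and $\prod_v e_v=4\cdot 4\cdot 2=32$.

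For (3), I apply Lemma~\ref{lem: hasse_norm} to $K_{2,2}=K_{0,2}(\sqrt[4]{p})$ to reduce $[E_{0,2}:E_{0,2}\cap\bfN K_{2,2}^\times]$ to $|\rho(E_{0,2})|$ where
\[\rho\colon E_{0,2}\longrightarrow\mu_4^3,\qquad x\longmapsto\Bigl(\hilbert{x}{p}{\fp_1}_4,\hilbert{x}{p}{\fp_2}_4,\hilbert{x}{p}{\fq_{0,2}}_4\Bigr).\]
Since $n=8$ is a prime power, Washington's unit index formula gives $E_{0,2}=\langle\zeta_8\rangle\times\langle 1+\sqrt 2\rangle$, with quotient $(\Z/4\Z)^2$ modulo fourth powers, so it suffices to compute $\rho$ on the two generators. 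At $\fp_i$ the completion is $\Q_{p^2}$ (unramified quadratic over $\Q_p$), and Proposition~\ref{prop: hil}(5),(6) give tame formulas for the symbols in terms of residues of $\zeta_8$ and $1+\sqrt 2$ modulo $p$; at $\fq_{0,2}$ the values are then pinned down by the product formula together with an explicit local evaluation in $\Q_2(\zeta_8)$. The expected outcome is $|\rho(E_{0,2})|=8$, an index-$2$ subgroup of $\ker\Pi\cong\mu_4^2$.

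The main technical obstacle is the wild ramification analysis at $2$: both the three non-square verifications in part (1) and the Hilbert symbol at $\fq_{0,2}$ in part (3) require careful bookkeeping of squares of units in $\Q_2(\sqrt 5,i)$ and $\Q_2(\zeta_8)$, since Hensel's lemma is not directly available in residue characteristic $2$. The remaining steps are tame computations at primes above $p$ or formal consequences of the local picture.
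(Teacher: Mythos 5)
The serious problem is in your part (1); parts (2) and (3) essentially follow the paper's route, with the caveats noted at the end.

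In (1) your criterion proves the wrong statement. Showing that $i$, $\sqrt{p}$ and $i\sqrt{p}$ are non-squares in $(K_{1,1})_\fq=\Q_2(\sqrt{5},i)$ only shows that $\fq$ is non-split in each quadratic subextension, i.e.\ that the \emph{decomposition} group is all of $(\Z/2\Z)^2$; it does not show that the \emph{inertia} group is. Since $\Q_2(\sqrt5,i)$ has a (unique) unramified quadratic extension, one of the three generators could well be a non-square lying in the square class that cuts out that unramified extension, in which case $e_\fq=2$ and $f_\fq=2$. You must show each of the three quadratic subextensions is locally \emph{ramified}. Moreover, the congruence you invoke is false for the third generator: since $(i\sqrt{p})^2=-p\equiv 5\bmod 8$, the element $i\sqrt{p}=\pm\sqrt{-p}$ lies in the unramified subfield $\Q_2(\sqrt{-p})$ and satisfies $v(i\sqrt{p}-1)=v(2)=2$, not $1$ (for $p=3$, $\sqrt{-3}=1+2\zeta_3$). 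So your ``valuation one'' obstruction applies to $i$ and $\sqrt{p}$ but not to $i\sqrt{p}$; worse, $\pm\sqrt{-p}$ \emph{is} congruent to a square modulo $\fm^3$ (squares of $1+\fm$ fill out $1+2\F_4^\times$ modulo $\fm^3$), so deciding whether $K_{1,1}(\sqrt{i\sqrt{p}})/K_{1,1}$ ramifies at $\fq$ requires a genuinely finer analysis of $\co^\times/(\co^\times)^2$. The paper avoids all of this: it checks that $K_{1,2}/K_{1,1}$ is ramified at $2$, shows that $K_{2,2}/K_{1,2}$ is ramified at $2$ because $\Q_2(\sqrt[4]{p})/\Q_2$ is non-Galois, whence $\Q_2^{\ur}\cap\Q_2(\zeta_8,\sqrt[4]{p})\subset\Q_2^{\ab}\cap\Q_2(\zeta_8,\sqrt[4]{p})=\Q_2(\zeta_8,\sqrt{p})$, and then multiplies ramification indices in the tower. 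You should either adopt that argument or prove directly that the third quadratic subextension is ramified.

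Your part (2) is correct granted (1). In part (3) the setup ($d=4$, the map $\rho$ on $E_{0,2}=\langle\zeta_8,1+\sqrt2\rangle$, tame evaluation at $\fp_1,\fp_2$, product formula at $\fq_{0,2}$) is exactly the paper's, but you stop at ``the expected outcome is $8$.'' The entire content of that part is the verification that the $\fp_1$- and $\fp_2$-components of $\rho(\zeta_8)$ are equal (both $\zeta_8^{-(p^2-1)/4}=\pm i$) while those of $\rho(1+\sqrt2)$ are opposite ($\pm i$ and $\mp i$), which comes from the two embeddings being related by $\zeta_8\mapsto\zeta_8^{-1}$, fixing $\sqrt2$ but negating $i$. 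That sign pattern is what forces $|\rho(E_{0,2})|=8$ rather than $4$, and it needs to be carried out, not asserted.
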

	\begin{proof}
		(1)	We only need to show  that the unique prime above $2$ in $K_{1,1}$ is totally ramified in $K_{2,2}/K_{1,1}$  by Proposition \ref{prop: square_clgp}.

		It is easy to see that $K_{1,2}/K_{1,1}$ is  ramified at the prime above $2$. To see the prime above $2$ is also ramified in  $K_{2,2}/K_{1,2}$,  we consider the local fields extension $\Q_2(\zeta_8,\sqrt[4]{p})/\Q_2(\zeta_8,\sqrt{p})$.   Note that
		\[ \Q_2(\sqrt[4]{p}) =
		\begin{cases}
		\Q_2(\sqrt[4]{3})  & \text{ if } p\equiv 3 \bmod 16, \\
		\Q_2(\sqrt[4]{11})  & \text{ if } p\equiv 11 \bmod 16. \\
		\end{cases} \]
		Since the fields $\Q_2(\sqrt[4]{3})$ and $\Q_2(\sqrt[4]{11})$ are not Galois over $\Q_2$,
		\[ \Q^{\ur}_2 \cap \Q_2(\zeta_8,\sqrt[4]{p}) \subset \Q^{\ab}_2\cap \Q_2(\zeta_8,\sqrt[4]{p})=\Q_2(\zeta_8,\sqrt{p}), \]
		where $\Q^{\ur}_2$ (resp. $\Q^{\ab}_2$) is the maximal unramified  (resp. abelian) extension of $\Q_2$.
		Thus $\Q_2(\zeta_{8},\sqrt[4]{p})/\Q_2(\zeta_{8},\sqrt{p})$ is totally ramified.  So $K_{2,2}/K_{1,1}$ is totally ramified at $2$.
		
		(2) Since $p\equiv 3 \bmod 8$, we have  $p\co_{0,2}=\fp_1\fp_2$, with   $\fp_1,\fp_2$ totally ramified in $K_{\infty,2}$. Then $e_{\fp_i}=[\Q_p(\sqrt[4]{p},\zeta_8): \Q_p(\zeta_8)]=4$.
		Let $\fq$ be the unique prime ideal above $2$ in $K_{0,2}$. Then $e_{\fq}=2$ as $\Q_2(\sqrt{p},\zeta_{8})/\Q_2(\zeta_{8})$ is unramified. Since $K_{2,2}/K_{0,2}$ is unramified outside $2$ and $p$, we have $\prod_{v}e_v =32$.
		
		(3) Note that  $E_{0,2}=\pair{ \zeta_8,1+\sqrt{2} }$. Recall  the following map as in Lemma~\ref{lem: hasse_norm}:
		\[\fct{\rho: E_{0,2}}{\mu_4 \times \mu_4 \times \mu_4}{x}{\left(\hilbert{x}{p}{\fp_1}_4, \hilbert{x}{p}{\fp_2}_4,\hilbert{x}{p}{\fq}_4\right).}\]
		
		We have  $|\rho(E_{0,2})|=[E_{0,2}:E_{0,2}\cap \bfN K^\times_{2,2}]$ and  $\rho(E_{0,2}) \subset (\mu_4\times \mu_4\times\mu_4) ^{\prod=1}$.
		
		Let $\iota_1,\iota_2: \Q(\zeta_8)\rightarrow \Q_p(\zeta_8)$  be the  corresponding embeddings  of $\fp_1,\fp_2$ such that  $\iota_1(\zeta_8)=\zeta_8$ and $\iota_2(\zeta_8)=\zeta^{-1}_8$.
		By definition  $\hilbert{x}{p}{\fp_j}_4=\iota^{-1}_j \hilbert{\iota_j(x)}{p}{\Q_p(\zeta_8)}_4$ for $j=1,2$.
		
		We first compute $\rho(\zeta_8)$.
		Since the residue field of $\Q_p(\zeta_8)$ is $\F_{p^2}$, we have
		\[\hilbert{\zeta_8^{\pm1}}{p}{\Q_p(\zeta_8)}_4=\hilbert{p}{\zeta_8^{\pm1}}{\Q_p(\zeta_8)}^{-1}_4=
		\zeta^{\mp\frac{p^2-1}{4}}_8.\]
		Thus
		\[ \hilbert{\zeta_8}{p}{\fp_1}_4=\hilbert{\zeta_8}{p}{\fp_2}_4=\zeta^{-\frac{p^2-1}{4}}_8= \pm i. \]
		By the product formula $\hilbert{\zeta_8}{p}{\fq}_4=-1$. Therefore $\rho(\zeta_8)=(\pm i,\pm i, -1)$.
		
		Now we compute $\rho(1+\sqrt{2})$. In the local field $\Q_p(\zeta_8)$,
		\[ \hilbert{1+\sqrt{2}}{p}{\Q_p(\sqrt{2})}^2_4=\hilbert{1+\sqrt{2}}{p}{\Q_p
			(\sqrt{2})}_2=\hilbert{-1}{p}{\Q_p}_2=-1,\]
		Hence
		\[ \hilbert{1+\sqrt{2}}{p}{\Q_p(\sqrt{2})}_4=\pm i.\]
		Since $\iota_1(1+\sqrt{2})=\iota_2(1+\sqrt{2})=1+\sqrt{2}$ and $\iota_1(i)=i, \iota_2(i)=-i$, we  have
		\[ \hilbert{1+\sqrt{2}}{p}{\fp_1}_4=\pm i, \quad \hilbert{1+\sqrt{2}}{p}{\fp_2}_4=\mp i.\]
		By the product formula, $\hilbert{1+\sqrt{2}}{p}{\fq}_4=1$.
		
		Therefore, $\rho(\zeta_8)=(\pm i,\pm i,-1)$ and $\rho(1+\sqrt{2})=(\pm i, \mp i,1)$. In each case, we  have    $|\rho(E_{0,2})|=8$.
	\end{proof}

	\begin{proof}[Proof of Theorem~\ref{thm:main1} for $p\equiv3 \bmod 8$]
		We know  that the class number of $K_{0,2}=\Q(\zeta_8)$ is $1$, the product of the ramification indices is $32$ and the index $[E_{0,2}:E_{0,2}\cap \bfN K^\times_{2,2}]=8$ by Lemma~\ref{hil_sym_3mod8}, then $|\Cl^{G}_{2,2}|=1$ by Chevalley's formula~\eqref{chevalley}. Thus $2\nmid h_{2,2}$  by Lemma~\ref{nakayama}. Now Proposition~\ref{prop: square_clgp} implies   $2\nmid h_{n,m}$ for  $n,m\ge 1$.
		Since $K_{n,1}/K_{n,0}$ is ramified at the real places,  we have $2\nmid h_{n,0}$  by Lemma \ref{prop: norm_surj}.
	\end{proof}

	\begin{lem} Suppose  $p\equiv 5\bmod 8$.
		
		$(1)$ The unique prime in $K_{1,0}$ above $2$ is totally ramified in $K_{\infty,\infty}/K_{1,0}$.
		
		$(2)$ $\prod_{v}e(v,K_{3,2}/K_{0,2})=2^8$ where $v$ runs over the places of $K_{0,2}$.
		
		$(3)$ $\prod_{v}e(v,K_{2,1}/K_{0,1})=2^5$ where $v$ runs over the places of $K_{0,1}$.
		
		$(4)$ $\prod_{v}e(v,K_{1,2}/K_{0,2})=4$ where $v$ runs over the places of $K_{0,2}$.
	\end{lem}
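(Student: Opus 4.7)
The plan is to establish (1) first by a local analysis at the prime above $2$ combined with Proposition~\ref{prop: square_clgp}, and then to deduce (2), (3), (4) as ramification bookkeeping arguments that rest on (1) and on the splitting behavior of $p$ in cyclotomic fields.

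For (1), since $p\equiv 5\bmod 8$ the prime $2$ is inert in $K_{1,0}=\Q(\sqrt{p})$, so there is a unique prime $\fq$ above $2$ with completion $\Q_2(\sqrt{5})$, the unramified quadratic extension of $\Q_2$. I will apply Proposition~\ref{prop: square_clgp} with $(n_0,m_0)=(1,1)$, whose hypothesis requires that every prime above $2$ in $K_{1,1}$ be totally ramified in $K_{2,2}/K_{1,1}$. Preliminarily, $\fq$ ramifies in $K_{1,1}/K_{1,0}$ since $\Q_2(\sqrt{5},i)/\Q_2(\sqrt{5})$ is a ramified quadratic (as $\Q_2(i)/\Q_2$ is ramified), so $K_{1,1}$ has a unique prime $\fP$ above $\fq$ with completion $\Q_2(\sqrt{5},i)$. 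To verify the hypothesis, I will show that $\Q_2(\sqrt{5},i,\sqrt{2},p^{1/4})/\Q_2(\sqrt{5},i)$ is totally ramified of degree $4$ via two ramified quadratic steps: adjoining $\sqrt{2}$ yields $\Q_2(\sqrt{5},\zeta_8)$ (the ramification index passes from $2$ to $4$), and adjoining $p^{1/4}$ is another ramified quadratic. For the latter, writing $\sqrt{p}=\sqrt{5}\cdot u$ with $u\in 1+4\Z_2$ satisfying $u^2=p/5$, one has $u\equiv 1$ or $5\bmod 8$, so $u$ is a square in $\Q_2(\sqrt{5})$ (since $5=(\sqrt{5})^2$); the problem reduces to showing $\sqrt{5}$ is not a square in $\Q_2(\sqrt{5},\zeta_8)$, i.e., $\sqrt[4]{5}\notin\Q_2(\sqrt{5},\zeta_8)$. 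This follows from the degree count $[\Q_2(\sqrt[4]{5},\zeta_8):\Q_2]=16>8=[\Q_2(\sqrt{5},\zeta_8):\Q_2]$, obtained by showing that $\Q_2(\sqrt[4]{5})/\Q_2(\sqrt{5})$ is a genuine ramified quadratic ($\sqrt{5}$ not being a square modulo $4$ in $\co_{\Q_2(\sqrt{5})}$) together with the fact that $\Q_2(\sqrt[4]{5})\cap\Q_2(\zeta_8)=\Q_2$ by comparison of $(e,f)$ profiles ($(2,2)$ versus $(4,1)$). For the remaining cases $(n,0)$ of (1), since $K_{n,1}/K_{n,0}$ has degree $2$ and $K_{n,1}$ has a unique prime above $\fq$ with completion of degree $2^{n+1}$ over $\Q_2$ (by what we just proved applied to $K_{n,1}/K_{1,0}$), the local degree of $K_{n,0,\fp}/\Q_2$ must be exactly $2^n$, forcing a unique prime in $K_{n,0}$ above $\fq$ with $(e,f)=(2^{n-1},2)$, so $\fq$ is totally ramified in $K_{n,0}/K_{1,0}$.

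Parts (2), (3), (4) reduce to computing products of ramification indices. For $p\equiv 5\bmod 8$, the prime $p$ splits in $\Q(i)$ (as $p\equiv 1\bmod 4$) and is inert in $\Q(\sqrt{2})$, so $p\co_{0,1}=\fp_1\fp_2$ with residue degree $1$ each, and $p\co_{0,2}=\fp_1\fp_2$ with residue degree $2$ each. At each $\fp_i$ the Eisenstein polynomial $x^{2^n}-p$ gives $e_{\fp_i}(K_{n,m}/K_{0,m})=2^n$. For the unique prime above $2$ in $K_{0,m}$, part (1) implies that the ramification index at $2$ in $K_{n,m}/\Q$ equals $[K_{n,m}:K_{1,0}]$ (since $K_{1,0}/\Q$ is unramified at $2$); dividing by the ramification at $2$ in $K_{0,m}/\Q$ (namely $2$ for $m=1$ and $4$ for $m=2$) yields the ramification in $K_{n,m}/K_{0,m}$. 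For (4) the prime above $2$ is in fact unramified in $K_{1,2}/K_{0,2}$, since locally $\Q_2(\sqrt{5},\zeta_8)/\Q_2(\zeta_8)$ is the unramified quadratic extension (adjoining $\sqrt{5}\in\Q_2^{\ur}$). The products come out to $8\cdot 8\cdot 4=2^8$ for (2), $4\cdot 4\cdot 2=2^5$ for (3), and $2\cdot 2\cdot 1=4$ for (4).

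The main obstacle is the local computation in (1): verifying that $\sqrt{5}$ is not a square in the degree-$8$ field $\Q_2(\sqrt{5},\zeta_8)$, equivalently that $\Q_2(\sqrt[4]{5})$ and $\Q_2(\zeta_8)$ are linearly disjoint over $\Q_2$. Once this and the associated degree count are in hand, the rest of the lemma follows from routine ramification bookkeeping together with the stationary machinery of Section~3.
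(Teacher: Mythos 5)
Your overall strategy is the same as the paper's: verify that the prime above $2$ in $K_{1,1}$ is totally ramified in $K_{2,2}/K_{1,1}$ by a local computation, invoke Proposition~\ref{prop: square_clgp}, and then do ramification bookkeeping for (2)--(4). The bookkeeping in (2)--(4) is correct (the paper in fact leaves (3) and (4) to the reader), as is your reduction of the local question to the square class of $\sqrt{5}$ and your extension of total ramification to the fields $K_{n,0}$.

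There is, however, a genuine gap at the crux of part (1): twice you conclude that a quadratic extension of local fields is \emph{ramified} from the fact that it is \emph{nontrivial}. The serious instance is the last step: you write that the problem ``reduces to showing $\sqrt[4]{5}\notin\Q_2(\sqrt{5},\zeta_8)$,'' but that only shows $[\Q_2(\sqrt{5},\zeta_8,\sqrt[4]{5}):\Q_2(\sqrt{5},\zeta_8)]=2$; a priori this quadratic extension could be the \emph{unramified} one. Your $(e,f)$ data ($\Q_2(\sqrt[4]{5})$ of type $(2,2)$, $\Q_2(\zeta_8)$ of type $(4,1)$, compositum of degree $16$) only force $4\mid e$ and $2\mid f$, leaving both $(e,f)=(8,2)$ and $(4,4)$ open --- and in the latter case the prime would not be totally ramified in $K_{2,2}/K_{1,1}$, so Proposition~\ref{prop: square_clgp} could not be applied. (The same issue appears in your claim that $\Q_2(\sqrt[4]{5})/\Q_2(\sqrt{5})$ is ramified: ``$\sqrt 5$ is not a square'' does not by itself exclude the unramified quadratic, though for the degree count this instance happens not to matter.) The missing ingredient is exactly the paper's one-line observation: $\Q_2(\sqrt[4]{p})/\Q_2$ is non-Galois, so $\sqrt[4]{p}\notin\Q_2^{\ab}$; if the top step were unramified, then $\Q_2(\zeta_8,\sqrt{p},\sqrt[4]{p})$ would equal the compositum of $\Q_2(\zeta_8)$ with the unramified quartic extension of $\Q_2$, which is abelian over $\Q_2$ --- a contradiction. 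Adding this observation (which also settles the $(e,f)$ type of $\Q_2(\sqrt[4]{5})$) repairs the argument and essentially turns your proof into the paper's.
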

	\begin{proof}	
		(1) Note that $\Q_2(\sqrt[4]{p})/\Q_2$ is not Galois, so  $\sqrt[4]{p}\notin \Q^{\ab}_2$. Then the proof is the same as the case $p\equiv 3 \bmod 8$.
		
		(2) We only need to consider the primes above $2$ and $p$.
		Since $e(p,K_{3,0}/\Q)=8$ and  $p\co_{0,2}=\fp_1\fp_2$,  we have $e(\fp_1,K_{3,2}/K_{0,2})=e(\fp_2,K_{3,2}/K_{0,2})=8$. From (1), we can easily obtain that $e(\fq_{0,2},K_{3,2}/K_{0,2})=4$ for $\fq_{0,2}$ the only prime above $2$ in $K_{0,2}$. Hence the product of ramification indexes is $2^8$.
		
		The proofs of (3) and (4)  is easy, we leave it to the readers.
	\end{proof}

	\begin{lem}\label{lem:5mod8 index}
		
		Let $p\equiv 5\bmod 8$. Let  $\Lambda_{0,2}=\pair{ (1-\zeta_8)^2, \zeta_8,1+\sqrt{2} } \subset K^\times_{0,2}$ and $\Lambda_{0,1}= \pair{ (1-i)^2, i } \subset K^\times_{0,1}$. We have
		
		$(1)$ $[\Lambda_{0,2} : \Lambda_{0,2}\cap \bfN K^\times_{3,2}]=32$ and  $[E_{0,2}:E_{0,2}\cap \bfN K^\times_{3,2}]=16$;
		
		$(2)$ $[\Lambda_{0,1} : \Lambda_{0,1}\cap \bfN K^\times_{2,1}]=8$ and $[E_{0,1}: E_{0,1}\cap \bfN K^\times_{2,1}]=4$;
		
		$(3)$ $[E_{0,2}:E_{0,2}\cap \bfN K^\times_{1,2}]=2$.
	\end{lem}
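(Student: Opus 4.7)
The plan is to apply Lemma~\ref{lem: hasse_norm} to each of the three extensions $K_{3,2}/K_{0,2}$, $K_{2,1}/K_{0,1}$ and $K_{1,2}/K_{0,2}$, reducing each index to the order of the image of the Hilbert symbol map $\rho$ at the ramified primes. The approach parallels the argument for $p\equiv 3\bmod 8$ given in Lemma~\ref{hil_sym_3mod8}(3), but also requires handling the non-unit generators $(1-\zeta_8)^2$ and $(1-i)^2$ of $\Lambda_{0,2}$ and $\Lambda_{0,1}$.

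The first step is to pin down the ramified primes. Since $p\equiv 5\bmod 8$ forces $p\equiv 1\bmod 4$, the prime $p$ splits as $\fp_1\fp_2$ in both $K_{0,1}$ and $K_{0,2}$, and the Eisenstein property of $x^{2^n}-p$ locally at $\fp_i$ shows total ramification in the Kummer towers. The unique prime $\fq$ above $2$ is ramified in $K_{3,2}/K_{0,2}$ with $e_\fq=4$ and in $K_{2,1}/K_{0,1}$ with $e_\fq=2$ (consistent with parts (2) and (3) of the preceding lemma). However, $\fq$ is \emph{un}ramified in $K_{1,2}/K_{0,2}$: this is because $p\equiv 5\bmod 8$ makes $\Q_2(\sqrt p)$ the unramified quadratic extension of $\Q_2$, which is linearly disjoint from the totally ramified $\Q_2(\zeta_8)$, so $\Q_2(\zeta_8,\sqrt p)/\Q_2(\zeta_8)$ is the unramified quadratic extension. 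Consequently, in case (3) the map $\rho$ only involves $\fp_1$ and $\fp_2$.

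Next I would compute the Hilbert symbols at $\fp_1,\fp_2$ using Proposition~\ref{prop: hil}(5) with uniformizer $p$ and residue field $\F_{p^2}$ (respectively $\F_p$). The key Frobenius identities are $(1+\sqrt 2)^{p+1} \equiv -1 \bmod p$, valid because $2$ is a quadratic non-residue mod $p$ for $p\equiv \pm 5\bmod 8$, and $(1-\zeta_8)^{p+1} \equiv 1-i \bmod p$ (using $\zeta_8^p = -\zeta_8$). Together with $\omega(\zeta_8)=\zeta_8$ and $\omega(i)=i$, these give explicit values for $\hilbert{p}{\zeta_8}{\fp_j}_{2^n}$, $\hilbert{p}{1+\sqrt 2}{\fp_j}_{2^n}$, $\hilbert{p}{(1-\zeta_8)^2}{\fp_j}_{2^n}$ and $\hilbert{p}{(1-i)^2}{\fp_j}_{2^n}$. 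I would convert these to global symbols at $\fp_j$ via the two embeddings $\iota_1,\iota_2$ with $\iota_1(\zeta_8)=\zeta_8$, $\iota_2(\zeta_8)=\zeta_8^{-1}$, and use the product formula to determine the component at $\fq$ when $\fq$ is ramified.

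Finally I would count the image sizes. For (1), $\rho$ lands in $\mu_8^3\cap\Ker(\prod)$, a group of order $64$; I expect $\rho(\zeta_8),\rho(1+\sqrt 2),\rho((1-\zeta_8)^2)$ to generate a subgroup of order $32$, with $\rho(E_{0,2})$ the order-$16$ subgroup generated by the first two. For (2), an analogous computation in $\mu_4^3\cap\Ker(\prod)$ (order $16$) should give $|\rho(\Lambda_{0,1})|=8$ and $|\rho(E_{0,1})|=4$. For (3), it suffices to observe $\rho(\zeta_8)=(-1,-1)$ in $\mu_2^2$ (which follows from $(p^2-1)/2\equiv 4\bmod 8$), exhibiting a non-trivial element and hence forcing the image to be the entire kernel of order $2$. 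The hardest part will be the case (1) computation on $(1-\zeta_8)^2$, which reduces via Proposition~\ref{prop: hil}(2) to the degree-$4$ symbol on $1-\zeta_8$ but where the precise primitive $2^n$-th root of unity appearing depends on $p \bmod 16$; nevertheless the rank counts needed for the claimed indices are independent of this ambiguity.
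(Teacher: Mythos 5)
Your proposal is correct and follows essentially the same route as the paper: reduce each index to $|\rho(\cdot)|$ via Lemma~\ref{lem: hasse_norm}, identify the ramified places (including that $\fq$ is unramified in $K_{1,2}/K_{0,2}$ since $\Q_2(\sqrt p)/\Q_2$ is unramified for $p\equiv 5\bmod 8$), and compute the symbols at $\fp_1,\fp_2$ with the product formula supplying the $\fq$-component. The only point needing the extra care you already flagged is $\rho((1-\zeta_8)^2)$: the paper resolves the sign ambiguity by showing $a^+a^-=\hilbert{2}{p}{\Q_p}_4=\pm i$ and $a^-/a^+=\hilbert{i}{p}{\Q_p}_4=\pm i$, so exactly one of the first two components has order $4$, which is what forces the image up from $16$ to $32$ in every case.
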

	\begin{proof}
		Denote by $\fq_{n,m}$ the unique prime ideal of $K_{n,m}$ above $2$ for each $n,m \geq 0$. 	   Note that  $E_{0,2}=\pair{ \zeta_8,1+\sqrt{2} }$.   Then $\Lambda_{0,2}=\Lambda_{\langle \fq_{3,2} \rangle}$ corresponds to the extension $K_{3,2}/K_{0,2}$ and $\Lambda_{0,1}=\Lambda_{\langle \fq_{2,1} \rangle}$ corresponds to the extension $K_{2,1}/K_{0,1}$ as in Lemma \ref{lem: hasse_norm}.
		
		Since $p\equiv 5\bmod 8$, we have $p\co_{0,1}=\fp_1\fp_2$ and $p\co_{0,2}=\fP_1\fP_2$. Note that $\fP_1,\fP_2,\fq_{0,2}$ are exactly the ramified places in $K_{3,2}/K_{0,2}$.   For (1), we study the map as in Lemma \ref{lem: hasse_norm}:
		\[\fct{\rho:=\rho_{\langle \fq_{3,2}\rangle, K_{3,2}/K_{0,2}}: \Lambda_{0,2}}{\mu_8 \times \mu_8 \times \mu_8}{x}{\left(\hilbert{x}{p}{\fP_1}_8, \hilbert{x}{p}{\fP_2}_8,\hilbert{x}{p}{\fq_{0,2}}_8\right).}\]
		By Lemma \ref{lem: hasse_norm}, $ \rho(\Lambda_{0,2}) \subset (\mu_8 \times \mu_8 \times \mu_8)^{\prod=1}$,
		$[\Lambda_{0,2} : \Lambda_{0,2}\cap \bfN(K^\times_{3,2})]=|\rho(\Lambda_{0,2})|$ and $[E_{0,2} : E_{0,2}\cap \bfN(K^\times_{3,2})]=|\rho(E_{0,2})|$.
		
		Let  $ \iota_j:\Q(\zeta_8)\to\Q_p(\zeta_8)$  the corresponding embeddings for $\fP_j$  for $j=1,2$. We choose $ \iota_j$ so that  $\iota_1(\zeta_8)=\zeta_8$ (and hence $\iota(i)=i,\iota(\sqrt{2})=\sqrt{2}$) and $\iota_2(\zeta_8)=\zeta^{-1}_8$ (and hence $\iota_2(i)=-i,\iota_2(\sqrt{2})=\sqrt{2}$).
		The Hilbert symbol $\hilbert{x}{p}{\fP_i}_8$ by definition is  $\iota^{-1}_i  \hilbert{\iota_i(x)}{p}{\Q_p(\zeta_8)}_8$.

		We first compute $\rho(\zeta_8)$. In the local field $\Q_p(\zeta_8)$,
		\[\hilbert{\zeta^{\pm 1}_8}{p}{\Q_p(\zeta_8)}_8=\hilbert{p}{\zeta_8^{\pm}}{\Q_p(\zeta_8)}^{-1}_8=\zeta^{\mp \frac{p^2-1}{8}}_8, \]
		we have
		\[ \hilbert{\zeta_8}{p}{\fP_{1}}_8=\hilbert{\zeta_8}{p}{\fP_2}_8
		=\zeta^{-\frac{p^2-1}{8}}_8.\]
		Hence $\rho(\zeta_8)=(\zeta^{-\frac{p^2-1}{8}}_8,\zeta^{-\frac{p^2-1}{8}}_8,\pm i) $ by the product formula.

		Now we compute $\rho(1+\sqrt{2})$.  In $\Q_p(\zeta_8)$,
		\[\hilbert{1+\sqrt{2}}{p}{\Q_p(\zeta_8)}^2_8=
		\hilbert{1+\sqrt{2}}{p}{\Q_p(\zeta_8)}_4=\hilbert{-1}{p}{\Q_p}_4=-1,\]
		where the second equality is due to   the norm-compatible property of Hilbert symbols and the fact $i\in \Q_p$ for $p\equiv 5\bmod 8$,
		the last equality is due to the fact  $-1$ is a square  but not a fourth power  in $\Z/{p\Z}$ for  $p\equiv 5\bmod 8$.
		Therefore
		\[ \hilbert{1+\sqrt{2}}{p}{\Q_p(\zeta_8)}_8=\pm i. \]
		Since $\iota_1(\sqrt{2})=\iota_2(\sqrt{2})=\sqrt{2}$ and $\iota_1(i)=i, \iota_2(i)=-i$, we have
		\[\hilbert{1+\sqrt{2}}{p}{\fP_{1}}_8=\pm i, \quad \hilbert{1+\sqrt{2}}{p}{\fP_2}_8=\mp i.\]
		Hence $\rho(1+\sqrt{2})=(\pm i, \mp i,1)$ by the product formula.    In each case, we always have $|\rho(E_{0,2})|=|\langle \rho(\zeta_8), \rho(1+\sqrt{2}) \rangle|=16$.

		Finally we compute  $\rho((1-\zeta_8)^2)$. In $\Q_p(\zeta_8)$,
		\[a^\pm:=\hilbert{(1-\zeta_8^{\pm1})^2}{p}{\Q_p(\zeta_8)}_8=\hilbert{1-\zeta_8^{\pm1}}{p}{\Q_p(\zeta_8)}_4=\hilbert{(1-\zeta_8^{\pm1})(1+\zeta_8^{\pm1})}{p}{\Q_p}_4=\hilbert{1\mp i}{p}{\Q_p}_4.\]
		Then $a^+ a^-=\hilbert{2}{p}{\Q_p}_4=\pm i$ and
		$\frac{a^-}{a^+}=\hilbert{i}{p}{\Q_p}_4=\pm i$.
		Therefore
		\[(a^+,a^-)=(\pm i,1),(\pm i,-1),(1,\pm i),(-1,\pm i).\]
		By definition, $\hilbert{(1-\zeta_8)^2}{p}{\fP_{1}}_8=a^+$ and $\hilbert{(1-\zeta_8)^2}{p}{\fP_2}_8=\iota^{-1}_2 (a^-)$. Therefore
		\[\left( \hilbert{(1-\zeta_8)^2}{p}{\fP_{1}}_8,\hilbert{(1-\zeta_8)^2}{p}{\fP_2}_8 \right)=(\pm i,1),(\pm i,-1),(1,\mp i), (-1,\mp i).\]
		In each case, we always have  $|\rho(\Lambda_{0,2})|=|\langle \rho((1-\zeta_8)^2),\rho(\zeta_8),\rho(1+\sqrt{2}) \rangle|=32$. This proves (1).
		
		For (2), we study the map
		\[\fct{\rho_4:=\rho_{\langle \fq_{2,1}\rangle, K_{2,1}/K_{0,1}}: \Lambda_{0,1}}{\mu_4 \times \mu_4 \times \mu_4}{x}{\left(\hilbert{x}{p}{\fp_1}_4, \hilbert{x}{p}{\fp_2}_4,\hilbert{x}{p}{\fq_{0,1}}_4\right).}\]
		We always have
		\[\hilbert{i}{p}{\Q_p}_4=\hilbert{p}{i}{\Q_p}^{-1}_4
		=i^{-\frac{p-1}{4}}=\pm i. \]
		Let $\tau_1,\tau_2$ be the embeddings corresponding to $\fp_1,\fp_2$ respectively. We assume that  $\tau_1(i)=i$ and $\tau_2(i)=-i$.
		Then
		\[\hilbert{i}{p}{\fp_1}_4= \tau^{-1}_1 \hilbert{\tau_1(i)}{p}{\Q_p}_4=\tau^{-1}_2 \hilbert{\tau_2(i)}{p}{\Q_p}_4=\hilbert{i}{p}{\fp_2}_4=\pm i. \]
		Hence  $\rho_4(i)=(\pm i , \pm i,-1)$ by the product formula. So  $[E_{0,1}: E_{0,1}\cap \bfN K^\times_{2,1}]= |\rho_4(E_{0,1})|=|\langle\rho_4(i)\rangle |=4$.
		
		Now we compute $\rho_4((1+i)^2)$.
		Since
		\[\hilbert{(1-i)^2}{p}{\Q_p}_4 \hilbert{(1+i)^2}{p}{\Q_p}_4 =\hilbert{1-i}{p}{\Q_p}_2 \hilbert{1+i}{p}{\Q_p}_2=\hilbert{2}{p}{\Q_p}_2=-1, \]
		we have
		\[\hilbert{(1-i)^2}{p}{\fp_1}_4=\pm 1, \quad \hilbert{(1-i)^2}{p}{\fp_2}_4=\mp 1. \]
		Hence $\rho_4((1-i)^2)=(\pm1, \mp 1, -1)$. Therefore, $[\Lambda_{0,1} : \Lambda_{0,1}\cap \bfN K^\times_{2,1}]=|\langle \rho_4((1-i)^2),\rho_4(i)\rangle| =8$. This proves (2).

		(3)  follows from the values of the following quadratic Hilbert symbols:
		\[ \hilbert{\zeta_8}{p}{\Q_p(\zeta_8)}_2= \hilbert{-i}{p}{\Q_p}_2=-1, \quad \hilbert{1+\sqrt{2}}{p}{\Q_p(\zeta_8)}_2= \hilbert{-1}{p}{\Q_p}_2=1. \qedhere \]  \end{proof}

	\begin{proof}[Proof of Theorem~\ref{thm:main1} for $p\equiv 5\bmod 8$]
		We first prove that  $2\mid\mid h_{3,2}, 2\mid\mid h_{2,1}$ and $2\nmid h_{1,2}$.
		
		We apply Gras' formula \eqref{eq:Gras} to the case
		\[K_{3,2}/K_{0,2},\ C=\langle \cl(\fq_{3,2}) \rangle,\ D=\langle  \fq_{3,2} \rangle \]
		where $\fq_{n,m}$ is the unique prime ideal  of $K_{n,m}$ above $2$.  Then  $\Lambda_D=\Lambda_{0,2}$ as in Lemma \ref{lem:5mod8 index}.  By the above computation and Lemma \ref{nakayama},   $A_{3,2}=\langle \cl(\fq_{3,2}) \rangle (2)$.  Note that $C$ is invariant under the action of  $G:=\Gal(K_{3,2}/K_{0,2})$.   We have  $A_{3,2}=A_{3,2}^{G}$.   Chevalley's  formula \eqref{chevalley} and the above computation imply that  $|A_{3,2}|=|A_{3,2}^G|=2$.
		
		Similarly,  applying Gras' formula   to the case
		\[ K_{2,1}/K_{0,1},\ C=\langle \cl(\fq_{2,1}) \rangle, D=\langle  \fq_{2,1} \rangle \]
		shows that $A_{2,1}=\langle \cl(\fq_{2,1}) \rangle (2)$. In particular, $A_{2,1}$ is invariant under the action of $\Gal(K_{2,1}/K_{0,1})$.
		Apply Chevalley's formula  to  $K_{2,1}/K_{0,1}$, we  obtain  $|A_{2,1}|=2$.
		
		By Applying  Chevalley's formula to  the extension $K_{1,2}/K_{0,2}$ and  Lemma~\ref{nakayama},   we have $2\nmid h_{1,2}$. Hence $2\nmid h_{1,1}$ by Lemma~\ref{prop: norm_surj}.

		We have $2\mid\mid h_{n,m}$  for $n\ge 2,m\ge1$ by  Proposition~\ref{prop: square_clgp} and  $2\nmid h_{1,m}$ for $n=1,m\geq 1$  by  Proposition~\ref{prop:stable_theorem}.
		
		It remains to prove that $2\nmid h_{n,0}$. The proof consists of three steps:
		
		\noindent \textbf{Step 1}: Let $\epsilon$ be the fundamental unit of $\Q(\sqrt{p})$. We show that  $\hilbert{\epsilon}{\sqrt{p}}{\sqrt{p}}_2=-1$.
		
		Write $\epsilon=\frac{a+b\sqrt{p}}{2}, a,b\in\Z$.
		Then
		\[ \hilbert{\epsilon}{\sqrt{p}}{(\sqrt{p})}_2=
		\hilbert{a/2}{\sqrt{p}}{(\sqrt{p})}_2=\hilbert{a/2}{-p}{p}_2=\leg{a/2}{p}.\]   It is well-known $\bfN_{\Q(\sqrt{p})/\Q}(\epsilon)=(\frac{a}{2})^2-p (\frac{b}{2})^2=-1$. Since  $(\frac{a}{2})^2\equiv -1 \bmod p$ and $p\equiv 5\bmod 8$, we have   $\leg{a/2}{p}\equiv (\frac{a}{2})^{\frac{p-1}{2}}\equiv -1 \bmod p$.
		
		\noindent \textbf{Step 2}: We show that  $[E_{n,0}:E_{n,0}\cap \bfN K^\times_{n+1,0}]=4$ for each $n\geq 1$.
		
		Consider the map as in Lemma~\ref{lem: hasse_norm},
		\[\fct{\rho: E_{n,0}}{\mu_2 \times \mu_2 \times \mu_2}{x}{\left(\hilbert{x}{p^{\frac{1}{2^n}}}{\infty_n}_2, \hilbert{x}{p^{\frac{1}{2^n}}}{(p^{\frac{1}{2^n}})}_2,\hilbert{x}{p^{\frac{1}{2^n}}}{\fq_{n,0}}_2\right),}\]
		where $\infty_n$ is the real place of $K_{n,0}$ such that $\infty_n(p^{\frac{1}{2^n}})<0$.
		We know $[E_{n,0}:E_{n,0}\cap \bfN K^\times_{n,0}]=|\rho(E_{n,0})|$  and $\rho(E_{n,0})\subset (\zeta_2 \times \zeta_2 \times \zeta_2)^{\prod=1}$. In particular, $|\rho(E_{n,0})|\leq 4$.
		
		Since $-1,\epsilon \in E_{n,0}$. It is enough to prove that  $|\pair{ \rho(-1),\rho(\epsilon) }| =4$.
		By \textbf{Step 1}, we have
		\[\hilbert{\epsilon}{p^{\frac{1}{2^n}}}{(p^{\frac{1}{2^n}})}_2=\hilbert{\epsilon}{-p^{\frac{1}{2^{n-1}}}}{(p^{\frac{1}{2^n}})}_2=\cdots = \hilbert{\epsilon}{-\sqrt{p}}{(\sqrt{p})}_2=-1.\]
		Therefore, $\rho(\epsilon)=(\pm 1,-1,\mp1)$. Since $\rho(-1)=(-1,1,-1)$, we have $|\pair{ \rho(-1),\rho(\epsilon) }| =4$ and hence $|\rho(E_{n,0})|=4$.
		
		\noindent \textbf{Step 3}: We prove $2\nmid h_{n,0}$ for any $n\geq 1$.
		
		We prove it by induction on $n$. The case $n=1$ is well-known.
		Assume that  $2\nmid h_{n,0}$.
		The product of  ramification indices of $K_{n+1,0}/K_{n,0}$ is $8$.  Using the result in \textbf{Step } 2, Chevalley's  formula \eqref{chevalley} for the extension $K_{n+1,0}/K_{n,0}$
		and   Lemma~\ref{nakayama} then imply $2\nmid h_{n+1,0}$.
	\end{proof}

	\subsection{The case $p\equiv 7 \bmod 16$} \label{sec:p716}
	The main purpose of this subsection is to prove Theorem  \ref{thm:main1}(3). We first give a  brief description of the proof.
	\begin{itemize}	
		
		\item Apply Gras' formula \eqref{eq:Gras} inductively to the extension $K_{n,0}/K_{n-1,0}$ to show that  $A_{n,0}$ is generated by the unique prime above $2$.  Then apply \eqref{eq:Gras} to $K_{n,1}/K_{n,0}$ to  show that $A_{n,1}$ equals the $2$-primary part of $\pair{\text{classes of primes above $2$}}$. Next  we  apply  Chevalley's formula \eqref{chevalley} to the extensions $K_{3,1}/K_{1,1}$ and $K_{2,1}/K_{1,1}$ to deduce $A_{2,1}\cong A_{3,1}\cong \Z/{2\Z}\times \Z/{2\Z}$.   Proposition~\ref{prop:stable_theorem} then implies $A_{n,1}\cong \Z/{2\Z}\times \Z/{2\Z}$ for $n\geq 2$. Finally from this one can get $A_{n,0}\cong \Z/{2\Z}$ for $n\geq 2$.
		
		\item   Apply  \eqref{eq:Gras} inductively to $K_{1,m}/K_{0,m}$ to show that $A_{1,m}$ is a quotient of $\Z/{2^{m-1}\Z}$, then use Kida's $\lambda$-invariant formula to get $|A_{1,m}|\geq 2^{m-1}$. This leads to $A_{1,m}\cong \Z/{2^{m-1}\Z}$ for any $m\geq 1$.
	\end{itemize}

	For each $n\geq 1$, $K_{n,0}$ has two real places. Let $\infty_n$ be the real place such that $\infty_n(p^{\frac{1}{2^n}})<0$. Then $ \infty_n$ is ramified in $K_{n+1,0}/K_{n,0}$, while the other real place is unramified in $K_{n+1,0}/K_{n,0}$.
	
	The prime $p$ is totally ramified as $p\co_{n,0}=\fp_{n,0}^{2^n}$ in $K_{n,0}$, where $\fp_{n,0}=(p^{\frac{1}{2^n}})$. Since $p$ is inert in $K_{0,1}$, $\fp_{n,0}$ is inert in $K_{n,1}$. Write  $\fp_{n,0}\co_{n,1}=\fp_{n,1}$. The prime $\fp_{0,1}=(p)$ is totally ramified in $K_{\infty,1}/K_{0,1}$.
	
	Since $(x+1)^{2^n}-p$ is a $2$-Eisenstein polynomial, $2$ is totally ramified as $2\co_{n,0}=\fq_{n,0}^{2^n}$ in $K_{n,0}$. Since $2$ splits in $\Q(\sqrt{-p})/\Q$, $\fq_{n,0}$ splits as $\fq_{n,0}\co_{n,1}=\fq_{n,1}\fq_{n,1}'$ in $K_{n,1}/K_{n,0}$ for each $n\geq 1$. The primes $\fq_{1,1}$ and $\fq'_{1,1}$ are totally ramified in $K_{\infty,1}/K_{0,1}$.
	
	The prime $2$ is also totally ramified as $2\co_{0,m}=\fq_{0,m}^{2^n}$ in $K_{0,m}$, where $\fq_{0,m}=(1-\zeta_{2^{m+1}})\co_{0,m}$.  The prime $\fq_{0,m}$ splits as $\fq_{0,m}\co_{1,m}=\fq_{1,m}\fq_{1,m}'$ in $K_{1,m}$ for each $m\geq 1$.

	Since $2\nmid h_{1,0}$, $\fp_{1,0}$ is principal.   If $\pi =u+v\sqrt{p}$ is a generator of $\fp_{1,0}$, we must have  $\bfN(\pi)=u^2-pv^2=2$, since $-2$ is not a square modulo $p$.   If       $\pi$ is  a totally positive generator of $\fp_{1,0}$,   then $\frac{\pi^2}{2}=\epsilon^k$ with $k$ odd, where $\epsilon$ is the fundamental unit of $K_{1,0}$.   Replace the generator $\pi$ by $\pi \epsilon^\frac{1-k}{2}$. We may assume that $\frac{\pi^2}{2}=\epsilon$.  So
	$E_{1,0}= \pair{-1, \frac{\pi^2}{2}}$.

	\begin{lem}\label{lem: h11}
		The class number $h_{1,1}$ of $K_{1,1}=\Q(\sqrt{p},i)$ is odd and
		$E_{1,1}=\pair{\frac{\pi}{1+i}, i}$.
	\end{lem}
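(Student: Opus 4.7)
The plan is to establish the two assertions by separate techniques: the oddness of $h_{1,1}$ via Chevalley's ambiguous class number formula \eqref{chevalley} applied to the quadratic extension $K_{1,1}/K_{0,1}$, and the unit description via the Hasse unit index dichotomy for the CM extension $K_{1,1}/K_{1,0}$.

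For $2\nmid h_{1,1}$, set $G=\Gal(K_{1,1}/K_{0,1})$. The base $K_{0,1}=\Q(i)$ has class number one and is totally complex, so there is no infinite ramification. From the ramification discussion preceding the lemma, $p\co_{0,1}$ is prime (since $p\equiv 3\bmod 4$) and ramifies in $K_{1,1}/K_{0,1}$, while the prime $(1+i)$ above $2$ splits as $\fq_{1,1}\fq_{1,1}'$ and does not contribute, so $\prod_v e_v=2$. For the unit norm index I will exhibit $i$ as a global norm by the computation
\[ \bfN_{K_{1,1}/K_{0,1}}\!\left(\frac{1+i}{\pi}\right)=\frac{(1+i)^2}{\bfN_{K_{1,0}/\Q}(\pi)}=\frac{2i}{2}=i, \]
using the earlier normalization $\bfN_{K_{1,0}/\Q}(\pi)=2$. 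Thus $[E_{0,1}:E_{0,1}\cap \bfN K_{1,1}^\times]=1$, Chevalley's formula yields $|\Cl_{1,1}^G|=1$, and Lemma~\ref{nakayama} concludes $2\nmid h_{1,1}$.

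For the unit structure, first verify $\pi/(1+i)\in E_{1,1}$ via
\[ \left(\frac{\pi}{1+i}\right)^2=\frac{\pi^2}{(1+i)^2}=\frac{2\epsilon}{2i}=-i\epsilon, \]
which is a unit. The extension $K_{1,1}/K_{1,0}=\Q(\sqrt{p})$ is CM, and $\mu(K_{1,1})=\mu_4$: if $\zeta_8\in K_{1,1}$ then $\sqrt{2}=(1-i)\zeta_8\in K_{1,1}$, but the three quadratic subfields of $K_{1,1}$ are $\Q(i),\Q(\sqrt{p}),\Q(\sqrt{-p})$. The Hasse unit index $Q=[E_{1,1}:\mu_4 E_{1,0}]$ is either $1$ or $2$. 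To show $Q=2$ with witness $\pi/(1+i)$, suppose for contradiction that $\pi/(1+i)=\zeta v$ with $\zeta=a+bi\in\mu_4$ and $v\in E_{1,0}$; then
\[ \pi=\zeta v(1+i)=v\bigl((a-b)+(a+b)i\bigr), \]
and $\pi,v\in K_{1,0}$ forces $a+b=0$, impossible for $\zeta\in\{\pm 1,\pm i\}$. Since $(\pi/(1+i))^2=-i\epsilon\in \mu_4 E_{1,0}$, the index equals $2$, and we conclude $E_{1,1}=\pair{i,\pi/(1+i)}$. The main obstacle is the clean identification of $i$ as a global norm, which hinges on the prior careful choice of $\pi$ with $\bfN_{K_{1,0}/\Q}(\pi)=2$ and $\pi^2=2\epsilon$; everything else is a standard application of the CM-Hasse-index dichotomy.
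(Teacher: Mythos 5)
Your proof is correct and follows essentially the same route as the paper: Chevalley's formula plus Lemma~\ref{nakayama} for the oddness of $h_{1,1}$, and the Hasse unit index dichotomy for the CM extension $K_{1,1}/K_{1,0}$ (the paper's citation of Fr\"ohlich--Taylor) with $\pi/(1+i)$ as the witness for index $2$. You simply supply details the paper leaves implicit, namely the explicit norm witness $\bfN\bigl(\tfrac{1+i}{\pi}\bigr)=i$ and the verification that $\pi/(1+i)\notin\mu_4 E_{1,0}$.
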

	\begin{proof} \label{lem: K_{1,1}}
		Apply Chevalley's formula to the extension $K_{1,1}/K_{0,1}$ and Lemma~\ref{nakayama},  one has $2\nmid h_{1,1}$.
		
		By \cite[Theorem 42, Page 195]{FT93},
		\[ [E_{1,1}:\pair{\frac{\pi^2}{2},i}]=1 \text{ or } 2.\]
		Note that $\frac{\pi}{1+i}  $ is a unit and $[\pair{\frac{\pi}{1+i}, i}:\pair{\frac{\pi^2}{2},i}]=2$,  we must have $E_{1,1}=\pair{\frac{\pi}{1+i}, i}$.
	\end{proof}

	\begin{lem}\label{lem: index7mod16}
		We have
		
		$(1)$ $\hilbert{\pi}{\sqrt{p}}{\fp_{1,0}}_2=-1$ and  $\hilbert{\pi}{\sqrt{p}}{\fq_{1,0}}_2=-1$;
		
		$(2)$ $[E_{1,0}:E_{1,0}\cap \bfN K_{2,0}^\times ]=2$;
		
		$(3)$ $[E_{1,1}:E_{1,1}\cap \bfN K_{3,1}^\times ]=4$ and $
		[E_{1,1}:E_{1,1}\cap \bfN K_{2,1}^\times ]=1$.
	\end{lem}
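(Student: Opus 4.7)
The overall plan is to compute the Hilbert symbols in (1) using Proposition~\ref{prop: hil}, and then deduce the norm indices in (2) and (3) via Lemma~\ref{lem: hasse_norm}, which identifies each index with $|\rho(E)|$ where $\rho$ evaluates Hilbert symbols at the ramified places.

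For part (1), at $\fp_{1,0}=(\sqrt{p})$ the local field $\Q_p(\sqrt{p})$ is totally ramified over $\Q_p$ with uniformizer $\sqrt{p}$ and residue field $\F_p$. Since $u^2-pv^2=2$ forces $u\in\Z_p^\times$, the element $\pi$ is a local unit, and the tame formula gives
\[
\hilbert{\sqrt{p}}{\pi}{\fp_{1,0}}_2=\omega(\pi)^{(p-1)/2}=u^{(p-1)/2}\bmod p=\leg{u}{p},
\]
hence $\hilbert{\pi}{\sqrt{p}}{\fp_{1,0}}_2=\leg{u}{p}$ by antisymmetry. To deduce $\leg{u}{p}=-1$, I would exploit $\pi^2=2\epsilon$: writing $\epsilon=a+b\sqrt{p}$ with $a^2-pb^2=1$ (using $\bfN(\epsilon)=1$ for $p\equiv 3\bmod 4$), comparison of rational and irrational parts yields $u^2=a+1$, $pv^2=a-1$, so $u^2=pv^2+2$; a 2-adic analysis based on $p\equiv 7\bmod 16$ should then pin down the residue of $u$ modulo $p$. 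For the symbol at $\fq_{1,0}$, the product formula at places outside $\{\fp_{1,0},\fq_{1,0},\infty_1,\infty_2\}$ gives $+1$ (both arguments are units of odd residue characteristic), and at both real places the total positivity of $\pi$ forces $+1$; combined with the first half, $\hilbert{\pi}{\sqrt{p}}{\fq_{1,0}}_2=-1$.

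For part (2), the ramified places of $K_{2,0}/K_{1,0}$ are $\infty_1,\fp_{1,0},\fq_{1,0}$, and $E_{1,0}=\langle-1,\epsilon\rangle$ with $\epsilon=\pi^2/2$. Since Hilbert symbols of squares are trivial, $\rho(\epsilon)=\rho(2)$, and a direct computation using Proposition~\ref{prop: hil} gives $\rho(2)=(1,1,1)$: positivity of $2$ at $\infty_1$, $\leg{2}{p}=1$ at $\fp_{1,0}$ (since $p\equiv 7\bmod 8$), and at $\fq_{1,0}$ the factorization $2=-i(1+i)^2$ combined with $\hilbert{-i}{i}{\Q_2(i)}_2=1$ and norm-compatibility from $\Q_2(i)$ to $\Q_2$. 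The symbol $\rho(-1)=(-1,-1,1)$ is nontrivial at $\infty_1$ and at $\fp_{1,0}$ (tame formula, $(p-1)/2$ odd) but trivial at $\fq_{1,0}$ since $-1=i^2$ is locally a square. Thus $|\rho(E_{1,0})|=2$. For part (3), I would apply the same strategy to $K_{3,1}/K_{1,1}$ and $K_{2,1}/K_{1,1}$, both Kummer extensions with element $\sqrt{p}$ and ramified only at $\fp_{1,1},\fq_{1,1},\fq'_{1,1}$. Using $E_{1,1}=\langle i,\pi/(1+i)\rangle$ from Lemma~\ref{lem: h11}, I would compute $\rho$ at $\fp_{1,1}$ via the tame formula (e.g.\ $\hilbert{i}{\sqrt{p}}{\fp_{1,1}}_4=i^{(p^2-1)/4}=1$ since $p^2\equiv 1\bmod 16$), at $\fq_{1,1},\fq'_{1,1}$ via direct manipulations in $\Q_2(i)$, and use $\hilbert{x}{y}{v}_4^2=\hilbert{x}{y}{v}_2$ to feed the values from part~(1) into the 4-th order symbols (so $\hilbert{\pi}{\sqrt{p}}{\fq_{1,1}}_4\in\{\pm i\}$). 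Counting image sizes yields $4$ and $1$ respectively.

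The main obstacle is the Legendre symbol identity $\leg{u}{p}=-1$, which encodes the precise arithmetic distinction between $p\equiv 7$ and $p\equiv 15\bmod 16$ (the latter would yield $+1$, consistent with the larger 2-class number in Parry's case (iii)). Deducing it from $u^2=pv^2+2$ under $p\equiv 7\bmod 16$ alone seems to require a refined 2-adic argument or an appeal to biquadratic reciprocity and genus theory for $\Q(\sqrt{p})$.
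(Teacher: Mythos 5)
Your overall framework (identifying each index with $|\rho(E)|$ via Lemma~\ref{lem: hasse_norm} and using the product formula to fill in the symbol at $\fq$) matches the paper's, and part (2) goes through essentially as you describe. But there is a genuine gap at exactly the point you flag as the ``main obstacle'': the identity $\leg{u}{p}=-1$ is the arithmetic heart of part (1), and your proposed ``2-adic analysis'' of $u^2=pv^2+2$ cannot work, because no congruence information about $u$ and $v$ modulo powers of $2$ determines a Legendre symbol with modulus $p$. What is needed is reciprocity to flip the symbol: since $p\equiv 3\bmod 4$ and $u>0$ one has $\leg{u}{p}=\leg{-p}{u}$, and $u^2-pv^2=2$ gives $-p\equiv 2(v^{-1})^2\bmod u$, so $\leg{u}{p}=\leg{2}{u}$. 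One then pins down $u\bmod 8$: from $u^2\equiv 2\bmod v$ the integer $2$ is a square modulo $v$, forcing $v\equiv\pm1\bmod 8$, hence $v^2\equiv 1\bmod{16}$ and $u^2=pv^2+2\equiv 9\bmod{16}$, i.e.\ $u\equiv\pm3\bmod 8$ and $\leg{2}{u}=-1$. This is precisely where $p\equiv 7\bmod{16}$ (rather than $15\bmod{16}$) enters, and it is not recoverable from what you wrote.

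Part (3) is likewise only a plan where the decisive computations are missing. Squaring only determines the quartic symbols at $\fq_{1,1},\fq_{1,1}'$ up to sign inside $\{\pm1\}$ or $\{\pm i\}$, so to get $|\rho(E_{1,1})|=4$ (rather than just $\le 4$) you must actually evaluate $\hilbert{i}{\sqrt{p}}{\fq_{1,1}}_4$ and the $\fp_{1,1}$-component of $\rho\bigl(\frac{\pi}{1+i}\bigr)$. The first is a quartic symbol at the wild place and is not accessible by ``direct manipulations in $\Q_2(i)$'' alone; the paper writes $\sqrt{p}=i^{\pm1}\sqrt{-p}$ with $\sqrt{-p}\in\Q_2$, reduces to $\hilbert{i}{11}{\Q_2(i)}_4$ or $\hilbert{i}{3}{\Q_2(i)}_4$ according to $p\bmod 32$, and evaluates these by the global product formula over $\Q(i)$ (moving the computation to the tame places $11$ or $3$), obtaining $-1$. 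The second requires showing $\pi^{(p^2-1)/4}\equiv 1$ and $(1+i)^{(p^2-1)/4}\equiv-1\bmod\fp_{1,1}$, which uses part (1) together with $(1+i)^{(p^2-1)/4}=(2i)^{(p^2-1)/8}$ and $\leg{2}{p}=1$. Without these values the claimed counts $4$ and $1$ are unsupported.
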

	\begin{proof}
		(1)
		Since $\pi=u+v\sqrt{p}$ is totally positive, we have $u>0$, $u^2-pv^2=2$ and $2\nmid uv$.
		Note that   $2$ is a square modulo $v$, so $v\equiv \pm 1 \bmod 8$.  Then $u^2\equiv 9\bmod 16$ since  $p\equiv 7\bmod 16$.  In other words, $u\equiv \pm 3\bmod 8$.  We  have
		\[ \hilbert{\pi}{\sqrt{p}}{\fp_{1,0}}_2=\hilbert{u}{\sqrt{p}}{\fp_{1,0}}_2
		=\hilbert{u}{-p}{p}_2=\leg{u}{p}=\leg{-p}{u}=\leg{2}{u}=-1. \]
		
		The fourth equality is due to the quadratic reciprocity law.
		We have $\hilbert{\pi}{\sqrt{p}}{\infty_1}_2=1$ as $\pi$ is totally positive, thus $\hilbert{\pi}{\sqrt{p}}{\fq_{1,0}}_2=-1$ by the product formula.

		(2) Since the infinite place $\infty_1$  is ramified,  $-1$ is not a norm of $K_{2,0}$. For the fundamental unit $\frac{\pi^2}{2}$, we have
		\[ \hilbert{\frac{\pi^2}{2}}{\sqrt{p}}{\fp_{1,0}}_2
		=\hilbert{2}{\sqrt{p}}{\fp_{1,0}}_2=\hilbert{2}{-p}{p}_2=1, \quad  \hilbert{\frac{\pi^2}{2}}{\sqrt{p}}{\infty_1}_2=1. \]
		By the product formula,
		\[ \hilbert{\frac{\pi^2}{2}}{\sqrt{p}}{\fq_{1,0}}_2=1. \]
		Then $\frac{\pi^2}{2}$ is a norm of $K_{2,0}$ by Hasse's norm theorem. This proves (2).

		(3) We need to study the map
		\[\fct{\rho: E_{1,1}}{\mu_4 \times \mu_4 \times \mu_4}{x}{\left(\hilbert{x}{\sqrt{p}}{\fp_{1,1}}_4, \hilbert{x}{\sqrt{p}}{\fq_{1,1}}_4,\hilbert{x}{\sqrt{p}}{\fq_{1,1}'}_4\right).}\]
		Then $\rho(E_{1,1})\subset ({\mu_4 \times \mu_4 \times \mu_4})^{\prod=1}$  and
		$[E_{1,1}:E_{1,1}\cap \bfN K^\times_{3,1}]=|\rho(E_{1,1})|$.
		
		We first compute $\rho(i)$. Since $p\equiv 7\bmod 16$ and the residue field of $\fp_{1,1}$ is $\F_{p^2}$, we have
		\[ \hilbert{i}{\sqrt{p}}{\Q_p(\sqrt{p},i)}_4
		=\hilbert{\sqrt{p}}{i}{\Q_p(\sqrt{p},i)}^{-1}_4=i^{-\frac{p^2-1}{4}}=1. \]
		Thus
		\[\hilbert{i}{\sqrt{p}}{\fp_{1,1}}_4=1. \]

		Note that the localization of $K_{1,1}$ at $\fq_{1,1}$  is $\Q_2(\sqrt{p},i)=\Q_2(i)$. Note that $\sqrt{-p}\in \Q_2$.
		Since
		\[\hilbert{i}{i}{\Q_2(i)}_4=\hilbert{i}{-1}{\Q_2(i)}_4\hilbert{i}{-i}{\Q_2(i)}_4=\hilbert{i}{-1}{\Q_2(i)}_4=\hilbert{i}{i}{\Q_2(i)}_2=1,\]
		we have
		\[\hilbert{i}{\sqrt{p}}{\Q_2(i)}_4=\hilbert{i}{\sqrt{-p}}{\Q_2(i)}_4=
		\begin{cases}
		\hilbert{i}{\sqrt{-7}}{\Q_2(i)}_4=\hilbert{i}{11}{\Q_2(i)}_4,  & \text{ if } p\equiv 7 \bmod 32;\\
		\hilbert{i}{\sqrt{-23}}{\Q_2(i)}_4=\hilbert{i}{3}{\Q_2(i)}_4,  & \text{ if } p\equiv 23 \bmod 32. \\
		\end{cases}\]
		Applying the product formula to  the quartic Hilbert symbols on  $\Q(i)$  gives
		\[\hilbert{i}{11}{\Q_2(i)}_4=\hilbert{i}{11}{\Q_{11}(i)}^{-1}_4=i^{-\frac{11^2-1}{4}}=-1,\]
		\[\hilbert{i}{3}{\Q_2(i)}_4=\hilbert{i}{3}{\Q_3(i)}^{-1}_4=i^{-\frac{3^2-1}{4}}=-1.\]
		Therefore, $\hilbert{i}{\sqrt{p}}{\Q_2(i)}_4=-1$ and we have $\rho(i)=(1,-1,-1)$.
		
		Next we compute $\rho(\frac{\pi}{1+i})$.
		By (1),  we have $\pi^{\frac{p-1}{2}}\equiv -1 \bmod \fp_{1,0}$.  Since $p\equiv 7\bmod 16$,   $\pi^\frac{p^2-1}{4}\equiv 1 \bmod \fp_{1,0}$. Hence $\hilbert{\pi}{\sqrt{p}}{\fp_{1,1}}_4=1$. Since  $(1+i)^\frac{p^2-1}{4}=(2i)^\frac{p^2-1}{8}=-2^\frac{p^2-1}{8}\equiv -1 \bmod p $, we have   $\hilbert{1+i}{\sqrt{p}}{\fp_{1,1}}_4=-1$. Thus
		\[ \hilbert{\frac{\pi}{1+i}}{\sqrt{p}}{\fp_{1,1}}_4=-1. \]
		To compute $\hilbert{\frac{\pi}{1+i}}{\sqrt{p}}{\fq_{1,1}}_4$, we first compute its square:
		\[\hilbert{\frac{\pi}{1+i}}{\sqrt{p}}{\fq_{1,1}}^2_4=\hilbert{\frac{\pi}{1+i}}
		{\sqrt{p}}{\fq_{1,1}}_2=\hilbert{\pi}{\sqrt{p}}{\fq_{1,1}}_2 \hilbert{1+i}{\sqrt{p}}{\fq_{1,1}}_2,\]
		Note that $\Q_2(\sqrt{p})=\Q_2(i)$. By part (1) of Lemma \ref{lem: index7mod16} , we have \[1=\hilbert{\pi}{\sqrt{p}}{\fq_{0,1}}_2=\hilbert{\pi}{\sqrt{p}}{\Q_2
			(\sqrt{p})}_2=\hilbert{\pi}{\sqrt{p}}{\fq_{1,1}}_2.\]
		Note that    $\sqrt{-p}\equiv \pm3 \bmod 8$.   So we have the following equality of quadratic Hilbert symbols:
		\[\hilbert{1\pm i}{\sqrt{p}}{\Q_2(i)}_2=\hilbert{1\pm i}{\sqrt{-p}}{\Q_2(i)}_2=\hilbert{2}{\sqrt{-p}}{\Q_2}_2=-1. \]
		Therefore
		\[ \hilbert{\frac{\pi}{1+i}}{\sqrt{p}}{\fq_{1,1}}^2_4=1= \hilbert{\frac{\pi}{1+i}}{\sqrt{p}}{\fq'_{1,1}}^2_4.\]
		
		By the product formula we must have  $\rho(\frac{\pi}{1+i})=(-1,\pm 1,\mp 1)$. Hence $|\rho(E_{1,1})|=4$. This implies $[E_{1,1}:E_{1,1}\cap \bfN K_{3,1}^\times ]=4$.

		To compute $[E_{1,1}:E_{1,1}\cap \bfN K_{2,1}^\times ]$,  we need to consider the following map
		\[\fct{\rho': E_{1,1}}{\mu_2 \times \mu_2 \times \mu_2}{x}{\left(\hilbert{x}{\sqrt{p}}{\fp_{1,1}}_2, \hilbert{x}{\sqrt{p}}{\fq_{1,1}}_2,\hilbert{x}{\sqrt{p}}{\fq_{1,1}'}_2\right).}\]
		Then $\rho'=\rho^2$ by Proposition \ref{prop: hil}(7). Thus $\rho'(i)=\rho(i)^2=(1,1,1)$ and $\rho'(\frac{\pi}{1+i})=\rho(\frac{\pi}{1+i})^2=(1,1,1)$. Therefore $[E_{1,1}:E_{1,1}\cap \bfN K_{2,1}^\times ]=|\rho'(E_{1,1})|=1$.
	\end{proof}

	\begin{prop}\label{prop: gen by 2}
		We have
		
		$(1)$ $A_{n,0}=\pair{\cl(\fq_{n,0})}$ for $n\geq 1$ and $A_{2,0}\cong \Z/{2\Z}$;
		
		$(2)$ $A_{n,1}=\pair{ \cl(\fq_{n,1}),\cl(\fq_{n,1}') }(2)$ for $n\geq 2$.
	\end{prop}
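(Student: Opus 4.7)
Plan. I would prove both parts by applying Gras' formula (Theorem~\ref{Gras's thm}) combined with Nakayama's Lemma (Lemma~\ref{nakayama}), mimicking the strategy already used for the cases $p \equiv 3, 5 \bmod 8$. For part (1) I would induct on $n$. The base case $n = 1$ is immediate: $h_{1,0}$ is odd so $A_{1,0} = 0$, and $\fq_{1,0} = (\pi)$ is principal by the discussion preceding Lemma~\ref{lem: h11}, so $\pair{\cl(\fq_{1,0})} = 0$. For the inductive step, assume $A_{n-1,0} = \pair{\cl(\fq_{n-1,0})}$ and apply Gras' formula to $K_{n,0}/K_{n-1,0}$ with $C = \pair{\cl(\fq_{n,0})}$ and $D = \pair{\fq_{n,0}}$. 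Since $\fq_{n,0}$ is totally ramified, $\bfN\fq_{n,0} = \fq_{n-1,0}$, hence $\bfN C = \pair{\cl(\fq_{n-1,0})}$; by the induction hypothesis its $2$-part equals $|A_{n-1,0}|$, so $h_{n-1,0}/|\bfN C|$ is odd. The ramified places in $K_{n,0}/K_{n-1,0}$ are $\infty_{n-1}$, $\fp_{n-1,0}$ and $\fq_{n-1,0}$, giving $\prod_v e_v = 8$.

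The key computation is $[\Lambda_D : \Lambda_D \cap \bfN K_{n,0}^\times] = 4$. Because $(\pi)\co_{n-1,0} = \fq_{n-1,0}^{2^{n-2}}$, we have $\pi \in \Lambda_D$, so $\Lambda_D \supset \pair{-1, \pi}$. Applying Lemma~\ref{lem: hasse_norm} with $a = p^{1/2^{n-1}}$, I would compute $\rho(-1)$ and $\rho(\pi)$ at each ramified place via Proposition~\ref{prop: hil} and the product formula: at $\infty_{n-1}$, $p^{1/2^{n-1}} < 0$ gives $\hilbert{-1}{p^{1/2^{n-1}}}{\infty_{n-1}}_2 = -1$, while total positivity of $\pi$ gives $\hilbert{\pi}{p^{1/2^{n-1}}}{\infty_{n-1}}_2 = 1$; at $\fp_{n-1,0}$ (residue field $\F_p$, uniformizer $p^{1/2^{n-1}}$), $(-1)^{(p-1)/2} = -1$ since $p \equiv 3\bmod 4$, and the reduction $\pi \equiv u \bmod \fp_{n-1,0}$ combined with Lemma~\ref{lem: index7mod16}(1) gives $\leg{u}{p} = -1$; the components at $\fq_{n-1,0}$ are then forced by the product formula. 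This yields $\rho(-1) = (-1,-1,1)$ and $\rho(\pi) = (1,-1,-1)$, which generate the whole kernel-of-product subgroup of $\mu_2^3$ of order $4$; hence $|\rho(\Lambda_D)| = 4$. Gras' formula then gives $|(\Cl_{n,0}/C)^G| = (h_{n-1,0}/|\bfN C|)\cdot 4 \cdot (1/4)$, whose $2$-part is $1$, and Nakayama delivers $A_{n,0} = C(2)$. Combined with Parry's $|A_{2,0}| = 2$ (and, for $n \geq 3$, the stability in Proposition~\ref{prop: square_clgp} deployed in the subsequent argument), this gives $A_{n,0} = \pair{\cl(\fq_{n,0})}$ and the cyclic structure at $n = 2$.

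For part (2) the proof is parallel but simpler, applied to the Kummer extension $K_{n,1} = K_{n,0}(i)$ over $K_{n,0}$, with $C = \pair{\cl(\fq_{n,1}), \cl(\fq'_{n,1})}$ and $D = \pair{\fq_{n,1}, \fq'_{n,1}}$. Since $\fq_{n,0}$ splits (because $-1$ is a square in $\Q_2(p^{1/2^n})$: for $p \equiv 7\bmod 16$ we have $-p \equiv 1 \bmod 8$, so $\sqrt{-p}\in \Q_2 \subset K_{n,0,\fq_{n,0}}$, giving $i = \sqrt{-p}/\sqrt{p}$), and $\fp_{n,0}$ is inert, the only ramified places are the two real places; thus $\prod_v e_v/[K:F] = 2$. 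Also $\bfN C = \pair{\cl(\fq_{n,0})}$, so by part (1) the ratio $h_{n,0}/|\bfN C|$ is odd. Taking $a = -1$ in Lemma~\ref{lem: hasse_norm}, the map $\rho$ records signs at the two real places; $\rho(-1) = (-1,-1)$ attains the order-$2$ diagonal forced by the product formula, so $[\Lambda_D : \Lambda_D \cap \bfN K_{n,1}^\times] = 2$. Gras' formula gives a ratio with $2$-part $1$, and Nakayama yields $A_{n,1} = C(2)$.

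The main obstacle is the Hilbert-symbol calculation in part (1): one must verify that $\rho(\pi) = (1,-1,-1)$ uniformly in $n$, which rests on checking that the residue-class reduction $\pi \equiv u \bmod \fp_{n-1,0}$ persists at every layer (it does, since $\sqrt{p} \in \fp_{n-1,0}$) and that the sign of $\pi$ at $\infty_{n-1}$ is controlled by total positivity independently of $n$. Tracking the embedding $\infty_{n-1}$, along which $p^{1/2^{n-1}}$ is negative but $\sqrt{p} = (p^{1/2^{n-1}})^{2^{n-2}}$ is positive for $n \geq 3$ and negative for $n = 2$, is the subtle point; fortunately in both cases $\pi$ stays positive, so the sign component is $1$ uniformly.
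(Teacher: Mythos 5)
Your proposal is correct and follows essentially the same route as the paper: induction on $n$ via Gras' formula with $C=\pair{\cl(\fq_{n,0})}$, the identical Hilbert-symbol values $\rho(-1)=(-1,-1,1)$ and $\rho(\pi)=(1,-1,-1)$ forcing $[\Lambda_D:\Lambda_D\cap\bfN K^\times]=4$, and for part (2) the observation that only the two real places ramify so that $\rho(-1)=(-1,-1)$ gives index $2$. The only cosmetic differences are that you quote Parry for $|A_{2,0}|=2$ where the paper rederives it from Chevalley's formula and Lemma~\ref{lem: index7mod16}(2), and you should state explicitly that $\fq_{n,0}^{2^{n-1}}=(\pi)$ makes $\pair{\cl(\fq_{n,0})}$ a $2$-group, so that Nakayama's conclusion $A_{n,0}=C(2)$ upgrades to $A_{n,0}=C$ as needed to run the induction.
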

	\begin{proof}
		(1)	We prove this  by induction.  The case $n=1$ is well-known.  Suppose the result holds for $n$.
		We apply Gras' formula \eqref{eq:Gras} to 
		$$K_{n+1,0}/K_{n,0}, C=\pair{\cl(\fq_{n+1,0})}, D=\langle \fq_{n+1,0}\rangle.$$
		Note that    $\bfN(C)=\pair{\cl(\fq_{n,0})}=A_{n,0}$ by the assumption.   The product of ramification indices is $8$.    Consider the map
		\[\fct{\rho:=\rho_{D, K_{n+1,0}/K_{n,0}}: \Lambda_{D}}{\mu_2 \times \mu_2 \times \mu_2}{x}{\left(\hilbert{x}{p^{\frac{1}{2^n}}}{\infty_n}_2, \hilbert{x}{p^{\frac{1}{2^n}}}{\fp_{n,0}}_2,\hilbert{x}{p^{\frac{1}{2^n}}}{\fq_{n,0}}_2\right).}\]
		We have $|\rho(\Lambda_D)|=[\Lambda_{D}:\Lambda_{D}\cap \bfN K^\times_{n+1,0}]$ and $\rho(\Lambda_D)\subset (\mu_2 \times \mu_2 \times \mu_2)^{\prod=1}$, in particular, $|\rho(\Lambda_D)|\leq 4$. Notice that  $\Lambda_{D}\supset \pair{\pi,\frac{\pi^2}{2},-1}$.
		
		Since $\infty_n(p^{\frac{1}{2^n}})<0$,
		\[ \hilbert{-1}{p^{\frac{1}{2^n}}}{\infty_n}_2=-1.\]
		By the norm-compatibility of Hilbert symbols,
		\[ \hilbert{-1}{p^{\frac{1}{2^n}}}{\fp_{n,0}}_2=\hilbert{-1}{p^{\frac{1}{2^{n-1}}}}
		{\fp_{n-1,0}}_2=\cdots = \hilbert{-1}{- p}{(p)}_2=-1.\]
		Then $\rho(-1)=(-1,-1,1)$.  Since $\pi$ is totally positive,
		\[ \hilbert{\pi}{p^{\frac{1}{2^n}}}{\infty_n}_2=1.\]
		By the  norm-compatibility of Hilbert symbols and the above Lemma,
		\[ \hilbert{\pi}{p^{\frac{1}{2^n}}}{\fp_{n,0}}_2=\hilbert{\pi}{(-1)^{n-1}
			\sqrt{p}}{\fp_{1,0}}_2=-1. \]
		Hence $\rho(\pi)=(1,-1,-1)$.  Therefore $|\rho(\Lambda_D)|\geq |\langle \rho(\pi),\rho(-1)\rangle|= 4$.  This shows that $|\rho(\Lambda_D)|= 4$. Then Gras' formula  and Lemma \ref{nakayama} tell us $A_{n+1,0}=\pair{\cl(\fq_{n+1,0})}(2).$ Note that $\fq^{2^n}_{n+1,0}=\fq_{1,0}=(\pi)$, so $\pair{\cl(\fq_{n+1,0})}(2)=\pair{\cl(\fq_{n+1,0})}$.  By induction, $A_{n+1,0}=\pair{\cl(\fq_{n+1,0})}.$
		
		In particular, $A_{2,0}$ is invariant under the action of $\Gal(K_{2,0}/K_{1,0})$.  Since  $E_{1,0}=\langle -1,\frac{\pi^2}{2} \rangle$,  and  $[E_{1,0}:E_{1,0}\cap \bfN K^\times_{2,0}]=2$ by the above Lemma. Applying Chevalley's formula~\eqref{chevalley} to $K_{2,0}/K_{1,0}$  gives $A_{2,0}\cong \Z/{2\Z}$.
		
		(2) We apply Gras' formula to
		\[K_{n,1}/K_{n,0}, C=\langle \cl(\fq_{n,1}),\cl(\fq'_{n,1}) \rangle, D=\langle \fq_{n,1},\fq'_{n,1} \rangle.\]
		Then $\bfN C= \langle \cl(\fq_{n,0})\rangle=A_{n,0}$ by (1). Only the two infinite places are  ramified in $K_{n,1}/K_{n,0}$,  so   $-1$ is not a norm. This shows that the index $[\Lambda_D: \Lambda_D \cap \bfN K^\times_{n+1,0}]\geq 2$. By Gras' formula and Lemma \ref{nakayama},  $A_{n,1}=\langle \cl(\fq_{n,1}),\cl(\fq'_{n,1}) \rangle(2)$.
	\end{proof}
	
	\begin{thm}\label{thm: 7mod16}
		For $p\equiv 7\mod{16}$, we have $A_{n,1}\cong \Z/{2\Z}\times\Z/{2\Z}$ and $A_{n,0}\cong \Z/{2\Z}$ for any $n\geq 2$.
	\end{thm}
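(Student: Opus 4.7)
The plan is to combine Chevalley's formula with the description of $A_{n,1}$ from Proposition~\ref{prop: gen by 2}(2), then bootstrap to all $n\geq 2$ via Proposition~\ref{prop:stable_theorem}, and finally read off the structure of $A_{n,0}$ from $A_{n,1}$ by norm surjectivity.

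First I would pin down $|A_{2,1}|=|A_{3,1}|=4$ by applying Chevalley's formula~\eqref{chevalley} to the cyclic extensions $K_{2,1}/K_{1,1}$ and $K_{3,1}/K_{1,1}$, of degrees $2$ and $4$ respectively. In both the only ramified places are the three finite primes $\fp_{1,1},\fq_{1,1},\fq'_{1,1}$ of $K_{1,1}$, all totally ramified in $K_{\infty,1}/K_{1,1}$, while the infinite places stay unramified since $K_{1,1}$ is totally complex. Feeding in $h_{1,1}$ odd (Lemma~\ref{lem: h11}) together with the unit indices $1$ and $4$ from Lemma~\ref{lem: index7mod16}(3) gives, on $2$-parts, $|A_{2,1}^G|=2^3/(2\cdot 1)=4$ and $|A_{3,1}^{G'}|=4^3/(4\cdot 4)=4$. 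By Proposition~\ref{prop: gen by 2}(2), $A_{n,1}$ is generated by the classes of $\fq_{n,1}$ and $\fq'_{n,1}$; each is the unique (totally ramified) prime of $K_{n,1}$ above $\fq_{1,1}$ or $\fq'_{1,1}$, and so is fixed as an ideal by $\Gal(K_{n,1}/K_{1,1})$. Hence $A_{n,1}\subseteq A_{n,1}^{\Gal(K_{n,1}/K_{1,1})}$, forcing $|A_{2,1}|=|A_{3,1}|=4$.

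Next I would identify the group structures. Since $\cl(\fq_{1,1})$ has odd order in $\Cl_{K_{1,1}}$ and $\fq_{1,1}\co_{2,1}=\fq_{2,1}^2$, the image of $\cl(\fq_{2,1})^2$ in the $2$-part $A_{2,1}$ is trivial, and likewise for $\fq'_{2,1}$. So both generators of $A_{2,1}$ have order at most $2$, forcing $A_{2,1}\cong\Z/2\Z\times\Z/2\Z$. The surjective norm $A_{3,1}\twoheadrightarrow A_{2,1}$ (Lemma~\ref{prop: norm_surj}) between groups of equal order $4$ is then an isomorphism, giving $A_{3,1}\cong\Z/2\Z\times\Z/2\Z$ as well. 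To propagate to all $n\geq 2$, I would apply Proposition~\ref{prop:stable_theorem} inductively to each cyclic $\Z/4\Z$-extension $K_{n+2,1}/K_{n,1}$: the hypothesis $\RamHyp$ holds because $\fp_{n,1},\fq_{n,1},\fq'_{n,1}$ remain totally ramified in $K_{\infty,1}/K_{1,1}$, and the input $|A_{n,1}|=|A_{n+1,1}|=4$ propagates from the base $n=2$. This yields $A_{n,1}\cong\Z/2\Z\times\Z/2\Z$ for every $n\geq 2$.

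Finally, Proposition~\ref{prop: gen by 2}(1) makes $A_{n,0}$ cyclic, and the norm $A_{n,1}\twoheadrightarrow A_{n,0}$ is surjective by Lemma~\ref{prop: norm_surj} (since $K_{n,1}/K_{n,0}$ is ramified at infinity), so $A_{n,0}$ is a cyclic quotient of $\Z/2\Z\times\Z/2\Z$, hence of order at most $2$; combined with $|A_{n,0}|\geq|A_{2,0}|=2$ from the surjective norm $A_{n,0}\twoheadrightarrow A_{2,0}$ (the tower $K_{n,0}/K_{2,0}$ being totally ramified at $p$), this forces $A_{n,0}\cong\Z/2\Z$. The main obstacle is the base structural identification $A_{2,1}\cong(\Z/2\Z)^2$ rather than $\Z/4\Z$, which rests on the order-$2$ relation $\fq_{2,1}^2=\fq_{1,1}\co_{2,1}$ combined with the oddness of $h_{1,1}$ from Lemma~\ref{lem: h11}.
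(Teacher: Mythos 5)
Your proposal is correct and follows essentially the same route as the paper: Chevalley's formula on $K_{2,1}/K_{1,1}$ and $K_{3,1}/K_{1,1}$ with the unit indices from Lemma~\ref{lem: index7mod16}(3), the identification $A_{n,1}=A_{n,1}^{G}$ via Proposition~\ref{prop: gen by 2}, the exponent bound from $\fq_{2,1}^2=\fq_{1,1}\co_{2,1}$ and the oddness of $h_{1,1}$, propagation by Proposition~\ref{prop:stable_theorem}, and the deduction of $A_{n,0}\cong\Z/2\Z$ from cyclicity plus the two norm surjections. No gaps.
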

	\begin{proof}
		The extension $K_{\infty,1}/K_{1,1}$ satisfies  $\RamHyp$ and  $\Gal(K_{n+2,1}/K_{n,1})$ is cyclic of order $4$ for each $n\geq 1$. By Proposition \ref{prop:stable_theorem}, to show  $A_{n,1}\cong \Z/{2\Z}\times\Z/{2\Z}$, it suffices  to show   $A_{2,1}\cong A_{3,1}\cong \Z/{2\Z}\times \Z/{2\Z}$.
		
		Let $G_{2,1}=\Gal(K_{2,1}/K_{1,1})$.
		By Proposition~\ref{prop:  gen by 2}, $A_{2,1}=\pair{ \cl(\fq_{2,1}),\cl(\fq_{2,1}') }(2)=A_{2,1}^{G_{2,1}}$. Since $h_{1,1}$ is odd,  $\cl(\fq_{2,1})^2=\cl(\fq_{1,1}\co_{2,1})$ has odd order. In other words, $A_{2,1}$ is a quotient of $\Z/{2\Z}\times \Z/{2\Z}$. Note that $A_{2,1}=A_{2,1}^{G_{2,1}}$.  The product of ramification indices of $K_{2,1}/K_{1,1}$ is $8$. By Lemma \ref{lem: index7mod16} and   Chevalley's formula \eqref{chevalley} for $K_{2,1}/K_{1,1}$, we obtain    $|A_{2,1}|=|A_{2,1}^{G_{2,1}}|=4$.  So $A_{2,1}\cong \Z/{2\Z}\times \Z/{2\Z}$.
		
		By Proposition \ref{prop: gen by 2},  $A_{3,1}=A_{3,1}^{G_{3,1}}$ where $G_{3,1}=\Gal(K_{3,1}/K_{1,1})$. The product of ramification indices of $K_{3,1}/K_{1,1}$ is $64$.  By Lemma \ref{lem: index7mod16} and Chevalley's formula for   $K_{3,1}/K_{1,1}$,  we get $|A_{3,1}|=|A_{3,1}^{G_{3,1}}|=4$.  Since the norm map from $A_{3,1}$ to $A_{2,1}$ is surjective by Lemma \ref{prop: norm_surj}, we must have $A_{3,1} \cong \Z/{2\Z}\times \Z/{2\Z}$.

		Now we compute $A_{n,0}$. Since $K_{n,1}/K_{n,0}$ is ramified at the real places, the norm map from $A_{n,1}$ to $A_{n,0}$ is surjective by Lemma \ref{prop: norm_surj}. In particular, $A_{n,0}$ is a quotient of $\Z/{2\Z}\times \Z/{2\Z}$. We know that $A_{n,0}$ is cyclic by Proposition~\ref{prop: gen by 2}.
		Since the norm map from $A_{n,0}$ to $A_{2,0}\cong \Z/{2\Z}$ is  surjective, we must have $A_{n,0}\cong \Z/{2\Z}$ for $n\geq 2$.
	\end{proof}
	
	To compute the $2$-class group of  $K_{1,m}$ for $m\geq 1$, we first note that $K_{1,m}$ is  the $m$-th layer of the cyclotomic $\Z_2$-extension of $K_{1,1}$.
	
	\begin{prop}
		For $p\equiv 7\mod{16}$, we have $A_{1,m}=\pair{\cl(\fq_{1,m})}(2)$ for $m\ge 1$.
	\end{prop}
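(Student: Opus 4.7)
The plan is to proceed by induction on $m \geq 1$. The base case $m = 1$ is immediate from Lemma~\ref{lem: h11}, which shows that $A_{1,1}$ is trivial, so both sides of the desired identity vanish.

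For $m \geq 2$, assume the inductive hypothesis $A_{1,m-1} = \langle \cl(\fq_{1,m-1}) \rangle(2)$, and apply Gras' formula~\eqref{eq:Gras} to the quadratic extension $K_{1,m}/K_{1,m-1}$ with $C = \langle \cl(\fq_{1,m}), \cl(\fq'_{1,m}) \rangle$ and $D = \langle \fq_{1,m}, \fq'_{1,m} \rangle$. Since $\fq_{1,m-1}$ and $\fq'_{1,m-1}$ are totally ramified (each with index $2$) in $K_{1,m}/K_{1,m-1}$, we have $\bfN C = \langle \cl(\fq_{1,m-1}), \cl(\fq'_{1,m-1}) \rangle$. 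Weber's theorem says $h_{0,m-1}$ is odd, so $\cl(\fq_{0,m-1}\co_{1,m-1}) = \cl(\fq_{1,m-1})\cl(\fq'_{1,m-1})$ has odd order; together with the inductive hypothesis this forces $(\bfN C)(2) = \langle \cl(\fq_{1,m-1}) \rangle(2) = A_{1,m-1}$, so $v_2(|\Cl_{K_{1,m-1}}|/|\bfN C|) = 0$. Since $K_{1,m-1}$ is totally complex and only the two primes above $2$ ramify in the cyclotomic step $K_{1,m}/K_{1,m-1}$, the ramification contribution is $\prod_v e_v / [K_{1,m}:K_{1,m-1}] = 4/2 = 2$.

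By Lemma~\ref{nakayama}, it suffices to establish the unit-index bound $[\Lambda_D : \Lambda_D \cap \bfN K_{1,m}^\times] \geq 2$ in order to conclude that the Gras quotient is odd and hence $A_{1,m} = C(2)$; the observation that $\cl(\fq_{0,m}\co_{1,m}) = \cl(\fq_{1,m})\cl(\fq'_{1,m})$ has odd order (as $h_{0,m}$ is odd) then gives $C(2) = \langle \cl(\fq_{1,m}) \rangle(2)$, closing the induction. The hard part is this unit-index bound. By Lemma~\ref{lem: hasse_norm}, it reduces to producing a unit $u \in E_{1,m-1} \subset \Lambda_D$ whose local Hilbert symbol $\hilbert{u}{\zeta_{2^m}}{\fq_{1,m-1}}_2$ equals $-1$ in the totally ramified quadratic extension $\Q_2(\zeta_{2^{m+1}})/\Q_2(\zeta_{2^m})$. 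Natural candidates include units from $E_{1,1} = \langle i, \pi/(1+i)\rangle$ (Lemma~\ref{lem: h11}) pulled up to $E_{1,m-1}$, possibly combined with cyclotomic units of $K_{0,m-1}$; via the norm-compatibility of Hilbert symbols (Proposition~\ref{prop: hil}(6)) one expects to push the computation down to $K_{1,1}$ or $K_{0,1}$ and thereby leverage the calculations of Lemma~\ref{lem: index7mod16}.
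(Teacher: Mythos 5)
Your inductive skeleton (Gras' formula \eqref{eq:Gras} applied to each cyclotomic step $K_{1,m}/K_{1,m-1}$ with $C$ generated by the classes of the primes above $2$, plus Lemma~\ref{nakayama}) is correct, and your bookkeeping of the class-number and ramification factors is fine. The gap is exactly where you flag it, and it is fatal as proposed: the witnesses you suggest for the bound $[\Lambda_D:\Lambda_D\cap\bfN K_{1,m}^\times]\ge 2$ do not exist. Already at the first step $m=2$ one has $[E_{1,1}:E_{1,1}\cap\bfN K_{1,2}^\times]=1$. Indeed, a posteriori $A_{1,2}=\pair{\cl(\fq_{1,2})}(2)\cong\Z/2\Z$ (Theorem~\ref{thm: 7mod16 cyc}) and $\cl(\fq_{1,2})$ is fixed by $G=\Gal(K_{1,2}/K_{1,1})$, so $2\mid|\Cl_{1,2}^G|$; Chevalley's formula \eqref{chevalley} gives $|\Cl_{1,2}^G|\cdot[E_{1,1}:E_{1,1}\cap\bfN K_{1,2}^\times]=2h_{1,1}$ with $h_{1,1}$ odd, forcing the unit index to be $1$. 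So every unit of $K_{1,1}$ has trivial symbol at both $\fq_{1,1}$ and $\fq'_{1,1}$. The cyclotomic elements are also useless ($1-\zeta_{2^m}=\bfN(1-\zeta_{2^{m+1}})$ is a local norm at both primes above $2$), and so is $\pi$: writing $\fq_{1,1}^{h}=(\gamma)$ with $h$ the odd order of $\cl(\fq_{1,1})$, one checks via the product formula that $\hilbert{\pi}{-i}{\fq_{1,1}}_2^{h}=\hilbert{\gamma}{-i}{\fq_{1,1}}_2^2=1$, so $\hilbert{\pi}{-i}{\fq_{1,1}}_2=1$. The only elements of $\Lambda_D$ with nontrivial symbol are the (non-explicit) generators $\gamma$ of odd powers of a single prime above $2$, so a direct local computation cannot get off the ground. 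Note also that Lemma~\ref{lem: index7mod16} cannot be ``leveraged'' here: it computes symbols $\hilbert{\cdot}{\sqrt{p}}{\cdot}$ attached to the extensions $K_{n,1}/K_{1,1}$ obtained by adjoining roots of $p$, whereas your symbols have second entry $-i$ (resp.\ $\zeta_{2^m}$); the two norm groups are unrelated.

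The paper avoids ever computing a wild symbol at $2$. The case $m=2$ is settled through the \emph{other} quadratic subextension $K_{1,2}/K_{0,2}$, whose ramified primes lie above $p$; there the required nontriviality is the tame computation $\hilbert{1-\zeta_8}{p}{\fp_{0,2}}_2=-1$, which comes down to $\zeta_{16}+\zeta_{16}^{-1}\notin\Q_p$ for $p\equiv 7\bmod{16}$. Gras' formula for $K_{1,2}/K_{1,1}$ is then read \emph{backwards} to deduce $[\Lambda_{D_1}:\Lambda_{D_1}\cap\bfN K_{1,2}^\times]=2$, and this index is propagated up the tower using the inclusion $\Lambda_{D_{m-1}}\subset\Lambda_{D_{m}}$ and the norm-compatibility of Hilbert symbols (the Kummer generator at level $m$ norms down to the one at level $m-1$), so no further computation is needed. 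To repair your argument, replace the unit-hunting step by this detour through $K_{1,2}/K_{0,2}$.
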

	\begin{proof}
		We  first reduce the result to the case $m=2$.   Suppose $A_{1,2}=\pair{\cl(\fq_{1,2})}(2)$.  Note that $K_{1,\infty}/K_{1,1}$ is totally ramified at $\fq_{1,1}$ and  $\fq'_{1,1}$, and unramified outside $\fq_{1,1}$ and $\fq'_{1,1}$.   Applying Gras' formula \eqref{eq:Gras} to		\[K_{1,2}/K_{1,1},\ C_1=\pair{\cl(\fq_{1,2})},\ D_1=\langle \fq_{1,2}\rangle\]
		gives
		\[[\Lambda_{D_1}:\Lambda_{D_1}\cap \bfN K^\times_{1,2}]=2.\]
		Next we apply Gras' formula to
		\[K_{1,3}/K_{1,2},\ C_2=\pair{\cl(\fq_{1,3})},\ D_2=\langle \fq_{1,3}\rangle. \]
		Note that  $\bfN(C)(2)=A_{1,2}$. To prove $A_{1,3}=C_2$, we need to  prove that $[\Lambda_{D_2}:\Lambda_{D_2}\cap \bfN K^\times_{1,3}]=2$ by Lemma \ref{nakayama}.  Note that  $K_{1,2}=K_{1,1}(\sqrt{-i})$ and $K_{1,3}=K_{1,2}(\sqrt{\zeta_8})$.   We need to study the following two maps:
		\[\fct{\rho_1=\rho_{D_1, K_{1,2}/K_{1,1}}: \Lambda_{D_1}}{\mu_2 \times \mu_2 }{x}{\left(\hilbert{x}{-i}{\fq_{1,1}}_2, \hilbert{x}{-i}{\fq'_{1,1}}_2\right)}\]
		and
		\[\fct{\rho_2=\rho_{D_2, K_{1,3}/K_{1,2}}: \Lambda_{D_2}}{\mu_2 \times \mu_2 }{x}{\left(\hilbert{x}{\zeta_8}{\fq_{1,2}}_2, \hilbert{x}{\zeta_8}{\fq'_{1,2}}_2\right).}\]
		
		We have  $|\rho_2(\Lambda_2)|=[\Lambda_{D_2}:\Lambda_{D_2}\cap \bfN K^\times_{1,3}]\leq 2$ by Lemma~\ref{lem: hasse_norm}.
		Note that $\Lambda_{D_1}\subset \Lambda_{D_2}$. By the norm-compatible property of Hilbert symbols, $\hilbert{x}{\zeta_8}{\fq_{1,2}}_2=\hilbert{x}{-i}{\fq_{1,1}}_2$.  So  the following  diagram is commutative:		
		\[\xymatrix{
			\Lambda_{D_2} \ar[r]^-{\rho_{2}}&  \mu_2\times \mu_2\\
			\Lambda_{D_1} \ar[ru]^-{\rho_{1}} \ar@{^(->}[u] &
		}\]
		Thus  $2=|\rho_1(\Lambda_{D_1})| \leq |\rho_2(\Lambda_{D_2})|\leq 2 $ and $[\Lambda_{D_2}:\Lambda_{D_2}\cap \bfN K^\times_{1,3}]=2$, which implies that  $A_{1,3}=\pair{\cl(\fq_{1,3})}(2)$ by Lemma~\ref{nakayama}.  Repeating this argument, we get $A_{1,m}=\pair{\cl(\fq_{1,m})}(2)$ for $m\geq 2$.
		
		Consider the case
		\[ K/F=K_{1,2}/K_{0,2},\ C=\langle \cl(\fq_{1,2})\rangle,\ D=\langle \fq_{1,2}\rangle.\]
		Note that $C$ is a $\Gal(K_{1,2}/K_{0,2})$-submodule of $A_{1,2}$, since for $\sigma \in \Gal(K_{1,2}/K_{0,2})$, $\sigma(\fq_{1,2})\fq_{1,2}=\fq_{0,2}\co_{1,2}=(1-\zeta_8)\co_{1,2}$, in other words, $\sigma(\cl(\fq_{1,2}))=\cl(\fq_{1,2})^{-1}$. If we can show $[\Lambda_D:\Lambda_D\cap \bfN K^\times_{1,2}]=2$, then by Gras' formula \eqref{eq:Gras} and Lemma~\ref{nakayama}, we have  $A_{1,2}=\pair{\cl(\fq_{1,2})}(2)$.
		
		Note that  $\Lambda_D=\langle 1-\zeta_8,\zeta_8,1+\sqrt{2}\rangle$ and the ramified places in $K_{1,2}/K_{0,2}$ are $\fp_{0,2}$ and $\fp'_{0,2}$,  where $\fp_{0,2}\fp'_{0,2}=p\co_{0,2}$. By Lemma~\ref{lem: hasse_norm}, for the map
		\[\fct{\rho= \rho_{D, K_{1,2}/K_{0,2}}:\Lambda_D}{\mu_2\times \mu_2}{x}{\left(\hilbert{x}{p}
			{\fp_{0,2}}_2,\hilbert{x}{p}{\fp'_{0,2}}_2\right),}\]
		we have $|\rho(\Lambda_D)|=[\Lambda_D:\Lambda_D\cap \bfN K^\times_{1,2}]\leq 2$.  To show $|\rho(\Lambda_D)|=2$, it suffices to show that $\rho$ is not trivial.
		Let us compute $\rho(1-\zeta_8)$. For $p\equiv 7 \bmod 16$, the conjugate of $\zeta_8$ over $\Q_p$ is $\zeta_8^{-1}$.
		By the norm-compatible property of Hilbert symbols, we have
		\[ \hilbert{1-\zeta_8}{p}{\fp_{0,2}}_2=
		\hilbert{1-\zeta_8}{p}{\Q_p(\zeta_8)}_2 =\hilbert{(1-\zeta_8)(1-\zeta_8^{-1})}{p}{\Q_p}_2
		=\hilbert{2+\zeta_8+\zeta^{-1}_8}{p}{\Q_p}_2.
		\]
		By Hensel's Lemma, we have
		\[\hilbert{2+\zeta_8+\zeta^{-1}_8}{p}{\Q_p}_2=1 \Leftrightarrow 2+\zeta_8+\zeta^{-1}_8 \bmod p \text{ is a square } \Leftrightarrow 2+\zeta_8+\zeta^{-1}_8\in (\Q^\times_p)^2 .\]
		Notice that $(\zeta_{16}+\zeta_{16}^{-1})^2=2+\zeta_8+\zeta^{-1}_8$.
		Since $p\equiv 7\bmod{16}$, $\Frob_p(\zeta_{16}+\zeta_{16}^{-1})=\zeta_{16}^7+\zeta_{16}^{-7}=-(\zeta_{16}+\zeta_{16}^{-1})$, where $\Frob_p$ is the Frobenius element of $\Gal(\overline{\Q}_p/\Q_p)$.
		Thus $\zeta_{16}+\zeta_{16}^{-1}\notin\Q_p$ and we have $\hilbert{1-\zeta_8}{p}{\fp_{0,2}}_2=-1$.
	\end{proof}
	
	\begin{thm}\label{thm: 7mod16 cyc} For $p\equiv 7\mod{16}$ and $m\geq 1$, $A_{1,m}\cong \Z/2^{m-1}\Z$.
	\end{thm}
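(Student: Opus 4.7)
The plan is to pin down $|A_{1,m}|$ and combine with the cyclicity from the preceding proposition to conclude $A_{1,m}\cong\Z/2^{m-1}\Z$. The argument splits into three steps: an upper bound $|A_{1,m}|\mid 2^{m-1}$, an unboundedness result coming from Kida's formula, and a stability-plus-induction step tying them together.

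First I will establish the upper bound. Since $\fq_{1,1}$ is totally ramified in $K_{1,m}/K_{1,1}$, the equality $\fq_{1,m}^{2^{m-1}}=\fq_{1,1}\co_{1,m}$ gives $\cl(\fq_{1,m})^{2^{m-1}}=j_{1,m}(\cl(\fq_{1,1}))$, where $j_{1,m}\colon\Cl_{1,1}\to\Cl_{1,m}$ is the extension-of-ideals map. As $h_{1,1}$ is odd by Lemma~\ref{lem: h11}, this image has odd order, forcing the $2$-part of the order of $\cl(\fq_{1,m})$ to divide $2^{m-1}$, i.e.\ $|A_{1,m}|\mid 2^{m-1}$.

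Next I will show $|A_{1,m}|\to\infty$ by applying the $2$-adic Kida formula (due to Kida and Sinnott) to the degree-$2$ extension of CM fields $K_{1,1}/\Q(i)$; both contain the cyclotomic $\Z_2$-extension of $\Q$. By Weber's theorem $\lambda_2^-(\Q(i)_\infty)=\mu_2^-(\Q(i)_\infty)=0$, and Ferrero's theorem gives $\mu_2^-(K_{1,\infty}/K_{1,1})=0$ since $K_{1,1}$ is abelian over $\Q$. A direct computation using $v_2(p^2-1)=4$ (forced by $p\equiv 7\bmod 16$) shows that $p$ has exactly two places in $\Q_\infty^{\mathrm{cyc}}$, each totally ramified in $\Q(\sqrt{p})_\infty=K_{1,\infty}^+$, and they contribute $\sum(e_w-1)=2$ to the ramification term. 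Feeding this into Kida's formula yields $\lambda_2^-(K_{1,\infty}/K_{1,1})=1$; in particular $|A_{1,m}|\to\infty$.

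To finish, observe that $K_{1,\infty}/K_{1,1}$ satisfies $\RamHyp$ because $\fq_{1,1}$ and $\fq_{1,1}'$ are totally ramified, so the Fukuda-type stability of Remark~\ref{rmk:stable} applies: if $|A_{1,m}|=|A_{1,m-1}|$ for some $m\ge 2$, then $|A_{1,n}|$ is constant for every $n\ge m-1$, contradicting unboundedness. Hence $|A_{1,m}|>|A_{1,m-1}|$ for all $m\ge 2$. On the other hand, $\cl(\fq_{1,m})^{2}=j_{m-1,m}(\cl(\fq_{1,m-1}))$ together with the cyclicity of $A_{1,m}$ yields $|A_{1,m}|\mid 2|A_{1,m-1}|$, so the ratio at each step is exactly $2$. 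Induction from $|A_{1,1}|=1$ gives $|A_{1,m}|=2^{m-1}$, and the cyclicity already established forces $A_{1,m}\cong\Z/2^{m-1}\Z$.

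The main technical obstacle is the Kida step at $p=2$: the Ferrero–Sinnott version carries a $\delta$-correction term (equal to $1$ here, since $-1$ lies in every base field), and one must carefully decide whether the ramification sum runs over primes of $L_\infty^+$ or $L_\infty$ and count them accordingly. The identification of the ramified places via $v_2(p^2-1)=4$ is elementary but requires tracking decomposition through a tower of several $\Z_2$-extensions.
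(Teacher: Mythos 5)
Your proof is correct and follows essentially the same route as the paper: the upper bound $|A_{1,m}|\mid 2^{m-1}$ coming from $A_{1,m}=\langle\cl(\fq_{1,m})\rangle(2)$, $\fq_{1,m}^{2^{m-1}}=\fq_{1,1}$ and the oddness of $h_{1,1}$; unboundedness of the $2$-class numbers from a Kida $\lambda$-invariant computation; and the Fukuda-type stability of Proposition~\ref{prop:stable_theorem} to rule out the sequence stalling. The only difference is cosmetic: the paper obtains $\lambda=1$ by applying Kida's formula for imaginary quadratic fields directly to $\Q(\sqrt{-p})_\infty$ and transfers the unboundedness to $K_{1,m}$ via Lemma~\ref{prop: norm_surj}, whereas you run the relative CM Kida--Ferrero formula on $K_{1,1}/\Q(i)$ (your count of two places of $\Q_\infty$ over $p$ and the resulting $\lambda^-=1$ check out).
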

	\begin{proof}
		Note that $A_{1,1}$ is trivial and $\fq_{1,m}^{2^{m-1}}=\fq_{1,1}$.  We have $A_{1,m}=\langle \cl(\fq_{1,m})\rangle(2)$ is a quotient of $\Z/2^{m-1}\Z$. Since  $h_{1,m}\mid h_{1,m+1}$ by Lemma \ref{prop: norm_surj},  if $|A_{1,m}|<2^{m-1}$ for some $m$,  we must have  $|A_{1,k}|=|A_{1,k+1}|$ for some $k$.  Then  $|A_{1,n}|=|A_{1,k}|$ for any $n \ge k$ by Proposition \ref{prop:stable_theorem}.  But Kida's  formula \cite[Theorem 1]{Kid79}  shows that the $\lambda$-invariant of the cyclotomic $\Z_2$-extension of $\Q(\sqrt{-p})$ is $1$.
		In particular,
		the $2$-class numbers of $\Q(\sqrt{-p},\zeta_{2^{m+1}}+\zeta^{-1}_{2^{m+1}})$ are unbounded when $m\rightarrow \infty$. Thus the $2$-class numbers of $\Q(\sqrt{-p},\zeta_{2^{m+1}})=K_{1,m}$ are also unbounded by Lemma~\ref{prop: norm_surj}. We get a contradiction.
	\end{proof}

	\begin{proof}[Proof of Theorem \ref{thm:main1}(3)]
		Theorem \ref{thm:main1}(3) is just the combination of Theorem \ref{thm: 7mod16} and Theorem \ref{thm: 7mod16 cyc}.
	\end{proof}

	\subsection{Congruence property of the relative fundamental unit} We are now ready to prove Theorem \ref{thm: units}. 
	We assume $p\equiv 7\bmod{16}$ and use the same notations as in \S~\ref{sec:p716}. 

	To prove this theorem, we need an explicit reciprocity law for a real quadratic field $F$. We view $F\subset \R$ by fixing an embedding.
	For a prime ideal $\fp$ with odd norm and $\gamma\in \co_F$ prime to $\fp$, define the Legendre symbol  $\qleg{\gamma}{\fp} \in \{\pm 1\} $   by the congruence $\qleg{\gamma}{\fp}\equiv \gamma^{\frac{\bfN\fp-1}{2}} \bmod \fp$.
	For coprime $\gamma,\delta\in\co_F$ with $(2,\delta)=1$, define  $\qleg{\gamma}{\delta}:=\prod_{\fp\mid \delta}\qleg{\gamma}{\fp	}^{v_\fp(\delta)}$.  So by definition $\qleg{\gamma}{\delta}=1$ if $\delta \in \co_F^\times$. 
	
	For $\gamma,\delta\in\co_F \setminus\{0\}$, define
	\[\{\gamma,\delta\}=(-1)^{\frac{\sgn(\gamma)-1}{2}\cdot\frac{\sgn(\delta)-1}{2}}\]
	where $\sgn(x)=1$ if $x>0$ and $\sgn(x)=-1$ if $x<0$.
	Note that $ \{\gamma,\delta_1\}\{\gamma,\delta_2\}=\{\gamma,\delta_1\delta_2\}.$
	\begin{thm}\label{thm:quad_leg_rec}
		Assume that $\gamma_1,\delta_1,\gamma_2,\delta_2\in \co_F$ have odd norms, $\gamma_1$ and $\delta_1$ are coprime, $\gamma_2$ and $\delta_2$ are coprime, and $\gamma_1\equiv \gamma_2,\delta_1\equiv \delta_2\bmod 4$. Then
		\[\qleg{\gamma_1}{\delta_1}\qleg{\delta_1}{\gamma_1}\qleg{\gamma_2}{\delta_2}\qleg{\delta_2}{\gamma_2}=\{\gamma_1,\delta_1\}\{\gamma_1',\delta_1'\}\{\gamma_2,\delta_2\}\{\gamma_2',\delta_2'\}.\]
		where $\xi'$ is the conjugate of $\xi\in F$.
	\end{thm}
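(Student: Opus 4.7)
The plan is to apply the product formula of quadratic Hilbert symbols on $F$ to each pair $(\gamma_i, \delta_i)$ and then combine the two resulting identities using the congruence hypothesis.

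First, I would rewrite each $\qleg{\gamma_i}{\delta_i}\qleg{\delta_i}{\gamma_i}$ as a product of local Hilbert symbols at the odd primes of $F$. For coprime $\gamma, \delta \in \co_F$ of odd norm and an odd prime $\fp$ with $v_\fp(\delta) = b$ and $v_\fp(\gamma) = 0$, Proposition~\ref{prop: hil}(5) gives $\hilbert{\gamma}{\delta}{\fp}_2 = \qleg{\gamma}{\fp}^b$; the symmetric statement holds when $\fp \mid \gamma$, and the symbol is trivial when $\fp \nmid \gamma\delta$. Multiplying over all odd $\fp$ yields $\prod_{\fp \nmid 2\infty} \hilbert{\gamma_i}{\delta_i}{\fp}_2 = \qleg{\gamma_i}{\delta_i}\qleg{\delta_i}{\gamma_i}$. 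The product formula $\prod_v \hilbert{\gamma_i}{\delta_i}{v}_2 = 1$, together with the identification of the real-place contributions (the rule $\hilbert{a}{b}{\R}_2 = -1 \Leftrightarrow a, b < 0$ is exactly the definition of $\{\cdot, \cdot\}$, evaluated on the two real embeddings of $F$), then rearranges to
\[\qleg{\gamma_i}{\delta_i}\qleg{\delta_i}{\gamma_i} = \{\gamma_i, \delta_i\}\{\gamma_i', \delta_i'\} \cdot \prod_{\fP \mid 2} \hilbert{\gamma_i}{\delta_i}{\fP}_2.\]

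Multiplying the $i = 1$ and $i = 2$ versions, the infinite-place factors combine to reproduce the right-hand side of the theorem, so what remains is the 2-adic cancellation
\[\prod_{\fP \mid 2} \hilbert{\gamma_1}{\delta_1}{\fP}_2 \hilbert{\gamma_2}{\delta_2}{\fP}_2 = 1.\]

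This final step is the main obstacle. Because $\gamma_i, \delta_i$ have odd norms they lie in $\co_{F_\fP}^\times$ for every $\fP \mid 2$, so the cancellation follows once one shows that the restriction of the 2-adic Hilbert symbol to pairs of local units factors through $(\co_{F_\fP}/4\co_{F_\fP})^\times \times (\co_{F_\fP}/4\co_{F_\fP})^\times$; granting this, the congruences modulo $4\co_F$ force $\hilbert{\gamma_1}{\delta_1}{\fP}_2 = \hilbert{\gamma_2}{\delta_2}{\fP}_2$ at each $\fP$, so each factor in the product is a square and hence the product is $1$. I would verify this mod-$4$ factorization via the explicit formula for the quadratic Hilbert symbol in each completion $F_\fP$. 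In the intended application $F = \Q(\sqrt{p})$ with $p \equiv 7 \bmod 16$, the verification is especially clean: $2$ splits in $F$, so each $F_\fP = \Q_2$ and the formula $\hilbert{a}{b}{\Q_2}_2 = (-1)^{\frac{a-1}{2}\frac{b-1}{2}}$ on $\Z_2^\times \times \Z_2^\times$ manifestly depends only on $a, b$ modulo $4$.
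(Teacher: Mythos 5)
Your argument is correct, and it is essentially the argument behind the paper's own treatment: the paper gives no proof, deferring to Lemmermeyer's Lemmas 12.12, 12.13 and 12.16, which encode exactly the product-formula computation you carry out (odd places yield the Legendre symbols via the tame symbol, the two real places yield $\{\gamma,\delta\}\{\gamma',\delta'\}$, and the dyadic contribution is a congruence-class invariant). The one step you leave hedged --- that for a general dyadic completion $F_\fP$ the unit--unit quadratic Hilbert symbol factors through residues modulo $4\co_{F_\fP}$ --- is true and can be closed without case-by-case explicit formulas: if $u=1+4x$ with $x\in\co_{F_\fP}$, then setting $y=1+2z$ turns $y^2=u$ into $z^2+z=x$, whose reduction is separable, so $F_\fP(\sqrt{u})/F_\fP$ is unramified and every unit is a local norm; hence $\hilbert{v}{u}{F_\fP}_2=1$ for all units $v$, and by symmetry and bimultiplicativity the symbol on units depends only on the classes mod $4$. (For the paper's actual application, $F=\Q(\sqrt{p})$ with $p\equiv 7\bmod 16$, the prime $2$ splits and your explicit $\Q_2$ computation already suffices.) Note also that the hypothesis of odd norm is what guarantees $\gamma_i,\delta_i$ are units at every $\fP\mid 2$, which your reduction implicitly uses; it is worth stating.
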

	\begin{proof}
		This follows from \cite[Lemma~12.12, Lemma~12.13, Lemma~12.16]{Lem05} directly.
	\end{proof}
	
	\begin{proof}[Proof of Theorem~\ref{thm: units}]
		(1)    	Note that $E_{2,0}/E_{1,0}$ is an abelian group of rank $1$. We claim that $E_{2,0}/E_{1,0}$ is torsion-free. Otherwise, there exists $u\in E_{2,0}\setminus E_{1,0}$ such that $u^j\in E_{1,0}$ for some $j\geq 2$.  Then $K_{2,0}=K_{1,0}(u)$.   The norm of $u$ respect to the extension $K_{2,0}/K_{1,0}$ is $u \zeta u=\zeta u^2\in E_{1,0}$ for some $\zeta\in \zeta_j \cap K_{2,0}$.   So $\zeta=\pm 1$. Thus $u^2\in E_{1,0}$ and this implies that  $K_{2,0}/K_{1,0}$ is unramified at $p$. This contradicts to the fact that $K_{2,0}/K_{1,0}$ is ramified at $p$. This proves  the claim.

		Let $\eta\in E_{2,0}$ such that its image in $E_{2,0}/E_{1,0}$ is a generator of $E_{2,0}/E_{1,0}$. Then clearly $E_{2,0}=\langle \eta, \epsilon,-1 \rangle$.	By Lemma \ref{lem: index7mod16}, $\epsilon \in \bfN K^\times_{2,0}$.  Let $G=\Gal(K_{2,0}/K_{1,0})$.
		Since $A^G_{2,0}=\langle \fq_{2,0} \rangle$ and $\fq_{2,0}$ is a $G$-invariant fractional ideal,  by \cite[Proposition 1.3.4]{Gre},  $E_{1,0}\cap \bfN K^\times_{2,0}=\bfN E_{2,0}$ and in particular $\epsilon \in \bfN E_{2,0}$. Therefore we must have $\bfN(\pm\eta\epsilon^k)=\epsilon$. Replacing $\eta$ by $\sgn(\eta)\eta\epsilon^k$, then $\eta$ is totally positive since $\epsilon$ is totally positive, $\bfN(\eta)=\epsilon$ and $E_{2,0}=\langle \eta, \epsilon,-1 \rangle$.
		
		(2) We first reduce it to the case $\eta'=\eta$. Suppose the result holds for $\eta$. For any  $\eta'\in E_{2,0}$ such that $\bfN(\eta')=\epsilon$, we can write $\eta'=\sgn(\eta') \eta^k \epsilon^{s}$ with $k=1-2s$.   Firstly, one easily see that $\epsilon \equiv \pm 1 \bmod \sqrt{p}$.  We claim that $\epsilon \equiv 1 \bmod \sqrt{p}$.  Since $\epsilon=\bfN(\eta)=\eta \overline{\eta}$, we have $\epsilon \equiv \eta \overline{\eta} \equiv \eta^2  \bmod \sqrt[4]{p}$. Therefore, $\epsilon$ is a square in $\co_{2,0}/(\sqrt[4]{p})\cong \F_p$.  Because $-1$ is not a square in $\F_p$, we obtain $\epsilon \equiv 1 \bmod \sqrt{p}$. Then $\eta'\equiv \sgn(\eta') (-1)^k \equiv  -\sgn(\eta')  \bmod \sqrt[4]{p}$. Write $\eta=\alpha+\beta \sqrt[4]{p}$ with $\alpha,\beta \in \Z[\sqrt{p}]$. By the assumption we have $\fq \mid\mid \alpha$ and $\fq \nmid \beta$. It is easy to check that for odd $k$,  $\fq \mid\mid \alpha_k$ also where $\eta^k=\alpha_k+\beta_k\sqrt[4]{p}$ with $\alpha_k,\beta_k \in \Z[\sqrt{p}]$. Thus we have  $v_\fq (\Tr(\eta'))=v_\fq(2\epsilon^s \alpha_k)=v_\fq(2\epsilon^s \alpha)=3$.

		From now on we prove the result holds for $\eta=\alpha+\beta\sqrt[4]{p}$. Write $\alpha=a+b\sqrt{p}$ and $\beta=c+d\sqrt{p}$ with $a,b,c,d\in \Z$.
		Since the infinite place is ramified in $K_{2,0}$, we have $\bfN_{K_{2,0}/\Q}(\eta)=1$. Hence $\bfN_{K_{2,0}/\Q}(\eta)\equiv a^4 \equiv  1 \bmod \sqrt[4]{p}$. Since $p\equiv 7\bmod 16$, we have $\eta \equiv a\equiv \pm 1 \bmod \sqrt[4]{p}$.
		
		Let $G=\Gal(K_{3,0}/K_{2,0})$. By Proposition \ref{prop: gen by 2} and Theorem \ref{thm: 7mod16} tell us  $|A_{3,0}|=|A_{3,0}^{G}|=|A_{2,0}|=2$.  Applying Chevalley's formula \eqref{chevalley}  on $K_{3,0}/K_{2,0}$ gives $[E_{2,0}:\bfN K^\times_{3,0}\cap E_{2,0}]=4$. This implies
		$\left(\hilbert{\eta}{\sqrt[4]{p}}{\infty_2}, \hilbert{\eta}{\sqrt[4]{p}}{(\sqrt[4]{p})},\hilbert{\eta}{\sqrt[4]{p}}{\fq_{2,0}}\right)\neq (1,1,1)$. Therefore $\hilbert{\eta}{\sqrt[4]{p}}{(\sqrt[4]{p})}=\hilbert{\eta}{\sqrt[4]{p}}{\fq_{2,0}}=-1$ by  the  totally positivity of $\eta$  and the product formula. Hence $\eta $ is not a square modulo $\sqrt[4]{p}$ and we must have $\eta \equiv -1 \bmod \sqrt[4]{p}$.

		Write $\alpha=\pi^t\alpha_0$  with $\pi \nmid \alpha_0$, recall that $\pi$ is the totally positive generator of $\fq$ such that $\epsilon=\frac{\pi^2}{2}$.   Now $t=v_\fq(\Tr(\frac{\eta}{2}))= v_\fq(\Tr(\eta))-2$, so our goal is to prove  $t=1$.  Note that $\alpha$ and $\alpha_0$ are positive.
		Write $\epsilon=x+y\sqrt{p}$, $\pi=u+v\sqrt{p}$.
		By Lemma~\ref{lem: index7mod16}, $u$ and $v$ are both odd and $v\equiv \pm 1\bmod 8$. From $\epsilon=\frac{\pi^2}{2}$ and $\bfN(\pi)=u^2-pv^2=2$, we obtain $8\parallel x=u^2-1=pv^2+1$ and $y\equiv \pm 3\bmod 8$.

		If   $y\equiv 3\bmod8$, then  $\epsilon\equiv -\sqrt{p}\bmod 4$.
		Take $(\alpha_0,-\sqrt{p},\alpha_0,\epsilon)$ in Theorem~\ref{thm:quad_leg_rec}, since $\alpha_0>0, \sqrt{p}\epsilon'>0$, we have
		\[\qleg{\alpha_0}{-\sqrt{p}}\qleg{-\sqrt{p}}{\alpha_0}\qleg{\alpha_0}
		{\epsilon}\qleg{\epsilon}{\alpha_0}
		=\{\alpha_0,-\sqrt{p}\epsilon\}\{\alpha'_0,\sqrt{p}\epsilon'\}
		=1.\]
		Since $\alpha^2-\sqrt{p}\beta^2=\epsilon$, we have 
		\[ \qleg{\alpha^2-\sqrt{p}\beta^2}{\alpha_0}= \qleg{-\sqrt{p}}{\alpha_0}=\qleg{\epsilon}{\alpha_0}.\]
		
		By definition, $\qleg{\alpha_0}{\epsilon}=1$. Combine the above two equalities,  $\qleg{\alpha_0}{-\sqrt{p}}=1$. By  Lemma \ref{lem: index7mod16}, $\qleg{\pi}{-\sqrt{p}}=\hilbert{\pi}{\sqrt{p}}{\sqrt{p}}_2=-1$.
		Thus we have \[-1=\qleg{\alpha}{-\sqrt{p}}=\qleg{\pi}{-\sqrt{p}}^t\qleg{\alpha_0}{
			-\sqrt{p}}=(-1)^t,\]
		which means that  $t$ is odd in this case.

		If  $y\equiv -3 \bmod 8$, then $\epsilon^{-1}=x-y\sqrt{p}$ with  $-y\equiv 3\bmod 8$ and  $\bfN({\eta^{-1}})=\epsilon^{-1}$.
		Repeating the above argument, we obtain  $v_\fq(\Tr(\frac{\eta^{-1}}{2})$ is odd.  Let $\bar{\eta}=\alpha-\beta \sqrt[4]{p}$.  We have $\Tr({\eta^{-1}})=\Tr(\bar{\eta}{\epsilon^{-1}})={\epsilon^{-1}} \Tr(\overline{\eta})={\epsilon^{-1}} \Tr(\eta)$.  Therefore $t=v_\fq(\frac{\Tr(\eta)}{2})=v_\fq(\frac{\Tr(\eta^{-1})}{2})+v_\fq(\epsilon^{-1})=v_\fq(\frac{\Tr(\eta^{-1})}{2})$ is also odd.
		
		Finally let us  prove  $t=1$.  
		Recall that  $\eta=a+b\sqrt{p}+(c+d\sqrt{p})\sqrt[4]{p}$ with $a,b,c,d\in \Z$.  Since $t$ is odd, we have $\pi \mid a+b\sqrt{p}$ and $\pi\nmid c+d\sqrt{p}$.  Then $c\not \equiv d \bmod 2$. From $\bfN(\eta)=\epsilon=x+y\sqrt{p}$ we have
		$a^2+pb^2-2cdp=x$.
		Assume   $t\ge 3$,  i.e. $2\pi \mid a+b\sqrt{p}$.  We must have $2\parallel a$ and $2\parallel b$ or $4\mid a$ and $4\mid b$. In both cases, $x\equiv -2cd p \bmod 8$. 
		Since $8\mid x$, we have $4\mid cd$. But exactly one of $c$ and $d$ is odd, $y=2ab-c^2-pd^2\equiv d^2-c^2\equiv \pm 1\bmod 8$, which is a contradiction to $y\equiv \pm 3\bmod 8$. Thus $t=1$.
	\end{proof}


	%
	%

	
	\bibliographystyle{alpha}

\end{document}